\colorlet{cite}{LimeGreen!50!Green}
\tikzset{ 
  baseline=-2.3pt,
  text height=1.5ex, text depth=0.25ex,
  >=stealth,
  node distance=2cm,
  mid/.style={fill=white,inner sep=2.5pt},
}
\newtheoremstyle{mydef}
  {}		
  {}		
  {}		
  {}		
  {\scshape}	
  {. }		
  { }		
  {\thmname{#1}\thmnumber{ #2}\thmnote{ #3}}	
\theoremstyle{plain}	
\newtheorem{theorem}{Theorem} 
\newtheorem{lemma}[theorem]{Lemma} 
\newtheorem{proposition}[theorem]{Proposition}
\newtheorem*{theorem*}{Theorem}
\newtheorem{corollary}[theorem]{Corollary}
\theoremstyle{mydef} 
\newtheorem{definition}[theorem]{Definition}
\newtheorem*{conjecture*}{Conjecture}
\theoremstyle{remark}
\newtheorem{remark}[theorem]{Remark}
\newtheorem{notation}[theorem]{Notation}
\DeclareMathOperator{\Ad}{Ad}
\DeclareMathOperator{\grad}{grad}
\DeclareMathOperator{\ham }{ham}
\newtheorem*{proposition*}{Proposition}
\newtheorem*{lemma*}{Lemma}
\newtheorem*{corollary*}{Corollary}
\theoremstyle{definition}
\theoremstyle{remark}
\newcommand{\ce}{\mathrel{\mathop:}=}
\author{Elizabeth Gasparim {\tiny and}  Luiz A. B. San Martin}
\address{E. G. - Depto. Matem\'aticas, Univ. Cat\'olica del Norte, Antofagasta, Chile. \newline
L.  SM. - Imecc -
Unicamp, Depto. de Matem\'{a}tica, Campinas, Brasil.\newline
 etgasparim@gmail.com,  smartin@ime.unicamp.br}
\title{Morse functions and Real Lagrangian Thimbles on Adjoint Orbits}
\begin{document}
\maketitle

\begin{abstract}
We compare Lagrangian thimbles  for the potential of a  Landau--Ginzburg model  to the 
Morse theory of its real part. We  explore 
Landau--Ginzburg models defined using Lie theory, constructing their  real Lagrangian thimbles
explicitly and comparing them to the stable and unstable manifolds of the real gradient flow.
 \end{abstract}

\tableofcontents

\section{Real and complex Morse functions}

Given a  real manifold $M$, a smooth function  $f\colon M \rightarrow \mathbb R$ is called a {\it Morse function} if it 
has only nondegenerate critical points. Recall that a critical point $p$ of $f$ is nondegenerate if the Hessian matrix
$\frac{\partial^2f}{\partial x_i\partial x_j}(p)$ is nonsingular. 
Nondegenerate critical points are isolated, and  the lemma of Morse tells us that, on 
a neighborhood of such a critical point, the Morse function can be written in local coordinates as 
$$f=f(p)-x_1^2-\cdots -x_\lambda^2+x_{\lambda+1}^2+ \cdots +x_n^2 .$$
The integer $\lambda$ is called the index of $f$ at $p$.
Morse theory  tells us how to recover the topology of a compact manifold $M$ from the data of  indices of  critical points of $f$. 
In fact, $M$ has the homotopy type of a finite CW-complex with one cell of dimension $\lambda$ for each critical point of index $\lambda$, 
see \cite{M}.

Given a complex  manifold $M$, a complex function $f\colon M \rightarrow \mathbb C$ is called a
 {\it Landau--Ginzburg model} and the function $f$ is called the {\it superpotential}. If in addition $df$ is surjective outside a finite number of points 
 and $f$ is a holomorphic Morse function, that is, it has only 
 nondenegerate critical points, then $f$ is called a {\it Topological Lefschetz fibration}. 
On a neighborhood of  a critical point, a Lefschetz fibration  can be written in local coordinates as 
$$f=f(p)+z_1^2+ \cdots +z_n^2 .$$

Note that changing the coordinate $z_j$ to $iz_j$ takes $z_j^2$ to $-z_j^2$ so that 
it is meaningless to talk about indices of critical points in the complex case. 
Furthermore, observe that if the complex manifold $M$ is compact, then any holomorphic function $f\colon M \rightarrow \mathbb C$ 
is constant. Hence, this type of Landau--Ginzburg model is interesting only in the 
case when $M$ is noncompact. For compact manifolds the natural concept of Landau--Ginzburg model  is
$f\colon M \rightarrow \mathbb P^1$. 

In algebraic geometry functions $f\colon M \rightarrow \mathbb P^1$ give rise to so-called pencils, 
which are studied extensively within the context of 
Picard--Lefschetz theory. Fibers of such pencils intersect in the base locus, and 
a topological Lefschetz fibration can be obtained  by blowing up this base locus. 
If $M_b$ is a regular fibre contained in a small neighborhood of a singular fibre $M_o$
then there is a retraction $M_b \rightarrow M_o$ which induces a surjection in homology
$H_*(M_b) \rightarrow H_*(M_o)$. The classes in the kernel of this surjection are called
{\it vanishing cycles} and  are 
important objects of study in Hodge theory, see \cite{PS}. The fundamental theorem of Picard--Lefschetz theory describes the
intersection theory of vanishing cycles in the case when $M$ is a projective variety.
 In particular, for  compact $M$ this fundamental theorem implies that  each critical point of $f$ has a 
corresponding  vanishing cycle. However, in the noncompact case  existence of a vanishing cycle 
corresponding to each critical point is not guaranteed, see \cite{S}.

If $(M, \omega)$ is a symplectic manifold, then a topological Lefschetz fibration $f\colon M \rightarrow \mathbb C$
 is called a {\it Symplectic Lefschetz fibration} provided
the symplectic form  $\omega$ is nondegenerate on the fibre $M_x$
for all $x$ in the sense that:
\begin{itemize}
\item[$\iota.$]   $M_{x}$ is a symplectic submanifold of  $M$ for each regular value $x$, and 
\item[$\iota\iota.$]  for each  critical point $p$ the symplectic form $\omega_{p}$
is non degenerate over the tangent cone of
$M_{f(p)}$ at $p$.
\end{itemize}

For any symplectic fibration there exists a natural connection obtained by taking the symplectic orthogonal 
to the fibre. If $o$ is a critical value of $f$ and $b$ is a regular value contained in a neighborhood of $o$, 
then consider a path $\lambda\colon [0,1] \rightarrow \mathbb C $  from $\lambda(0)=b$ to $\lambda(1)=o$. 
Given a vanishing cycle $\alpha \subset M_b$ we can 
use the connection to  parallel transport the cycle $\alpha$ along $\lambda$ 
all the way to the corresponding critical point $p$. For each $t \in [0,1]$ we obtain a cycle $\alpha_t \subset M_{\lambda(t)}$ 
so that $\alpha_0 = \alpha$ and $\alpha_1 = p$. The object traced by the cycle $\alpha$ on its way to $p$ is topologically a 
closed disc $D = \{\cup_{t\in [0,1]}\alpha_t\}$ with boundary $\partial D = \alpha$  and is called a {\it thimble}.

Vanishing cycles live naturally in the middle homology of the regular fibre, hence $\dim_{\mathbb R} \alpha = \dim_{\mathbb C} M_b$.
Thus, it makes sense ask whether $\alpha$ is a Lagrangian submanifold of $M_b$, that is, if $\omega$ vanishes on $\alpha$, 
and in the affirmative case
$\alpha$ is  called a {\it Lagrangian vanishing cycle}. If the corresponding thimble is  a Lagrangian submanifold 
of $M$ it is then called a {\it Lagrangian thimble}. Lagrangian thimbles are the objects that generate the so-called 
{\it Fukaya--Seidel category} of the fibration, and they are our main objects of study in this paper. 

We explore symplectic Lefschetz fibrations on semisimple adjoint orbits, recalling the construction 
of the complex superpotential  in section \ref{SLF} and describing the gradient vector field of 
its real part  in section \ref{RGR}. We then construct Lagrangian vanishing cycles in section \ref{LVC} 
and Lagrangian thimbles of a preferred type which we name {\it real Lagrangian thimbles} (definition \ref{rlt}) 
obtained using the Morse theory of the real part of the superpotential.

Profiting from  the knowledge  of  Lagrangian submanifolds of the adjoint orbits described in \cite{GGSM2} 
and existence of Lagrangian submanifolds inside their compactifications described in \cite{GSMV} we 
have existence of  Lagrangian submanifolds $V$ passing through any critical value $c$
of the superpotential $f_H= f_1+if_2$ and containing a real sphere 
that is a vanishing cycle for $f_1$ constructed in section \ref{LVC}.
Then, considering the restriction of the real part $g_1= f_1\vert_V$ to the Lagrangian submanifold $V$ of $\mathcal O(H_0)$
we are able to find out explicitly the desired real thimbles:

\begin{theorem*}[\ref{stable/unstable}]
	Take $c$ 
	near $f_{1}\left( x\right) =g_{1}\left( x\right) $. We have that
	\begin{equation*}
	g_{1}^{-1}\left[ c,g_{1}\left( x\right) \right] =f_{1}^{-1}\left[
	c,f_{1}\left( x\right) \right] \cap V \quad \text{in the negative definite case, or } 
	\end{equation*}%
	  \begin{equation*}g_{1}^{-1}\left[ g_{1}\left( x\right) ,c%
	\right] =f_{1}^{-1}\left[ f_{1}\left( x\right) ,c\right] \cap V \quad \text{in the 
	positive definite case}\phantom{xxx} \end{equation*}
	 is homeomorphic to a closed ball in  $\mathbb{R}^{\dim
		V}.$ This ball is a  Lagrangian thimble.
\end{theorem*}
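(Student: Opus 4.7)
The plan is to observe first that the set equality $g_{1}^{-1}[c,g_{1}(x)] = f_{1}^{-1}[c,f_{1}(x)] \cap V$ is essentially tautological: since $g_{1} = f_{1}|_{V}$, a point $y \in V$ satisfies $g_{1}(y) \in [c,g_{1}(x)]$ if and only if $y \in V$ and $f_{1}(y) \in [c,f_{1}(x)]$. The same remark applies to the positive definite case. So the substance of the theorem is to identify this set as a closed ball of dimension $\dim V$ and as a Lagrangian thimble.

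To establish the ball structure I would apply the Morse lemma to $g_{1}$ on $V$ at the point $x$. This requires two preliminary checks. First, that $x$ is a critical point of $g_{1}$, which follows from $x$ being a critical point of $f_{1}$ on the ambient orbit $\mathcal{O}(H_{0})$, together with $V$ being a submanifold through $x$ tangent to eigendirections of the Hessian. Second, that the Hessian of $g_{1}$ at $x$ is definite on $T_{x}V$; this is exactly what the phrase ``negative (resp.\ positive) definite case'' encodes. Granting both, the Morse lemma supplies coordinates $(y_{1},\dots,y_{n})$ on $V$ near $x$, with $n=\dim V$, in which $g_{1} = g_{1}(x) \mp (y_{1}^{2}+\cdots+y_{n}^{2})$. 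For $c$ close enough to $g_{1}(x)$ the sublevel (resp.\ superlevel) set in a Morse neighborhood is cut out by $\sum y_{i}^{2} \leq |g_{1}(x)-c|$, a closed Euclidean ball of dimension $n$. A standard argument using the gradient flow of $g_{1}$ on $V$ retracts the full global level set onto this local ball, provided $c$ lies close enough to $g_{1}(x)$ that no other critical value is crossed.

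The Lagrangian property is then automatic, as the ball is a middle-dimensional subset of the Lagrangian submanifold $V$ produced in section~\ref{LVC} (using \cite{GGSM2,GSMV}). To identify the ball as a thimble in the sense of Picard--Lefschetz, I would compare its boundary sphere with the Lagrangian vanishing cycle $\alpha$ constructed in section~\ref{LVC}, and argue that the real gradient trajectories of $g_{1}$ flowing from $\alpha$ to $x$ sweep out exactly this ball.

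The main obstacle is the last point: checking that the real Morse-theoretic ball really coincides with a thimble in the symplectic Lefschetz sense, namely a disc traced by symplectic parallel transport of the vanishing cycle along a path from $c$ to $f_{H}(x)$. The bridge should come from the fact that on a Lagrangian $V$ the symplectic orthogonal to the fibres of $f_{H}$, restricted to $V$, agrees with the Riemannian gradient direction of $g_{1}$ for a compatible metric, so that horizontal lifts of the real segment $[c,f_{1}(x)]$ coincide with integral curves of $\grad g_{1}$ on $V$. Making this identification precise, and verifying that the horizontal transport of $\alpha$ along the real segment fills out the entire Morse ball rather than some proper subset of it, is the step that closes the argument.
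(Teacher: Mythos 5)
Your overall strategy matches the paper's: identify the set equality as tautological, recognize the ball as swept out by gradient trajectories of $g_1$, and close the argument by showing symplectic parallel transport along the real segment coincides with the gradient flow of $g_1$ on $V$. However, you explicitly flag the last identification as an unresolved obstacle, and that identification is exactly the content of the paper's proof, so the proposal as written has a genuine gap precisely where the argument needs to close.

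The missing computation is short but essential. The symplectic orthogonal to a fibre $\Phi_z$ is spanned by $F_1(z)$ and $JF_1(z)$, so a horizontal lift $W$ of $d/dt$ has the form $W = aF_1(z) + bJF_1(z)$. Because the base path $[c,g_1(x)]$ lies in $\mathbb{R}$, the condition $df_z(W) = d/dt$ forces $df_z(W)$ to be real, i.e.\ $(df_2)_z(W) = 0$. Since $F_2 = -JF_1$ and $M(JF_1, F_1) = 0$, this gives $0 = M(F_2, W) = -bM(JF_1,JF_1)$, hence $b = 0$. So $W$ is a (positive, in the negative definite case) multiple of $F_1$. Combined with Proposition~\ref{fg} — on $V$ one has $G_2 = 0$ (since $f_2$ is constant there) and therefore $F_1 = G_1$ — this shows parallel transport of $g_1^{-1}\{c\}$ flows exactly along the gradient trajectories of $g_1$ in $V$, which converge to $x$, filling out the ball. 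Note also that the Morse-lemma detour you propose is not needed: the ball structure comes directly from this flow argument, and worrying about retracting a ``global'' level set is moot since $c$ is already taken near $g_1(x)$ so that no other critical value intervenes.
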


 We provide examples that  illustrate the behaviour of the superpotential 
  over Lagrangian submanifolds obtained from graphs in section \ref{PG}. 
  Finally, exploring the graph of $\Gamma(R_{w_{0}})$ of the right translation by the principal involution of the Weyl group,
  we explicitly describe examples of the relation between the Morse theory of the real part 
and the real Lagrangian thimbles of the superpotential, concluding this work with:

\begin{theorem*}[\ref{Morse}]
	The stable and unstable manifolds of  $\grad \left( 
	\text{Re}f_{H}\right) $ at the critical point  $\left[ e_{j}\right] $ are
	open in  the graph $\Gamma( m_{j}^{\pm }\circ R_{w_{0}}) $. The 
	real  Lagrangian thimbles are closed balls contained in  the graph $\Gamma
	( m_{j}^{\pm }\circ R_{w_{0}}) $.
\end{theorem*}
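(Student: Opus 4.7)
The plan is to realize $\Gamma(m_j^\pm \circ R_{w_0})$ as an instance of the Lagrangian submanifold $V$ that appears in Theorem~\ref{stable/unstable}, and then deduce both assertions from the Morse-theoretic description of the restricted function $g_1 = \text{Re} f_H|_V$ at $[e_j]$.

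First I would verify that $\Gamma(m_j^\pm \circ R_{w_0})$ is a Lagrangian submanifold of the adjoint orbit $\mathcal O(H_0)$ passing through $[e_j]$. The Lagrangian property is an instance of the graph construction discussed in section~\ref{PG}, while the presence of $[e_j]$ on the graph should follow from the combinatorial role of the principal involution $w_0$ on the Weyl-group-indexed critical set. I would then match the signs: $m_j^+$ should correspond to the case in which $[e_j]$ is a local minimum of $g_1$ (positive definite Hessian), and $m_j^-$ to the case of a local maximum (negative definite Hessian), so that the two branches of Theorem~\ref{stable/unstable} apply directly.

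With this identification the second claim of the theorem is immediate: Theorem~\ref{stable/unstable} produces, for $c$ sufficiently close to $g_1([e_j])$, a closed ball inside $V = \Gamma(m_j^\pm \circ R_{w_0})$ which is a real Lagrangian thimble, so the thimbles are indeed closed balls contained in the graph.

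For the first claim I would argue as follows. Because the Hessian of $g_1$ is definite at $[e_j]$, this critical point is a sink (respectively source) of the intrinsic gradient flow on the graph, so a whole neighborhood of $[e_j]$ in $\Gamma(m_j^\pm \circ R_{w_0})$ lies in the stable (respectively unstable) set for $g_1$. To pass from this to the ambient gradient flow of $\text{Re} f_H$, I would use that the graph is Lagrangian together with the local description of $\grad(\text{Re} f_H)$ recalled in section~\ref{RGR}: on a Morse chart around $[e_j]$, the ambient gradient splits according to index, and the graph meets the chart as the sum of exactly those coordinate directions along which $\text{Re} f_H$ is definite with the appropriate sign. Integrating, the ambient stable (respectively unstable) manifold meets $\Gamma(m_j^\pm \circ R_{w_0})$ in an open neighborhood of $[e_j]$, and combining this local openness with global convergence of gradient trajectories gives the full statement. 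The main obstacle will be the final tangency computation at $[e_j]$: one must show that $T_{[e_j]} \Gamma(m_j^\pm \circ R_{w_0})$ coincides with the sum of eigenspaces of the Hessian of $\text{Re} f_H$ of the correct sign, which requires an explicit differentiation of $m_j^\pm \circ R_{w_0}$ at the fixed point and a comparison with the Killing-form data controlling the Hessian of the superpotential on the orbit.
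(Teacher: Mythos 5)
There is a genuine gap: you never verify the hypothesis, required by Proposition~\ref{proplagrandest} (and therefore by Theorem~\ref{stable/unstable}, which is stated ``in the setup of Proposition~\ref{proplagrandest}''), that the imaginary part $f_2=\mathrm{Im}\,f_H$ is constant on the Lagrangian $V=\Gamma(m_j^{\pm}\circ R_{w_0})$. This is not a formality. It is precisely the condition that forces $\grad f_1$ to be tangent to $V$ (via the Corollary after Proposition~\ref{fg}, since $G_2=0$ gives $F_1=G_1$ on $V$), which is what lets one identify the intrinsic gradient flow on the graph with the restriction of the ambient flow. The paper devotes most of Section~\ref{MSO} (from the calculation of $\Phi([u],m[u]^{\bot})$ in the basis $\beta$ through the lemma on expression~(\ref{forsomdevesereal1}) and the proposition that follows it) to showing that $f_H$ takes real values on $\Gamma(m_j^{\pm}\circ R_{w_0})$, exactly to meet this hypothesis.

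Your substitute argument for the first claim --- match $T_{[e_j]}\Gamma(m_j^{\pm}\circ R_{w_0})$ with the definite-sign eigenspace of $\mathrm{Hess}(\mathrm{Re}\,f_H)$ in a Morse chart and integrate --- does not close the gap. Two half-dimensional submanifolds of $\mathcal{O}(H_0)$ can share a tangent space at one point without coinciding on any neighborhood of it, so equality of tangent spaces at $[e_j]$ does not give that a neighborhood of $[e_j]$ in the graph lies in the ambient stable (or unstable) manifold. What actually yields local coincidence is invariance of $V$ under the flow of $\grad f_1$, and that is exactly what the constancy of $f_2$ on $V$ provides. With that ingredient added, your identification of $m_j^{+}$ with the positive-definite case and $m_j^{-}$ with the negative-definite case (the content of the paper's Proposition preceding the theorem, via Proposition~\ref{prophess}) and the appeal to Theorem~\ref{stable/unstable} do complete the proof in the same way the paper does.
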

 
 The relation between real thimbles in symplectic Lefschetz  fibrations and stable and unstable manifolds 
 of the gradient flow is part of the folklore of the subject and is presumably well known to experts. Nevertheless, 
 we were unable to find such relation explained  in detail anywhere
 in the literature, and we believe that the explicit constructions 
 given here are useful illustrations of the construction of Lagrangians.

\section{Symplectic Lefschetz fibrations}\label{SLF}

In this section we summarize the construction of symplectic Lefschetz fibrations on adjoint orbits, 
their compactifications and Lagrangian submanifolds discussed in \cite{GGSM1,GGSM2,BGGSM}.

Let $G$ be a complex semisimple Lie group with Lie algebra $\mathfrak{g}$ and denote by $\langle X,Y\rangle:=\textnormal{tr}(\textnormal{ad}(X),\textnormal{ad}(Y))$ the Cartan--Killing form of $\mathfrak{g}$. Fix a Cartan subalgebra $\mathfrak{h}\subset \mathfrak{g}$ and a real compact form $\mathfrak{u}$ of $\mathfrak{g}$. Associated to these subalgebras are the subgroups $T=\langle \exp \mathfrak{h}\rangle=\exp \mathfrak{h}$  and $U=\langle \exp \mathfrak{u}\rangle=\exp \mathfrak{u}$. Denote by $\tau$ the conjugation associated to $\mathfrak{u}$ which is defined by $\tau(X)=X$ if $X\in\mathfrak{u}$ and $\tau(Y)=-Y$ if $Y\in i\mathfrak{u}$, that is, if $Z=X+iY\in \mathfrak{g}$ with $X,Y\in \mathfrak{u}$ then $\tau(X+iY)=X-iY$. In this case we can define the Hermitian form $\mathcal{H}_{\tau }\colon \mathfrak{g}\times \mathfrak{g}\rightarrow \mathbb{C}$ as
\begin{equation}
\mathcal{H}_{\tau}(X,Y):=-\langle X,\tau Y\rangle.
\end{equation}
If we write the real and imaginary parts of $\mathcal{H}_{\tau}$ as
$$\mathcal{H}_{\tau}(X,Y)=(X,Y)+i\Omega(X,Y),$$
it is well known that the real part $(\cdot, \cdot)$ is an inner product and the imaginary part $\Omega$ is a symplectic form on $\mathfrak{g}$. Indeed, we have
\begin{equation*}
0\neq i\mathcal{H}\left( X,X\right) =\mathcal{H}\left( iX,X\right) =i\Omega
\left( iX,X\right) , 
\end{equation*}%
that is, $\Omega \left( iX,X\right) \neq 0$ for all $X\in \mathfrak{g}$,
which shows that $\Omega $ is nondegenerate. Moreover, $d\Omega =0$ because $%
\Omega $ is a constant bilinear form. The fact that $\Omega \left( iX,X\right) \neq 0$ for all $X\in \mathfrak{g} $
guarantees that the restriction of $\Omega $ to any complex subspace of $%
\mathfrak{g}$ is also nondegenerate.\\

We denote by $\mathcal{O}\left( H_{0}\right)$ the adjoint orbit $\Ad G (H_0)$ of $H_0\in \mathfrak{g}$.
Denote by $\mathfrak{h}^\ast$ the dual vector space of $\mathfrak{h}$ and by $\Pi$ the set of all roots associated to the Cartan subalgebra $\mathfrak{h}$. An element $H\in \mathfrak{h}$ is called regular if $\alpha(H)\neq 0$ for all $\alpha\in \Pi$. As the restriction of Cartan--Killing form to $\mathfrak{h}$ is nondegenerate, the map $\varphi\colon \mathfrak{h}\to \mathfrak{h}^\ast$ defined by $\varphi(X)=\langle X,\cdot \rangle$ is a linear isomorphism. We denote by $\mathfrak{h}_\mathbb{R}$  the real subspace of $\mathfrak{h}$ generated by  $\varphi^{-1}(\Pi)$.
The pullback of the symplectic form $\Omega$ by
the inclusion $\mathcal{O}\left( H_{0}\right) \hookrightarrow \mathfrak{g}$
defines a symplectic form on $\mathcal{O}\left( H_{0}\right) $.
With this choice of symplectic form, we have our construction of Symplectic Lefschetz fibrations via Lie theory as follows:
\begin{theorem}\label{lefsch} \cite[Thm. 2.2]{GGSM1} 
Let $\mathfrak{h}$ be the
	Cartan subalgebra of a complex semisimple Lie algebra $\mathfrak{g}$. Given $H_{0}\in 
	\mathfrak{h}$ and $H\in \mathfrak{h}_{\mathbb{R}}$ with $H$ a regular
	element. The \textit{height function} \ $f_{H}\colon\mathcal{O}\left(
	H_{0}\right) \rightarrow \mathbb{C}$ defined by 
	\begin{equation*}
	f_{H}\left( x\right) =\langle H,x\rangle \qquad x\in \mathcal{O}\left(
	H_{0}\right) 
	\end{equation*}%
	has a finite number (= $|\mathcal{W}|/|\mathcal{W}_{H_{0}}|$) of isolated
	singularities and gives $\mathcal{O}\left(H_{0}\right) $ the structure of a
	symplectic Lefschetz fibration. 
\end{theorem}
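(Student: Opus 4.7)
The plan is to verify, in order, the three properties defining a symplectic Lefschetz fibration for $f_H$: a finite critical set of the stated cardinality, nondegeneracy of the holomorphic Hessian at each critical point, and compatibility of $\Omega$ with the fibers.

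First I would locate the critical points. Since $T_x \mathcal{O}(H_0) = [\mathfrak{g}, x]$, invariance of the Cartan--Killing form gives
\[
df_H|_x([Y,x]) = \langle H, [Y,x]\rangle = \langle [H,x], Y\rangle,
\]
so $x$ is critical iff $[H,x]=0$. Regularity of $H \in \mathfrak{h}_{\mathbb{R}}$ forces the centralizer of $H$ in $\mathfrak{g}$ to be $\mathfrak{h}$, hence the critical set equals $\mathcal{O}(H_0) \cap \mathfrak{h}$, which by a classical Lie-theoretic fact coincides with the Weyl orbit $\mathcal{W}\cdot H_0$ of cardinality $|\mathcal{W}|/|\mathcal{W}_{H_0}|$.

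Second, I would compute the holomorphic Hessian at a critical point $p$ using the root space decomposition $\mathfrak{g} = \mathfrak{h} \oplus \bigoplus_\alpha \mathfrak{g}_\alpha$. From $[X_\alpha, p] = -\alpha(p)X_\alpha$ one obtains $T_p\mathcal{O}(H_0) = \bigoplus_{\alpha(p)\neq 0} \mathfrak{g}_\alpha$, and a direct Jacobi-plus-invariance calculation yields
\[
d^2 f_H|_p([X_\alpha,p],[X_\beta,p]) = \langle [[H, X_\alpha], X_\beta],\, p\rangle = \alpha(H)\,\alpha(p)\,\langle X_\alpha, X_{-\alpha}\rangle\,\delta_{\beta,-\alpha},
\]
the Kronecker pattern coming from the Killing form pairing $\mathfrak{g}_\gamma$ with $\mathfrak{g}_{-\gamma}$ and annihilating $\mathfrak{h}$. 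Since $\alpha(H)\neq 0$ by regularity of $H$ and $\alpha(p)\neq 0$ on every root space appearing, the Hessian pairs $\mathfrak{g}_\alpha$ with $\mathfrak{g}_{-\alpha}$ nondegenerately; diagonalizing via $X_\alpha \pm X_{-\alpha}$ and rescaling produces the holomorphic Morse normal form $f_H(p)+z_1^2+\cdots+z_n^2$.

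Finally, the symplectic conditions on fibers are essentially free, given the observation $\Omega(iX,X) = (X,X) > 0$ for $X\neq 0$ already recorded in the excerpt: this forces $\Omega$ to restrict nondegenerately to \emph{every} complex subspace of $\mathfrak{g}$. Regular fibers are complex submanifolds of $\mathcal{O}(H_0)$, verifying (i); and the tangent cone at a critical point, being the zero locus of a nondegenerate complex quadratic form, has complex tangent subspaces at every smooth point, verifying (ii). The main obstacle is the Hessian step: one must choose root-space coordinates carefully so that the only surviving contribution is the diagonal $(\alpha,-\alpha)$ pairing, after which nondegeneracy and the local Lefschetz model are simultaneously visible.
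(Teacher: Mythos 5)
Your argument is correct and matches the approach used in \cite{GGSM1} and reflected in the related computations in this paper: the critical locus is identified as $\mathcal{O}(H_0)\cap\mathfrak{h}=\mathcal{W}\cdot H_0$ via the centralizer of the regular element $H$, the holomorphic Hessian at $p=wH_0$ is computed to pair $\mathfrak{g}_\alpha$ with $\mathfrak{g}_{-\alpha}$ through $\alpha(H)\alpha(p)$ (cf.\ formula (\ref{forhesswhzero}) and the eigenvalue computation of $dZ_x$ in Section \ref{RGR}), and the symplectic conditions on fibers follow from the observation that $\Omega$ is nondegenerate on every complex subspace of $\mathfrak{g}$. Nothing essential is missing.
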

Here $\mathcal{W}=\textnormal{Nor}_G(\mathfrak{h})/\textnormal{Cen}_G(\mathfrak{h})$ denotes the Weyl group.
In the language used in Homological Mirror Symmetry the pair $(\mathcal O(H_0), f_H)$ is called a
{ Landau--Ginzburg model} with {superpotential}  $f_H$. See \cite{BBGGSM} for a discussion of 
the mirror of $\mathcal O(H_0)$ in the case of $\mathfrak{sl}(2,\mathbb C)$. 

Given a regular element $H_0 \in \mathfrak g$, consider the set $\Theta$ of 
simple roots that have $H_0$ in their kernel. Let  $\mathfrak{p}_{\Theta }$ be 
the  parabolic subalgebra determined by $\Theta$, with corresponding parabolic  subgroup $P_{\Theta }$.
The quotient  $F_\Theta :=G/P_\Theta$ by the parabolic subgroup  is the flag manifold 
determined by $H_0$. Another regular element in $\mathfrak g$ will correspond to the same 
flag manifold if it is annihilated by the same set of roots $\Theta$, so for questions 
regarding the isomorphism with $T^* F_\Theta$ we denote the regular element  by 
$H_\Theta$ instead of $H_0$.\\

The adjoint orbit of a regular element $H_\Theta$ is
isomorphic to the cotangent bundle of the flag manifold $F_\Theta$  \cite[Thm.\thinspace 2.1]{GGSM2}. The
isomorphism $\iota\colon \mathcal{O}\left( H_{\Theta}\right)
\rightarrow T^{\ast }\mathbb{F}_{\Theta }$ is obtained observing that 
$$\mathcal{O}\left(
H_{\Theta }\right) =\bigcup_{k\in K}\mathrm{Ad}\left( k\right) \left(
H_{\Theta }+\mathfrak{n}_{\Theta }^{+}\right), $$ then taking for each $X\in 
\mathfrak{n}_{\Theta }^{+}$, the correspondence: 
\begin{equation*}
\mathrm{Ad}\left( k\right) \left( H_{\Theta }+X\right) \mapsto \langle 
\mathrm{Ad}\left( k\right) X,\cdot\rangle
\end{equation*}
where $\mathrm{Ad}\left( k\right) \mathfrak{n}_{\Theta }^{-}$ is identified
with the tangent space $T_{kb_{\Theta }}\mathbb{F}_{\Theta }$, where 
$b_\Theta$ is the origin of the flag.

Let $\mu $ be the moment map of the action $a\colon G\times T^{\ast }\mathbb{%
	F}_{\Theta }\rightarrow T^{\ast }\mathbb{F}_{\Theta }$. Then $\mu
\colon T^{\ast }\mathbb{F}_{\Theta }\rightarrow \mathrm{Ad}\left ( G\right)
H_{\Theta }$ is the inverse of the map $\iota \colon \mathrm{Ad}\left(
G\right) H_{\Theta }\rightarrow T^{\ast }\mathbb{F}_{\Theta }$, and satisfies 
\begin{equation*}
\mu ^{\ast }\omega =\Omega,
\end{equation*}
where $\Omega $ is the canonical symplectic form of $T^{\ast }\mathbb{F}%
_{\Theta }$ and $\omega $ the (real) Kirillov--Kostant--Souriau form on $%
\mathrm{Ad}\left( G\right) H_{\Theta }$.

We  compactify the total space of $T^{\ast }\mathbb{F}_{\Theta }$ to the
trivial product $F_\Theta\times F_{\Theta^\ast}$ as: 
\begin{equation*}
\mathcal{O}\left( H_{\Theta}\right) \overset{\sim}{ \rightarrow} T^{\ast }%
\mathbb{F}_{\Theta } \hookrightarrow \overline{T^{\ast }\mathbb{F}_{\Theta }}%
= F_\Theta\times F_{\Theta^\ast}.
\end{equation*}

\cite[Thm.\thinspace 5.3]{BGGSM} showed how to extend the potential $f_H$ 
to the compactification in the case of minimal adjoint orbits. 

Let $w_{0}$ be the principal involution of the Weyl group $\mathcal{W}$,
that is, the element of highest length as a product of simple roots. 
The right action $R_{w_{0}}\colon \mathbb{F}_{H_{0}}\rightarrow 
\mathbb{F}_{H_{0}^{\ast }}$ is anti-symplectic with
respect to the K\"{a}hler forms on $\mathbb{F}_{H_{0}}$ and $\mathbb{F}%
_{H_{0}^{\ast }}$ given by the Borel metric and canonical complex
structures. We use graphs of anti-symplectic maps to 
construct Lagrangian submanifolds of 
$\mathbb{F}_{H_{0}}$ and $\mathbb{F}%
_{H_{0}^{\ast }}$, these graphs will be used in the construction of  Lagrangian thimbles of $\mathcal O(H_0)$
in section \ref{RLT}.

\begin{notation}
	We will denote by $\Gamma(f)$ the graph of a map $f$.
\end{notation}

\begin{remark}\label{Rwo}
$\Gamma(R_{w_{0}})$, that is, the graph of $R_{w_{0}}$ the right translation by the principal involution of $\mathcal W$,
 is the orbit of $K$ by the diagonal
action. This orbit is the zero section of $T^{\ast }\mathbb{F}_{H_{0}}$
under the identification with $\mathcal{O}\left( H_{0}\right) \approx G\cdot
\left( H_{0},-H_{0}\right) $. Therefore, $\Gamma\left(
R_{w_{0}}\right) $ is a real Lagrangian submanifold of the product.
\end{remark}

%
%
%


\section{The gradient  field of  of $\mathrm{Re}f_H$}\label{RGR}

The field $Z\left( x\right) =[x,[\tau x,H]]$ is defined over the whole 
algebra $\mathfrak{g}$ and is tangent to the adjoint orbits, since the 
tangent space to  $\mathrm{Ad}\left( G\right) x$ at $x$ is the  image
of $\mathrm{ad}\left( x\right) $.
Assume here that both  $H$ and $H_{0}$ are regular and belong to the 
Weyl  chamber $\mathfrak{h}_{\mathbb{R}}^{+}$.
The field $Z$ is gradient, not with respect to the inner product coming from
$\mathfrak{g}$ (the real part of  $\mathcal{H}$), but with respect to the 
Riemannian metric  $m$ on the adjoint orbit $\mathcal{O}\left(
H_{0}\right) $, which does not extend naturally to $\mathfrak{g}$.

The metric $m$ is defined as follows: the tangent space $%
T_{x}\mathcal{O}\left( H_{0}\right) $ is the  image of $\mathrm{ad}\left(
x\right) $, which is the sum of the eigenspaces associated to the  nonzero 
eigenvalues of $x$. This happens because  $\mathrm{ad}\left( x\right) $ 
is conjugate to $\mathrm{ad}\left( H_{0}\right) $ (the formula 
$\mathrm{ad}\left( \phi x\right) =\phi \circ \mathrm{ad}\left( x\right)
\circ \phi ^{-1}$ holds true for any automorphism  $\phi \in \mathrm{Aut}\left( 
\mathfrak{g}\right) $, in particular for $\phi =\mathrm{Ad}\left( g\right) $%
, $g\in G$). Now, $\mathrm{ad}\left( H_{0}\right) $ is diagonalizable
and its image is the sum of the root spaces, which are the eigenspaces 
of the nonzero eigenvalues of  $\mathrm{ad}%
\left( H_{0}\right) $ (since $H_{0}$ is regular). By conjugation
the same is true for $\mathrm{ad}\left( x\right) $, $x\in \mathcal{O}%
\left( H_{0}\right) $.
As a consequence, the restriction of $\mathrm{ad}\left( x\right) $ 
to its image is an invertible linear transformation.

Taking this into account, define 
\[
m_{x}\left( u,v\right) =\left( \mathrm{ad}\left( x\right) ^{-1}u,\mathrm{ad}%
\left( x\right) ^{-1}v\right) , 
\]%
where $\left( \cdot ,\cdot \right) $ is the inner product given by the real 
part of $\mathcal{H}\left( \cdot ,\cdot \right) $, and the inverse of $\mathrm{ad}%
\left( x\right) $ is just the inverse of its restriction to the 
tangent space. The form $m_{x}\left( \cdot ,\cdot \right) $ is a well defined 
Riemannian metric on  $\mathcal{O}\left( H_{0}\right) $.

\begin{remark} The realification $\mathfrak{g}^{\mathbb{R}}$ of $%
	\mathfrak{g}$ is a real semisimple Lie algebra. Its 
	Cartan--Killing form $\langle \cdot ,\cdot \rangle ^{\mathbb{R}}$ is given by $%
	\langle \cdot ,\cdot \rangle ^{\mathbb{R}}=2\Re \langle \cdot ,\cdot
	\rangle $ (see \cite{amalglie}). Consequently, the inner product $\left(
	\cdot ,\cdot \right) $ is given by
	\[
	\left( X,Y\right) =-\frac{1}{2}\langle X,\tau Y\rangle ^{\mathbb{R}} 
	\]%
	where $\tau $ is conjugation with respect to $\mathfrak{u}$,
	which is a linear transformation of  $%
	\mathfrak{g}^{\mathbb{R}}$ (over $\mathbb {R}$).
\end{remark}

Returning to the field $Z\left( x\right) $, define the height function $h_{H}\colon%
\mathcal{O}\left( H_{0}\right) \rightarrow \mathbb{R}$ by
\[
h_{H}\left( x\right) =\left( x,H\right) . 
\]%
Given $A\in \mathfrak{g}$, the  tangent vector $[A,x]$ is given by 
\[
\lbrack A,x]=\frac{d}{dt}_{\left\vert t=0\right. }\mathrm{Ad}\left(
e^{tA}\right) x. 
\]%
Therefore, 
\begin{equation}
\left( dh_{H}\right) _{x}\left( [A,x]\right) =\frac{d}{dt}_{\left\vert
	t=0\right. }\left( \mathrm{Ad}\left( e^{tA}\right) x,H\right) =\left(
[A,x],H\right) .  \label{fordiffcaltur}
\end{equation}%
On one hand, 
\begin{eqnarray*}
	m_{x}\left( [A,x],Z\left( x\right) \right) &=&-m_{x}\left( \mathrm{ad}\left(
	x\right) A,\mathrm{ad}\left( x\right) [\tau x,H]\right) \\
	&=&-\left( A,[\tau x,H]\right) ,
\end{eqnarray*}%
by definition of $m_{x}$. On the other hand, by 
lemma \ref{lemsimetria} below,
$$\left( A,[\tau x,H]\right)
=\left( A,\mathrm{ad}\left( \tau x\right) H\right) =-\left( \mathrm{ad}%
\left( x\right) A,H\right) \text{.}$$ Thus,
$$
m_{x}\left( [A,x],Z\left( x\right) \right) =\left( \mathrm{ad}\left(
x\right) A,H\right) 
=-\left( [A,x],H\right) .
$$
Combining this with (\ref{fordiffcaltur}) we arrive at 
\[
\left( dh_{H}\right) _{x}\left( [A,x]\right) =-m_{x}\left( [A,x],Z\left(
x\right) \right) . 
\]%
In conclusion:

\begin{proposition}
	$Z\left( x\right) =-\grad h_{H}$ with respect to the metric $%
	m_{x}$. \ 
\end{proposition}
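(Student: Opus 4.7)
The plan is to verify the defining equation of the gradient directly: the statement $Z = -\grad h_H$ is equivalent to $(dh_H)_x(v) = -m_x(v, Z(x))$ for every tangent vector $v \in T_x\mathcal{O}(H_0)$. Since $\mathcal{O}(H_0) = \mathrm{Ad}(G)H_0$, the tangent space at $x$ coincides with the image of $\mathrm{ad}(x)$, so it suffices to check this identity for $v = [A,x]$ with $A \in \mathfrak{g}$ arbitrary.

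The left-hand side is straightforward to compute: because $h_H(y) = (y, H)$ is the restriction of a real-linear function on $\mathfrak{g}$, differentiating $t \mapsto \mathrm{Ad}(e^{tA})x$ at $t = 0$ yields
\[
(dh_H)_x([A,x]) = ([A,x], H).
\]
For the right-hand side I would unravel the definition $m_x(u,v) = (\mathrm{ad}(x)^{-1}u, \mathrm{ad}(x)^{-1}v)$. Writing $Z(x) = \mathrm{ad}(x)[\tau x, H]$ and $[A,x] = -\mathrm{ad}(x)A$, the two preimages under $\mathrm{ad}(x)$ are $-A$ and $[\tau x, H]$, so
\[
m_x([A,x], Z(x)) = -(A, [\tau x, H]).
\]

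To close the argument I need to transfer $\tau$ from $x$ to the other slot; concretely I need the skew-adjointness-type identity $(A, [\tau x, H]) = -(\mathrm{ad}(x)A, H)$. Granted this identity, the rest is bookkeeping of signs:
\[
m_x([A,x], Z(x)) = (\mathrm{ad}(x)A, H) = -([A,x], H) = -(dh_H)_x([A,x]),
\]
which is precisely the defining identity for $-\grad h_H = Z$.

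The principal obstacle is that final identity, which I would isolate as a small preparatory lemma. The proof of it should combine three ingredients: the formula $(\cdot, \cdot) = -\Re\langle \cdot, \tau \cdot\rangle$ for the real inner product in terms of the Cartan--Killing form; the fact that $\tau$ is a conjugate-linear Lie-algebra automorphism, so $\tau[X,Y] = [\tau X, \tau Y]$; and the skew-symmetry of $\mathrm{ad}$ with respect to $\langle \cdot, \cdot\rangle$. Pushing $\tau$ through $[\tau x, H]$ and then using skew-symmetry of $\mathrm{ad}(x)$ should produce the required sign. The rest of the proof is purely mechanical.
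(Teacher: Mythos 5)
Your proposal is correct and follows essentially the same route as the paper: you reduce the gradient identity to the same skew-adjointness relation $(A,[\tau x,H]) = -(\mathrm{ad}(x)A,H)$, which is precisely the paper's Lemma \ref{lemsimetria}, and your proposed proof of that lemma uses the same three ingredients (invariance of the Cartan--Killing form, $\tau$ being a conjugate-linear automorphism, and the definition of the inner product). The sign bookkeeping matches the paper's computation exactly.
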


\begin{lemma}\label{sym}
	\label{lemsimetria}Consider the inner product $\left( \cdot ,\cdot \right) :=%
	\Re \mathcal{H}\left( \cdot ,\cdot \right) $, then
	
	\begin{itemize}
		\item the conjugation $\tau $  is an isometry for this inner product, and 
		$$\left( \mathrm{ad}\left( X\right)
		Y,Z\right) =-\left( Y,\mathrm{ad}\left( \tau X\right) Z\right) \text{.}$$
		\item  if $\tau X=X$, that is, if $X\in \mathfrak{u}$, then $%
		\mathrm{ad}\left( X\right) $ is antisymmetric for
		$\left( \cdot ,\cdot \right) $, 
		\item  if $\tau Y=-Y$, that is,  $Y\in i\mathfrak{u}$, then $\mathrm{ad}\left( Y\right) $ 
		is symmetric for  $\left( \cdot ,\cdot \right) $.
	\end{itemize}
\end{lemma}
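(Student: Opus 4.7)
The plan is to derive everything from two classical facts: the $\mathrm{ad}$-invariance of the Cartan--Killing form, $\langle[X,Y],Z\rangle=-\langle Y,[X,Z]\rangle$, and the behaviour of $\langle\cdot,\cdot\rangle$ under the conjugation $\tau$. Because $\tau$ is a conjugate-linear Lie algebra involution, one has $\tau[X,Y]=[\tau X,\tau Y]$ and $\langle\tau X,\tau Y\rangle=\overline{\langle X,Y\rangle}$; both identities follow immediately from writing $X,Y$ in the decomposition $\mathfrak{g}=\mathfrak{u}\oplus i\mathfrak{u}$ and using that $\tau$ acts trivially on $\mathfrak{u}$ and by $-1$ on $i\mathfrak{u}$, together with the fact that both sides are $\mathbb{R}$-bilinear. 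These are the only inputs I will need.

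First I would verify that $\tau$ is an isometry for $(\cdot,\cdot)$. Starting from $\mathcal{H}_\tau(\tau X,\tau Y)=-\langle\tau X,\tau^2 Y\rangle=-\langle\tau X,Y\rangle$ and then using $\langle\tau X,Y\rangle=\overline{\langle X,\tau Y\rangle}$, we get $\mathcal{H}_\tau(\tau X,\tau Y)=\overline{\mathcal{H}_\tau(X,Y)}$, so taking real parts yields $(\tau X,\tau Y)=(X,Y)$.

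Next I would prove the key identity $(\mathrm{ad}(X)Y,Z)=-(Y,\mathrm{ad}(\tau X)Z)$. Using the $\mathrm{ad}$-invariance of the Killing form,
\begin{equation*}
\mathcal{H}_\tau([X,Y],Z)=-\langle[X,Y],\tau Z\rangle=\langle Y,[X,\tau Z]\rangle.
\end{equation*}
Since $\tau$ is a conjugate-linear automorphism, $[X,\tau Z]=\tau[\tau X,Z]$, hence
\begin{equation*}
\mathcal{H}_\tau([X,Y],Z)=\langle Y,\tau[\tau X,Z]\rangle=-\mathcal{H}_\tau(Y,[\tau X,Z]),
\end{equation*}
and taking real parts gives the stated formula. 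The last two bullet points are then immediate specializations: setting $\tau X=X$ yields $(\mathrm{ad}(X)Y,Z)=-(Y,\mathrm{ad}(X)Z)$, i.e.\ antisymmetry; setting $\tau Y=-Y$ produces $(\mathrm{ad}(Y)Z,W)=(Z,\mathrm{ad}(Y)W)$, i.e.\ symmetry.

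There is no real obstacle; the only point requiring care is the bookkeeping of the antilinearity of $\tau$ when moving it past the Cartan--Killing form (the identity $\tau[\tau X,Z]=[X,\tau Z]$ must be used in the correct direction, and one must remember that $\langle\cdot,\cdot\rangle$ is $\mathbb{C}$-bilinear rather than Hermitian, so taking real parts at the end is legitimate).
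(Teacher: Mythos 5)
Your proposal is correct and proceeds by essentially the same argument as the paper: both exploit the $\mathrm{ad}$-invariance of the Cartan--Killing form together with the conjugate-linear automorphism identities $\tau[X,Y]=[\tau X,\tau Y]$ and $\langle\tau X,\tau Y\rangle=\overline{\langle X,Y\rangle}$, then pass to real parts. The only cosmetic difference is that the paper establishes the equivalent self-adjointness relation $(\tau X, Y)=(X,\tau Y)$, whereas you verify the isometry $(\tau X,\tau Y)=(X,Y)$ directly; with $\tau^2=\mathrm{id}$ these are interchangeable.
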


\begin{proof}
	If $X\in \mathfrak{g}$ then $\mathcal{H}\left( \mathrm{ad}\left(
	X\right) Y,Z\right) =-\mathcal{H}\left( Y,\mathrm{ad}\left( \tau X\right)
	Z\right) $, and it follows that the same relation holds true for the inner product  $\left(
	\cdot ,\cdot \right) .$
	In fact, 
	\[
	\mathcal{H}\left( \tau X,Y\right) =-\langle \tau X,\tau Y\rangle =-\langle
	\tau Y,\tau X\rangle =\mathcal{H}\left( \tau Y,X\right) =\overline{\mathcal{H%
		}\left( X,\tau Y\right) }, 
	\]%
	which means that $\left( \tau X,Y\right) =\left( X,\tau Y\right) $.
	For the second item: 
	\begin{eqnarray*}
		\mathcal{H}\left( [X,Y],Z\right) &=&-\langle \lbrack X,Y],\tau Z\rangle
		=\langle Y,[X,\tau Z]\rangle \\
		&=&\langle Y,\tau \lbrack \tau X,Z]\rangle =-\mathcal{H}\left( Y,[\tau
		X,Z]\right) .
	\end{eqnarray*}%
\end{proof}

\begin{remark}({\it $Z$ as a field on $\mathfrak{g}$})
	We show that considered on the entire vector space $\mathfrak{g}$ the vector field  $Z\left( x\right)
	=[x,[\tau x,H]]$ is not gradient with respect to $\left( \cdot
	,\cdot \right) $.
	Take the differential form $\alpha
	_{x}\left( v\right) =\left( v,Z\left( x\right) \right) $. Then $%
	d\alpha \left( v,w\right) =v\alpha \left( w\right) -w\alpha \left( v\right)
	-\alpha \left[ v,w\right] $, where the last term vanishes if $v$ and 
	$w$ are regarded as constant vector fields on $\mathfrak{g}$. The expression
	for  $Z$ then gives
	$
	\left( d\alpha \right) _{x}\left( v,w\right) =$
	$$\left( w,[v,[\tau x,H]]\right)
	+\left( w,[x,[\tau v,H]]\right) -\left( v,[w,[\tau x,H]]\right) -\left(
	v,[x,[\tau w,H]]\right) . 
	$$%
	Evaluating this expression on $x=H_{1}\in \mathfrak{h}$, we obtain
	\begin{eqnarray*}
		\left( d\alpha \right) _{x}\left( v,w\right) &=&\left( w,[H_{1},[\tau
		v,H]]\right) -\left( v,[H_{1},[\tau w,H]]\right) \\
		&=&\left( w,\mathrm{ad}\left( H\right) \mathrm{ad}\left( H_{1}\right) \tau
		v\right) -\left( v,\mathrm{ad}\left( H\right) \mathrm{ad}\left( H_{1}\right)
		\tau w\right) .
	\end{eqnarray*}%
	We have
	$$
	\left( w,\mathrm{ad}\left( H\right) \mathrm{ad}\left( H_{1}\right) \tau
	v\right) =$$
	$$\left( \tau w,\tau \tau ^{-1}\mathrm{ad}\left( H\right) \mathrm{%
		ad}\left( H_{1}\right) \tau v\right) =\left( \tau w,\mathrm{ad}\left( \tau
	H\right) \mathrm{ad}\left( \tau H_{1}\right) v\right) $$
	$$=\left( \mathrm{ad}\left( \tau H_{1}\right) \mathrm{ad}\left( \tau
	H\right) \tau w,v\right) =\left( \mathrm{ad}\left( \tau H\right) \mathrm{ad}%
	\left( \tau H_{1}\right) \tau w,v\right)
	$$
	where the last equality comes from the fact that $\mathrm{ad}\left( H\right) 
	$ commutes with $\mathrm{ad}\left( H_{1}\right) $. Therefore, 
	\[
	\left( d\alpha \right) _{x}\left( v,w\right) =\left( \mathrm{ad}\left( \tau
	H\right) \mathrm{ad}\left( \tau H_{1}\right) \tau w,v\right) -\left( \mathrm{%
		ad}\left( H\right) \mathrm{ad}\left( H_{1}\right) \tau w,v\right) . 
	\]%
	But, setting $\tau H=-H$ and $\tau \left( H_{1}\right) =H_{1}$,  then  the right hand side becomes
	$-2\left( \mathrm{ad}\left( H\right) \mathrm{ad}\left(
	H_{1}\right) \tau w,v\right) $ which  does not vanish identically on $v,w$%
	. Thus, $d\alpha \neq 0$, implying that the vector field is not
	gradient on $\mathfrak g$.
\end{remark}

We return to the study of the singularities of the gradient field $Z$ on the orbit 
$\mathcal{O} \left( H_{0}\right)$.
We have verified that the set of such singularities  is $\mathcal{O}\left( H_{0}\right) \cap \mathfrak{h}
$, which is the orbit of $H_{0}\in \mathfrak{h}$ by the Weyl group.
We now recall the proof that these singularities are nondegenerate. To see this,
let $x=wH_{0}$ be one of the singularities. Then the differential of $Z$ 
at $x$ is given by
\begin{eqnarray*}
	dZ_{x}\left( v\right) &=&[v,[\tau x,H]]+[x,[\tau v,H]]=[x,[\tau v,H]] \\
	&=&-\mathrm{ad}\left( x\right) \mathrm{ad}\left( H\right) \left( \tau
	v\right) .
\end{eqnarray*}%
The tangent space to $\mathcal{O}\left( H_{0}\right) $ at $x$ is
\[
T_{x}\mathcal{O}\left( H_{0}\right) =\sum_{\alpha \in \Pi }\mathfrak{g}%
_{\alpha }=\sum_{\alpha >0}\left( \mathfrak{g}_{\alpha }\oplus \mathfrak{g}%
_{-\alpha }\right) . 
\]%
If $v=\sum_{\alpha \in \Pi }a_{\alpha }X_{\alpha }$ then $\tau
v=-\sum_{\alpha \in \Pi }\overline{a_{\alpha }}X_{-\alpha }$. Consequently, 
\begin{eqnarray*}
	dZ_{x}\left( v\right) &=&\mathrm{ad}\left( x\right) \mathrm{ad}\left(
	H\right) \left( \sum_{\alpha \in \Pi }\overline{a_{\alpha }}X_{-\alpha
	}\right) \\
	&=&\sum_{\alpha \in \Pi }\overline{a_{\alpha }}\alpha \left( x\right) \alpha
	\left( H\right) X_{-\alpha }.
\end{eqnarray*}%
In particular, let $\alpha $ be a \textit{positive} root. Then, $%
\mathfrak{g}_{\alpha }+\mathfrak{g}_{-\alpha }$ (regarded as a 
\textit{real} vector space) is invariant by $dZ_{x}$.
Furthermore, with respect to the basis $%
\{X_{\alpha },X_{-\alpha },iX_{\alpha },iX_{-\alpha }\}$, the restriction of
$dZ_{x}$ to this  subspace is given by the matrix
\[
\alpha \left( x\right) \alpha \left( H\right) \left( 
\begin{array}{cccc}
0 & 1 &  &  \\ 
1 & 0 &  &  \\ 
&  & 0 & -1 \\ 
&  & -1 & 0%
\end{array}%
\right) , 
\]%
which has eigenvalues $\pm \alpha \left( x\right) \alpha \left(
H\right) $ with associated eigenspaces 
\begin{eqnarray*}
	V_{-\alpha \left( x\right) \alpha \left( H\right) } &=&\mathrm{span}_{%
		\mathbb{R}}\{X_{\alpha }-X_{-\alpha },i\left( X_{\alpha }+X_{-\alpha
	}\right) \}=\left( \mathfrak{g}_{\alpha }+\mathfrak{g}_{-\alpha }\right)
	\cap \mathfrak{u}, \\
	V_{\alpha \left( x\right) \alpha \left( H\right) } &=&\mathrm{span}_{\mathbb{%
			R}}\{X_{\alpha }+X_{-\alpha },i\left( X_{\alpha }-X_{-\alpha }\right)
	\}=\left( \mathfrak{g}_{\alpha }+\mathfrak{g}_{-\alpha }\right) \cap i%
	\mathfrak{u}.
\end{eqnarray*}

Therefore, $T_{x}\mathcal{O}\left( H_{0}\right) =\sum_{\alpha \in \Pi }%
\mathfrak{g}_{\alpha }$  decomposes into $T_{x}\mathcal{O}\left(
H_{0}\right) =V_{x}^{+}\oplus V_{x}^{-}$, where $V_{x}^{+}$ (unstable space) 
is the sum of eigenspaces with positive eigenvalues and $%
V_{x}^{-}$ (stable space) is where  $dZ_{x}$ has negative eigenvalues.
The dimension of  $T_{x}\mathcal{O}\left(
H_{0}\right) $ over $\mathbb{R}$ is $2|\Pi |$, whereas $\dim _{\mathbb{R}}V^{\pm
}=|\Pi |$.

\begin{proposition}
	The subspaces $V_{x}^{+}$ and $V_{x}^{-}$ are Lagrangian  with respect to the 
	symplectic form $\Omega =\Im \mathcal{H}$.
\end{proposition}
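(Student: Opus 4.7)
The plan is to exploit the two-part structure of the problem: the decomposition of $V_x^{\pm}$ into pieces indexed by positive roots (already given in the excerpt) together with the behavior of $\Omega=\Im\mathcal{H}$ on the real forms $\mathfrak{u}$ and $i\mathfrak{u}$.

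First I would record two ingredients. \emph{Both $\mathfrak{u}$ and $i\mathfrak{u}$ are isotropic for $\Omega$:} if $X,Y\in\mathfrak{u}$ then $\tau Y=Y$, so $\Omega(X,Y)=-\Im\langle X,Y\rangle$, and the Cartan--Killing form restricted to $\mathfrak{u}$ is real-valued (it is the negative of the compact inner product), hence its imaginary part vanishes; the same conclusion follows on $i\mathfrak{u}$ using $\tau|_{i\mathfrak{u}}=-\id$. \emph{Distinct root spaces are $\mathcal{H}$-orthogonal:} since $\tau\mathfrak{g}_{\gamma}=\mathfrak{g}_{-\gamma}$, one has $\mathcal{H}(\mathfrak{g}_{\alpha},\mathfrak{g}_{\beta})=-\langle\mathfrak{g}_{\alpha},\mathfrak{g}_{-\beta}\rangle$, and the Cartan--Killing form pairs $\mathfrak{g}_{\alpha}$ nontrivially only with $\mathfrak{g}_{-\alpha}$; this vanishes unless $\beta=\alpha$.

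Now I would assemble the proof root by root. The excerpt exhibits each $V_x^{\pm}$ as an $\mathbb{R}$-linear direct sum, over positive roots $\alpha$, of two-dimensional subspaces $W_{\alpha}^{\pm}$; each $W_{\alpha}^{\pm}$ equals either $(\mathfrak{g}_{\alpha}+\mathfrak{g}_{-\alpha})\cap\mathfrak{u}$ or $(\mathfrak{g}_{\alpha}+\mathfrak{g}_{-\alpha})\cap i\mathfrak{u}$, the choice being dictated by the sign of $\alpha(x)\alpha(H)$. Thus every $W_{\alpha}^{\pm}$ lies inside $\mathfrak{u}$ or inside $i\mathfrak{u}$, so by the first ingredient $\Omega$ vanishes on it. For distinct positive roots $\alpha\neq\beta$, vectors in $W_{\alpha}^{\pm}$ and $W_{\beta}^{\pm}$ pair trivially under $\Omega$ by the second ingredient (applied to the appropriate $\mathfrak{g}_{\pm\alpha}$ and $\mathfrak{g}_{\pm\beta}$ after acting by $\tau$). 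Summing these vanishings, $\Omega$ is identically zero on $V_x^{\pm}$. Combined with the dimension equality $\dim_{\mathbb{R}}V_x^{\pm}=|\Pi|=\tfrac{1}{2}\dim_{\mathbb{R}}T_x\mathcal{O}(H_0)$ noted in the excerpt, this proves that each $V_x^{\pm}$ is Lagrangian.

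The main subtlety to flag is that $V_x^{\pm}$ is \emph{not} itself contained in $\mathfrak{u}$ or in $i\mathfrak{u}$: the assignment of a given positive root $\alpha$ to one or the other of those real forms depends on the sign of $\alpha(x)\alpha(H)$, which varies with $\alpha$. One therefore cannot simply invoke ``$\mathfrak{u}$ and $i\mathfrak{u}$ are Lagrangian'' as a single global statement, and the argument genuinely needs the orthogonality of different root spaces to kill the cross-terms between distinct $W_{\alpha}^{\pm}$ and $W_{\beta}^{\pm}$. This root-by-root bookkeeping is the crux, but once the two structural ingredients above are in place the verification is automatic.
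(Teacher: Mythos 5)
Your proof is correct and follows essentially the same route as the paper's: show each two-dimensional block $V_{\pm\alpha(x)\alpha(H)}$ is isotropic because it lies in $\mathfrak{u}$ or $i\mathfrak{u}$ (where $\mathcal{H}$ is real-valued), kill the cross-terms between distinct roots using $\mathcal{H}$-orthogonality of $\mathfrak{g}_{\alpha}+\mathfrak{g}_{-\alpha}$ and $\mathfrak{g}_{\beta}+\mathfrak{g}_{-\beta}$, and conclude by the dimension count $\dim_{\mathbb{R}}V_x^{\pm}=|\Pi|$. Your ``subtlety to flag'' at the end is in fact exactly why the paper's proof includes the orthogonality step, so you have correctly identified the crux of the argument.
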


\begin{proof}
	$V_{\alpha \left( x\right) \alpha \left( H\right) }$ and $%
	V_{-\alpha \left( x\right) \alpha \left( H\right) }$ are isotropic subspaces, since 
	they are contained in either $\mathfrak{u}$ or $i\mathfrak{u}$
	and both are subspaces where the  Hermitian form $\mathcal{H}$ takes 
	real values. On the other hand,  if $\alpha \neq \beta $ are positive roots,
	then $\mathfrak{g}_{\alpha }+\mathfrak{g}_{-\alpha }$ is orthogonal to
	$\mathfrak{g}_{\beta }+\mathfrak{g}_{-\beta }$ with respect to the 
	Cartan--Killing form, and since these subspaces are $\tau $%
	-invariant they are also orthogonal with respect to  $%
	\mathcal{H}$. Therefore, $\mathcal{H}$ assumes real values on $%
	V_{x}^{+}$ and on $V_{x}^{-}$ as well, hence these subspaces are isotropic, and
	by dimension count  they are Lagrangian.
\end{proof}

The subspaces $V_{x}^{+}$ and $V_{x}^{-}$ are the tangent subspaces
to the unstable and stable submanifolds of $Z$ with respect to the 
fixed point  $x$. These submanifolds are denoted by $\mathcal{V}%
_{x}^{+}$ and $\mathcal{V}_{x}^{-}$, respectively.

We will investigate the stable manifold of $Z$  for the  case $x=H_{0}$.
If $\alpha >0$ then $\alpha \left( H_{0}\right) $, $\alpha \left(
H\right) $ and $\alpha \left( H_{0}\right) \alpha \left( H\right) $ are all 
positive. It follows that 
\[
V_{H_{0}}^{+}=i\mathfrak{u}\cap \sum_{\alpha >0}\left( \mathfrak{g}_{\alpha
}+\mathfrak{g}_{-\alpha }\right), \qquad V_{H_{0}}^{-}=\mathfrak{u}\cap
\sum_{\alpha >0}\left( \mathfrak{g}_{\alpha }+\mathfrak{g}_{-\alpha }\right)
. 
\]

\vspace{12pt}%

\noindent%
%

Observe that $\mathfrak{u}\cap \sum_{\alpha
	>0}\left( \mathfrak{g}_{\alpha }+\mathfrak{g}_{-\alpha }\right) $ is a
\textit{real} vector space with basis
\[
\{A_{\alpha }=X_{\alpha }-X_{-\alpha },iS_{\alpha }=i\left( X_{\alpha
}+X_{-\alpha }\right) :\alpha >0\}. 
\]%
Moreover, for $H\in \mathfrak{h}$ and $\alpha >0$ the following relations hold:

\begin{itemize}
	\item $[H,A_{\alpha }]=\alpha \left( H\right) \left( X_{\alpha }+X_{-\alpha
	}\right) $.
	
	\item $[H,S_{\alpha }]=\alpha \left( H\right) i\left( X_{\alpha }-X_{-\alpha
	}\right) $.
	
	\item $\langle A_{\alpha },S_{\alpha }\rangle =0$, and $\langle A_{\alpha
	},A_{\alpha }\rangle =\langle A_{\alpha },A_{\alpha }\rangle =2$ since,  $\langle X_{\alpha },X_{-\alpha }\rangle =1$.
	
	\item If $\beta \neq \alpha $ then $\langle A_{\alpha },A_{\beta
	}\rangle =\langle S_{\alpha },S_{\beta }\rangle =\langle A_{\alpha
	},S_{\beta }\rangle =\langle S_{\alpha },A_{\beta }\rangle =0$.
\end{itemize}

\begin{lemma}
	For $x=z+y$ with $y\in \mathfrak{u}$ and $z\in \mathfrak{h}_{\mathbb{R}}^{+}$,
	we have that $\mathcal{H}_{\tau }\left( Z\left( x\right) ,y\right) $ is
	real and negative.
\end{lemma}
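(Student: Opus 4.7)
The plan is to exploit how $\tau$ acts on $\mathfrak{h}_{\mathbb{R}}$ and on $\mathfrak{u}$ to simplify both $Z(x)$ and $\mathcal{H}_{\tau}(Z(x),y)$, then apply $\mathrm{ad}$-invariance of the Cartan--Killing form twice to recast the expression in a form where its sign is evident from the root-space decomposition.

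First I would observe that $\tau x = -z + y$, since $\tau$ acts by $-\mathrm{id}$ on $\mathfrak{h}_{\mathbb{R}}$ and by $+\mathrm{id}$ on $\mathfrak{u}$. Because $z,H\in\mathfrak{h}$ commute, $[\tau x,H] = [y,H]$, so $Z(x) = [x,[y,H]]$. Using $\tau y=y$ together with $\mathrm{ad}$-invariance $\langle[x,a],b\rangle = -\langle a,[x,b]\rangle$ and the identity $[y,y]=0$ gives
\[
\mathcal{H}_{\tau}(Z(x),y) \;=\; -\langle Z(x),y\rangle \;=\; \langle [y,H],[z,y]\rangle.
\]
Rewriting $[y,H] = -\mathrm{ad}(H)y$ and $[z,y] = \mathrm{ad}(z)y$ and applying $\mathrm{ad}$-invariance a second time collapses this to
\[
\mathcal{H}_{\tau}(Z(x),y) \;=\; \langle y,\mathrm{ad}(H)\mathrm{ad}(z)y\rangle.
\]
The point of this reformulation is that $\mathrm{ad}(H)\mathrm{ad}(z)$ is diagonalizable, vanishes on $\mathfrak{h}$, and acts on each root space $\mathfrak{g}_{\alpha}$ as multiplication by $\alpha(H)\alpha(z)>0$; positivity holds for every root because $H,z\in\mathfrak{h}_{\mathbb{R}}^{+}$ forces $\alpha(H)$ and $\alpha(z)$ to have the same sign.

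To conclude, I would decompose $y = y_{0} + \sum_{\alpha} a_{\alpha}X_{\alpha}$ along the root-space decomposition. The condition $y\in\mathfrak{u}$ combined with the standard identity $\tau X_{\alpha}=-X_{-\alpha}$ forces $a_{-\alpha}=-\overline{a_{\alpha}}$, and together with the orthogonality of distinct root spaces and the normalization $\langle X_{\alpha},X_{-\alpha}\rangle = 1$ this yields $\langle y,y_{\alpha}\rangle = -|a_{\alpha}|^{2}$. Summing gives
\[
\mathcal{H}_{\tau}(Z(x),y) \;=\; -\sum_{\alpha}\alpha(H)\alpha(z)\,|a_{\alpha}|^{2} \;=\; -2\sum_{\alpha>0}\alpha(H)\alpha(z)\,|a_{\alpha}|^{2},
\]
which is manifestly real and nonpositive, and strictly negative whenever $y$ has a nonzero component outside $\mathfrak{h}$ (equivalently, whenever $x$ is not a critical point of $Z$).

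The main thing that could go wrong is sign bookkeeping: the signs of $\tau X_{\alpha}$ and of $\langle X_{\alpha},X_{-\alpha}\rangle$ must be tracked consistently with the basis $\{A_{\alpha},iS_{\alpha}\}$ of $\mathfrak{u}$ fixed earlier in the paper. Reality of the final expression can be checked independently by noting that $\tau$ commutes with $\mathrm{ad}(H)\mathrm{ad}(z)$ (since $\tau H=-H$ and $\tau z=-z$), forcing $\overline{\langle y,\mathrm{ad}(H)\mathrm{ad}(z)y\rangle} = \langle y,\mathrm{ad}(H)\mathrm{ad}(z)y\rangle$; this provides a useful consistency check on the root-space computation.
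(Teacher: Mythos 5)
Your proof is correct and follows essentially the same route as the paper: use $\tau x=y-z$ and $[z,H]=0$ to get $Z(x)=[x,[y,H]]$, then apply $\mathrm{ad}$-invariance of the Cartan--Killing form to reduce $\mathcal{H}_\tau(Z(x),y)$ to a pairing that is diagonal in the root-space decomposition, with sign controlled by $\alpha(H)\alpha(z)>0$. The only differences are bookkeeping ones --- you collapse to the single self-adjoint operator $\mathrm{ad}(H)\mathrm{ad}(z)$ and expand $y$ in the complex Weyl basis with the reality constraint $a_{-\alpha}=-\overline{a_\alpha}$, while the paper expands $[H,y]$ and $[z,y]$ separately in the real basis $\{A_\alpha,iS_\alpha\}$ of $\mathfrak{u}$ --- and you are in fact more careful than the paper on two points worth keeping: you justify reality explicitly via $\tau$-invariance of $\mathrm{ad}(H)\mathrm{ad}(z)$ rather than leaving it implicit, and you observe that the sign is only $\le 0$ in general, strict negativity requiring $y\notin\mathfrak{h}$ (equivalently $Z(x)\neq 0$), a caveat the lemma's statement glosses over.
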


\begin{proof}
	We have $\tau x=y-z$. Thus, $Z\left( x\right)
	=[y+z,[y-z,H]]=[y+z,[y,H]]$. Consequently, 
	\begin{eqnarray*}
		\left( Z\left( x\right) ,y\right) &=&\left( [y+z,[y,H]],y\right) =\left(
		[y,H],[z,y]\right) \\
		&=&-\left( [H,y],[z,y]\right)\text{.}
	\end{eqnarray*}
	Set $y=\sum_{\alpha >0}\left( a_{\alpha }S_{\alpha }+b_{\alpha
	}A_{\alpha }\right) $ with $a_{\alpha },b_{\alpha }\in \mathbb{R}$. Then,
	by the above relations 
	\[
	\lbrack H,y]=\sum_{\alpha >0}\alpha \left( H\right) \left( a_{\alpha }\left(
	X_{\alpha }-X_{-\alpha }\right) +b_{\alpha }i\left( X_{\alpha }+X_{-\alpha
	}\right) \right) , 
	\]%
	and similarly with $z$ in place of $H$. Still using the above relations, we obtain
	\[
	\langle \lbrack H,y]],[z,y]\rangle =2\sum_{\alpha >0}\alpha \left( H\right)
	\alpha \left( H_{0}\right) \left( a_{\alpha }^{2}+b_{\alpha }^{2}\right) 
	\]%
	which is $>0$ because $\alpha \left( H\right) ,\alpha \left( H_{0}\right) >0$
	and $a_{\alpha },b_{\alpha }\in \mathbb{R}$. This finishes the proof, since $\left( Z\left( x\right) ,y\right) =-\left( [H,y],[z,y]\right) $.
\end{proof}

\section{Lagrangian vanishing cycles}\label{LVC}

We construct Lagrangian spheres inside regular fibres, which  are our candidates for 
vanishing cycles. The correct dimension of the desired spheres is $n-1$ real, 
that is, half  of the dimension of the regular fibre. Here $n$ is the complex dimension of the adjoint orbit, and 
the real dimension of the flag $\mathbb{F}_{\Theta }$  where $\Theta =\Theta
\left( H_{0}\right) =\{\alpha \in \Sigma :\alpha \left( H_{0}\right) =0\}$.
The number of Lagrangian spheres to be found equals  $|\mathcal{W}|$, that is, the number of singularities. 

Here we assume that $H_{0}\in \mathrm{cl}\mathfrak{a}^{+}$  and that  $H\in \mathfrak{a}%
^{+} $, hence $H$ is regular.
Recall that the symplectic form $\Omega $ on the orbit
$\mathcal{O}\left( H_{0}\right) $  is the restriction of the imaginary part of the 
Hermitian form of $\mathfrak{g}$
\begin{equation*}
\mathcal{H}_{\tau }\left( X,Y\right) =-\langle X,\tau Y\rangle .
\end{equation*}%
On the other hand, the real part is the inner product defined by
\begin{equation*}
B_{\tau }\left( X,Y\right) =-\text{Re}\langle X,\tau Y\rangle =-\frac{1}{2}%
\langle X,\tau Y\rangle ^{R},
\end{equation*}%
where   $\langle \cdot ,\cdot \rangle ^{R}$ is the  Cartan--Killing
form of the realification of  $\mathfrak{g}$. Thus, 
\begin{equation*}
\mathcal{H}_{\tau }\left( X,Y\right) =B_{\tau }\left( X,Y\right) +i\Omega
\left( X,Y\right) 
\end{equation*}%
and the equality $\Omega \left( X,Y\right) =B_{\tau }\left( X,iY\right) $ holds
since $\mathcal{H}_{\tau }\left( X,Y\right) $ is Hermitian.

We can  search for an isotropic submanifold by taking  a subspace
$V\subset \mathfrak{g}$ where  $\mathcal{H}_{\tau }$ takes real values, 
and then check whether the intersection  $V\cap \mathfrak{g}$ is indeed a submanifold.

Two examples of subspaces where $\mathcal{H}_{\tau }$ takes real values are:
i) the  compact real form $\mathfrak{u}$, where $\mathcal{H}%
_{\tau }$ is negative definite and ii) the symmetric part  $i\mathfrak{u}$%
, where $\mathcal{H}_{\tau }$ is positive definite.

The intersection  $%
\mathfrak{u}\cap \mathcal{O}\left( H_{0}\right) $ is empty because the 
eigenvalues of  $\mathrm{ad}\left( X\right) $, for $X\in \mathcal{O}\left(
H_{0}\right) $ are real  whereas those of $\mathrm{ad}\left( Y\right) $%
, for  $Y\in \mathfrak{u}$ are imaginary. The latter happens because $\mathrm{ad}\left( Y\right) $
is anti-symmetric with respect to the Cartan--Killing form of $\mathfrak u$, see lemma \ref{sym}.
On the other hand, the intersection $i\mathfrak{u}\cap \mathcal{O}\left(
H_{0}\right) $ is the flag  $\mathbb{F}_{\Theta }$ itself, since it is the  
orbit of the compact group $U=\exp \mathfrak{u}$.

Therefore, $\mathbb{F}%
_{\Theta }$ is an isotropic submanifold, in fact  Lagrangian,  and
any submanifold of  $\mathbb{F}_{\Theta }$ is isotropic as well.
Moreover, the function $f_{H}\left( x\right) =\langle
H,x\rangle $ takes real values on  $\mathbb{F}_{\Theta }=i\mathfrak{u}\cap 
\mathcal{O}\left( H_{0}\right) $. 
Since by hypothesis $H$ is regular, it follows that the restriction 
$f_{H}^{\Theta }$ of $%
f_{H}$ to $\mathbb{F}_{\Theta }$ is a  Morse function. 
The origin  $H_{0}$ is a  singularity and the hypothesis that 
$H_{0}\in \mathrm{cl}\mathfrak{a}^{+}$ implies that  $H_{0}$ is an 
attractor (with negative definite Hessian). Therefore, the levels  $%
\left( f_{H}^{\Theta }\right) ^{-1}\left( f_{H}^{\Theta }\left( x\right)
\right) $ of $f_{H}^{\Theta }$ around $H_{0}$ are codimension 1 
spheres in  $\mathbb{F}_{\Theta }$. These levels are isotropic 
submanifolds. Clearly  $\left( f_{H}^{\Theta
}\right) ^{-1}\left( f_{H}^{\Theta }\left( x\right) \right) \subset \left(
f_{H}\right) ^{-1}\left( f_{H}\left( x\right) \right) $ and since  $\dim \left(
f_{H}\right) ^{-1}\left( f_{H}\left( x\right) \right) =\dim \mathbb{F}%
_{\Theta }-2$, it follows that for $x$ around $%
H_{0}$ the spheres $\left( f_{H}^{\Theta }\right)
^{-1}\left( f_{H}^{\Theta }\left( x\right) \right) $ are  Lagrangian cycles  at the levels  $\left( f_{H}\right)
^{-1}\left( f_{H}\left( x\right) \right) $ of the  Lefschetz fibrations.

We can now carry out the analogous construction around  other 
critical points   $wH_{0}$, $w\in \mathcal{W}$.
We  use the following notation

\begin{enumerate}
	\item[($\iota$)] Given a root  $\alpha >0$, let 
	\begin{equation*}
	\mathfrak{u}_{\alpha }=\left( \mathfrak{g}_{\alpha }\oplus \mathfrak{g}%
	_{-\alpha }\right) \cap \mathfrak{u} \qquad \mathrm{and}\qquad i%
	\mathfrak{u}_{\alpha }=\left( \mathfrak{g}_{\alpha }\oplus \mathfrak{g}%
	_{-\alpha }\right) \cap i\mathfrak{u}.
	\end{equation*}
	Taking a  Weyl basis $X_{\beta }\in \mathfrak{g}_{\beta }$, with $\beta $
	a root, these subspaces are generated by:
	
	\begin{itemize}
		
		\item $\mathfrak{u}_{\alpha }=\mathrm{span}_{\mathbb{R}}\{A_{\alpha
		}=X_{\alpha }-X_{-\alpha },iS_{\alpha }=i\left( X_{\alpha }+X_{-\alpha
		}\right) \}$ and
		
		\item $i\mathfrak{u}_{\alpha }=\mathrm{span}_{\mathbb{R}}\{iA_{\alpha
		}=i\left( X_{\alpha }-X_{-\alpha }\right) ,S_{\alpha }=X_{\alpha
		}+X_{-\alpha}\}$.
	\end{itemize}

	\item[($\iota\iota$)] For $w\in \mathcal{W}$, let  $\Pi _{w}=\Pi ^{+}\cap w^{-1}\Pi ^{-}$  be the 
	set of positive roots that are taken to negative roots by $w$.
	
	\item[($\iota\iota\iota$)] For $w\in \mathcal{W}$ define the real vector subspace 
	\begin{equation*}
	V_{w}=\mathfrak{h}_{\mathbb{R}}\oplus \sum_{\alpha \in \Pi _{w}}\mathfrak{u}%
	_{\alpha }\oplus \sum_{\alpha \in \Pi ^{+}\setminus \Pi _{w}}i\mathfrak{u}%
	_{\alpha }.
	\end{equation*}
\end{enumerate}

When $w=1$ the subspace  $V_{1}=i\mathfrak{u}$, since $\Pi _{1}=\emptyset 
$. The subspaces $V_{w}$, $1\neq w\in \mathcal{W}$, will replace $i%
\mathfrak{u}$ in the constructions of spheres  around the critical points  $%
wH_{0}$.

\begin{lemma}
	\label{lemhrealemvw}   $\mathcal{H}_{\tau }$ and the 
	Cartan--Killing  form $\langle \cdot ,\cdot \rangle $ take real values in  $V_{w}$.
\end{lemma}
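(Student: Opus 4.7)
The plan is to exploit the root-space decomposition to show that both forms are block-diagonal on $V_w$ with real blocks, then verify reality on each block separately.

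\emph{First} I would establish mutual orthogonality of the summands of $V_w$. Recall that with respect to the Cartan--Killing form $\langle\cdot,\cdot\rangle$, the Cartan subalgebra $\mathfrak{h}$ is orthogonal to every root space $\mathfrak{g}_\alpha$, and $\mathfrak{g}_\gamma$ pairs nontrivially with $\mathfrak{g}_\delta$ only if $\gamma + \delta = 0$. Hence for distinct positive roots $\alpha\neq\beta$ the subspace $\mathfrak{g}_\alpha\oplus\mathfrak{g}_{-\alpha}$ is Killing-orthogonal to $\mathfrak{g}_\beta\oplus\mathfrak{g}_{-\beta}$, and $\mathfrak{h}_{\mathbb{R}}$ is orthogonal to each. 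This gives mutual orthogonality of the summands of $V_w$ under $\langle\cdot,\cdot\rangle$. Since $\tau$ preserves $\mathfrak{h}$ and sends $\mathfrak{g}_\alpha$ to $\mathfrak{g}_{-\alpha}$, each summand is $\tau$-stable, so mutual orthogonality immediately transfers to $\mathcal{H}_\tau(X,Y)=-\langle X,\tau Y\rangle$.

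\emph{Next} I would check that each diagonal block is real-valued. On $\mathfrak{h}_{\mathbb{R}}$ the Killing form is real (this is the standard fact that $\mathfrak{h}_{\mathbb{R}}$ is the real span of the coroots and $\langle\cdot,\cdot\rangle$ is positive definite there); moreover $\mathfrak{h}_{\mathbb{R}}\subset i\mathfrak{u}$, so for $H\in\mathfrak{h}_{\mathbb{R}}$ we have $\tau H=-H$ and hence $\mathcal{H}_\tau(H_1,H_2)=\langle H_1,H_2\rangle\in\mathbb{R}$. On $\mathfrak{u}_\alpha\subset\mathfrak{u}$ the Killing form is real (negative definite on $\mathfrak{u}$), and $\tau Y=Y$ forces $\mathcal{H}_\tau(X,Y)=-\langle X,Y\rangle\in\mathbb{R}$. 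On $i\mathfrak{u}_\alpha\subset i\mathfrak{u}$ the Killing form is real (positive definite on $i\mathfrak{u}$, since it equals the negative of the Killing form of the corresponding element in $\mathfrak{u}$), and $\tau Y=-Y$ gives $\mathcal{H}_\tau(X,Y)=\langle X,Y\rangle\in\mathbb{R}$. A short explicit check using the basis $\{A_\alpha, iS_\alpha\}$ of $\mathfrak{u}_\alpha$ and $\{iA_\alpha, S_\alpha\}$ of $i\mathfrak{u}_\alpha$ and the normalization $\langle X_\alpha,X_{-\alpha}\rangle=1$ confirms the pairings are real in each $\alpha$-block.

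\emph{Finally} assembling the block-diagonal decomposition yields the result: every pair of vectors in $V_w$ has Killing inner product and $\mathcal{H}_\tau$-pairing equal to the sum of the reality-preserving contributions from the matching blocks.

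The argument is a routine case analysis, so I do not anticipate a serious obstacle. The only point requiring care is the $\tau$-invariance step used to propagate orthogonality from $\langle\cdot,\cdot\rangle$ to $\mathcal{H}_\tau$; if that failed, the cross terms between a $\mathfrak{u}_\alpha$-summand (where $\tau=+\mathrm{id}$) and an $i\mathfrak{u}_\beta$-summand (where $\tau=-\mathrm{id}$) could in principle be nonreal, but the fact that $\tau$ preserves each individual $\mathfrak{g}_\alpha\oplus\mathfrak{g}_{-\alpha}$ makes this a non-issue.
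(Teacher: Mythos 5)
Your proof is correct and takes essentially the same approach as the paper's: verify reality on each summand of $V_w$ (using definiteness of the forms on $\mathfrak{h}_{\mathbb{R}}$, $\mathfrak{u}_\alpha$, and $i\mathfrak{u}_\alpha$) and then observe that the summands are mutually orthogonal for both $\langle\cdot,\cdot\rangle$ and $\mathcal{H}_\tau$. You simply spell out a few details (the root-space pairing rule and the use of $\tau$-stability to transfer orthogonality) that the paper states without proof.
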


\begin{proof} Both
	$\mathcal{H}_{\tau }$ and  $\langle \cdot ,\cdot \rangle $ are
	real in each of the components of  $V_{w}$ (positive definite in  $%
	\mathfrak{h}_{\mathbb{R}}$ and  $\sum_{\alpha \in \Pi ^{+}\setminus \Pi _{w}}i
	\mathfrak{u}_{\alpha }$ and negative definite in $\sum_{\alpha \in \Pi _{w}}%
	\mathfrak{u}_{\alpha }$). Moreover $\mathfrak{h}_{\mathbb{R}}$, $%
	\mathfrak{u}_{\alpha }$ and  $\mathfrak{u}_{\beta }$ are orthogonal 
	with respect to   $\mathcal{H}_{\tau }$ and to  $\langle \cdot ,\cdot \rangle $ if $%
	\alpha \neq \beta $.\end{proof}

Consequently, the restriction of the imaginary part  $\Omega $
of $H_{\tau }$  to $V_{w}$ vanishes identically.
On the orbit  $\mathcal{O}\left( H_{0}\right) $  we define a distribution
$\Delta _{w}\left( x\right) \subset T_{x}\mathcal{O}\left(
H_{0}\right) $ by
\begin{equation*}
\Delta _{w}\left( x\right) =V_{w}\cap T_{x}\mathcal{O}\left( H_{0}\right) .
\end{equation*}%
By lemma \ref{lemhrealemvw}, the subspaces $\Delta _{w}\left( x\right) $
are isotropic with respect to the symplectic form $%
\Omega $ (restricted to the orbit).
The goal is to prove that this distribution is integrable
(at least around the singularity $wH_{0}$). Once this is accomplished, 
the integral submanifold passing through  $wH_{0}$ will be a 
Lagrangian submanifold (for  $\Omega $). Consequently, a ball around the 
singularity $wH_{0}$, inside the integral submanifold will be 
our candidate to a  Lagrangian thimble.

\begin{remark} A priori a   distribution obtained by intersecting 
	a fixed subspace with the tangent spaces of an embedded submanifold
	(such as  our distribution  $\Delta _{w}$)  might not even 
	be continuous (that is, admit local parametrizations 
	by continuous fields). As an example, consider the case of the circle  $%
	S^{1}=\{x\in \mathbb{R}^{2}:|x|=1\}$.  The horizontal line  $\{\left(
	t,0\right) :t\in \mathbb{R\}}$ contains the tangent space at  $\left(
	0,1\right) $ however, it intersects in dimension zero the tangent spaces  
	of points near  $\left( 0,1\right) $. For a continuous distribution
	the dimension does not decrease around a point. 
\end{remark}

At the singularity $wH_{0}$ (or any other singularity) the 
distribution is 
\begin{equation*}
\Delta _{w}\left( wH_{0}\right) =\sum_{\alpha \in \Pi _{w}}\mathfrak{u}%
_{\alpha }\oplus \sum_{\alpha \in \Pi ^{+}\setminus \Pi _{w}}i\mathfrak{u}%
_{\alpha }.
\end{equation*}%
This is due to the fact that the tangent space at  $wH_{0}$ is given by
\begin{equation*}
T_{wH_{0}}\mathcal{O}\left( H_{0}\right) =\sum_{\alpha \in \Pi }\mathfrak{g}%
_{\alpha }
\end{equation*}%
which intersects  $V_{w}$ at $%
\mathfrak{u}_{\alpha }$ and $i\mathfrak{u}_{\alpha }$,  showing that 
\begin{equation*}
\dim _{\mathbb{R}}\Delta _{w}\left( wH_{0}\right) =\frac{1}{2}\dim _{\mathbb{%
		R}}\mathcal{O}\left( H_{0}\right) .
\end{equation*}%
Hence, $\Delta _{w}\left( wH_{0}\right) $ is a Lagrangian
subspace. It follows that $\dim _{\mathbb{R}}\Delta _{w}\left(
wH_{0}\right) \geq \dim \Delta _{w}\left( x\right) $, $x\in \mathcal{O}%
\left( H_{0}\right) $, since the subspaces  $\Delta _{w}\left(
x\right) $ are isotropic for  $\Omega $. 

We will parametrize   $\Delta _{w}$
around $wH_{0}$ by Hamiltonian fields. 

Given $X\in \mathfrak{g}$ define the real height function   (with respect to the 
inner product $B_{\tau }$) $f_{X}:\mathcal{O}\left( H_{0}\right)
\rightarrow \mathbb{R}$ by
\begin{equation*}
f_{X}\left( x\right) =B_{\tau }\left( X,x\right) .
\end{equation*}%
Denote by $\ham f_{X}$ the Hamiltonian field of  $f_{X}$ with respect to 
$\Omega $ and by  $\grad f_{X}$ its gradient with respect to 
$B_{\tau }$ (both $\Omega $ and $B_{\tau }$ are restricted to  $%
\mathcal{O}\left( H_{0}\right) $). By definition, if  $v\in T_{x}%
\mathcal{O}\left( H_{0}\right) $ then
\begin{equation*}
\left( df_{X}\right) _{x}\left( v\right) =\Omega \left( v,\ham %
f_{X}\left( x\right) \right) =B_{\tau }\left( v,\grad f_{X}\left(
x\right) \right) .
\end{equation*}%
The formula $\Omega \left( X,Y\right) =B_{\tau }\left( X,iY\right) $,
guaranties that 
\begin{equation*}
\Omega \left( v,\ham f_{X}\left( x\right) \right) =B_{\tau }\left( v,i%
\ham f_{X}\left( x\right) \right) =B_{\tau }\left( v,\grad
f_{X}\left( x\right) \right) .
\end{equation*}%
Since this equality holds for all  $v\in T_{x}\mathcal{O}\left( H_{0}\right) 
$ it follows that 
$$\ham f_{X}\left( x\right) =-i\grad f_{X}\left(
x\right) \text{,}$$
for all $x\in \mathcal{O}\left( H_{0}\right) $.

	A basis for  $\Delta _{w}\left( wH_{0}\right) $ is given by   $\ham %
	f_{X}\left( iH_{0}\right) $ with  $X$ belonging to 
	\begin{equation*}
	\{iA_{\alpha },S_{\alpha }:\alpha \in \Pi _{w}\}\cup \{A_{\alpha
	},iS_{\alpha }:\alpha \in \Pi ^{+}\setminus \Pi _{w}\}
	\end{equation*}%
	where  $A_{\alpha }=X_{\alpha }-X_{-\alpha }$ and $S_{\alpha }=X_{\alpha
	}+X_{-\alpha }$. 
	
	Moreover, these Hamiltonian  fields  are tangent to the distribution
	$\Delta _{w}$. 
%
%


\section{Real Lagrangian thimbles}\label{RLT}

 Let $\mathfrak{u}$ be the 
	Lie algebra of $K$.
We first we recall the following result. 
\begin{proposition}
	\label{propesptangraph}  \cite[Prop 6.2]{GGSM2} The tangent space to $\Gamma%
	\left( k\circ R_{w_{0}}\right) $ at $\left( x,y\right) =\left( x,k\circ
	R_{w_{0}}\left( x\right) \right) $ is given by 
	\begin{equation*}
	\{\left( A,\mathrm{Ad}\left( k\right) A\right) ^{\sim }\left( x,k\circ
	R_{w_{0}}\left( x\right) \right) :A\in \mathfrak{u}\}
	\end{equation*}%
	where $\left( A,\mathrm{Ad}\left( k\right) A\right) ^{\sim }$ is the vector
	field on $\mathbb{F}_{H_{0}}\times \mathbb{F}_{H_{0}^{\ast }}=\mathbb{F}%
	_{\left( H_{0},H_{0}^{\ast }\right) }$ induced by $\left( A,\mathrm{Ad}%
	\left( k\right) A\right) \in \mathfrak{u}\times \mathfrak{u}$.
	\end{proposition}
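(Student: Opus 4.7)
The plan is to exploit the fact that right translation $R_{w_0}$ is equivariant under the left $K$-action. First I would realise $\mathbb{F}_{H_0}$ and $\mathbb{F}_{H_0^\ast}$ as the quotients $G/P_\Theta$ and $G/P_{\Theta^\ast}$, note that both carry the left $K$-action by translation and that right multiplication by $w_0$ commutes with it, giving $R_{w_0}(g \cdot x) = g \cdot R_{w_0}(x)$ for every $g \in K$. From this I would derive the twisted equivariance
\begin{equation*}
(k \circ R_{w_0})(g \cdot x) \;=\; k g \cdot R_{w_0}(x) \;=\; (k g k^{-1}) \cdot (k \circ R_{w_0}(x)),
\end{equation*}
so that $k \circ R_{w_0}$ intertwines the $K$-action on the source with the $K$-action on the target twisted by conjugation by $k$.

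Next I would differentiate this identity in $g$ at $g = e$ along a vector $A \in \mathfrak{u}$. Using $\mathrm{Ad}(k) A = \frac{d}{dt}\big|_{t=0} k \exp(tA) k^{-1}$, the left-hand side becomes $d(k \circ R_{w_0})_x(A^\sim(x))$ while the right-hand side becomes $(\mathrm{Ad}(k)A)^\sim(k \circ R_{w_0}(x))$. Since $\mathbb{F}_{H_0}$ is a homogeneous $K$-space, every tangent vector at $x$ has the form $A^\sim(x)$ for some $A \in \mathfrak{u}$, so this computes the full differential of $k \circ R_{w_0}$ at $x$.

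To finish, tangent vectors to $\Gamma(k \circ R_{w_0})$ at $(x, k \circ R_{w_0}(x))$ are precisely pairs $(v, d(k \circ R_{w_0})_x(v))$; substituting $v = A^\sim(x)$ gives
\begin{equation*}
\bigl(A^\sim(x),\,(\mathrm{Ad}(k)A)^\sim(k \circ R_{w_0}(x))\bigr) \;=\; (A, \mathrm{Ad}(k)A)^\sim(x, k \circ R_{w_0}(x)),
\end{equation*}
which is the claimed description. The main subtlety is purely bookkeeping: one must keep straight which flag each $K$-action lives on, so that the commutation of the left $K$-action with right multiplication by $w_0$ is unambiguous and the chain rule unwinds correctly. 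Once that is set up, the proposition follows from a one-line infinitesimal computation.
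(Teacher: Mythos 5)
Your proof is correct. The chain of ideas --- (i) $R_{w_0}$ is $K$-equivariant because right translation commutes with left translation, (ii) hence $k\circ R_{w_0}$ satisfies the twisted equivariance $(k\circ R_{w_0})(g\cdot x) = (kgk^{-1})\cdot(k\circ R_{w_0}(x))$, (iii) differentiating at $g=e$ along $A\in\mathfrak u$ gives $d(k\circ R_{w_0})_x\bigl(A^\sim(x)\bigr) = (\mathrm{Ad}(k)A)^\sim\bigl(k\circ R_{w_0}(x)\bigr)$, and (iv) transitivity of the compact group action on $\mathbb F_{H_0}$ shows that the vectors $A^\sim(x)$ exhaust $T_x\mathbb F_{H_0}$ --- is complete and rigorous. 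Note that the paper under review does not supply its own proof of this proposition: it is imported verbatim from \cite[Prop.\ 6.2]{GGSM2}, so there is no in-text argument to compare against. Your argument is the expected one; the only place where one should pause is step (iv), which is precisely why the induced fields come from $\mathfrak u\times\mathfrak u$ rather than $\mathfrak g\times\mathfrak g$: the compact form $\mathfrak u$ already generates a transitive action on the flag, and restricting to it is what makes the parametrization of $T_{(x,y)}\Gamma(k\circ R_{w_0})$ both surjective and, by a dimension count, a bijection. You might add that dimension count explicitly --- $\dim_{\mathbb R}\Gamma(k\circ R_{w_0}) = \dim_{\mathbb R}\mathbb F_{H_0} = \dim\mathfrak u - \dim(\mathfrak u\cap\mathfrak p_\Theta)$, matching the dimension of the image of $A\mapsto (A,\mathrm{Ad}(k)A)^\sim(x,y)$ --- but the statement as given only asserts the set-equality you proved, so this is optional.
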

	
For Lefschetz fibrations on an adjoint orbit  $\mathcal{O}\left( H_{0}\right) $ we can obtain Lagrangian submanifolds as graphs of symplectic maps on the corresponding flag. The idea of our construction is based on the following generalities. Let $f:N\to \mathbb{C}$ be a Lefschetz fibration where the total space $N$ is a Hermitian manifold with Hermitian metric $h$, complex structure $J$, and K\"{a}hler form $\Omega$. Set $%
f=f_{1}+if_{2}$ and let $V$ be a Lagrangian submanifold which 
contains a critical point $x$ of $f$. Let $g=g_{1}+ig_{2}$ be the restriction 
of $f$ to $V$. We define following gradient vector fields
$$ F_{1}=\grad f_{1},\quad F_{2}=\grad f_{2},\quad G_{1}=\grad
	g_{1},\quad \textnormal{and}\quad G_{2}=\grad g_{2}.$$
Since $f$ is a holomorphic function, $df\left( Jv\right) =idf\left(
v\right) $ for all $v\in TN$. This means 
\begin{equation*}
df_{1}\left( Jv\right) +idf_{2}\left( Jv\right) =idf_{1}\left( v\right)
-df_{2}\left( v\right),
\end{equation*}%
hence $df_{2}\left( v\right) =-df_{1}\left( Jv\right) $. That is,
$h\left(F_{2},v\right) =-h\left( F_{1},Jv\right) =h\left(
JF_{1},v\right) $  which shows that
\begin{equation*}
F_{2}=JF_{1}\qquad F_{1}=JF_{2}.
\end{equation*}%
From these equalities it follows that  $x$ is a critical point of $f$ if and only if
$x$ is a critical point of both $f_{1}$ and $f_{2}$. The Hessians of $f_{1}$ and $%
f_{2}$ at the critical point $x$ are related as follows. If $A$ and 
$B$ are vector fields, then
\begin{eqnarray*}
	\mathrm{Hess}f_{1}\left( A,B\right) &=&BAf_{1}=Bh\left( F_{1},A\right) \\
	&=&Bh\left( F_{2},JA\right) =B\left( JA\right) f_{2} \\
	&=&\mathrm{Hess}f_{2}\left( JA,B\right) .
\end{eqnarray*}
If  $f$ has isolated critical points, then both $f_{1}$ and $f_{2}$ are
Morse functions. The relation between the Hessians shows
that at every critical point of $f$ the
number of  positive eigenvalues of the Hessian equals the number of  negative eigenvalues. 
In fact, if $\mathrm{Hess}f_{1}\left( A,A\right) >0$ then
\begin{equation*}
\mathrm{Hess}f_{1}\left( JA,JA\right) =\mathrm{Hess}f_{2}\left( -A,JA\right)
=-\mathrm{Hess}f_{2}\left( JA,A\right) =-\mathrm{Hess}f_{1}\left( A,A\right)
\end{equation*}%
hence the positive definite and negative definite parts have the same dimension.\\

To obtain the relation between  $F_{i}$ and $G_{i}$ we observe that, since
$V$ is a  Lagrangian submanifold, the tangent space to  $N$ at a 
point $y\in V$ decomposes into 
\begin{equation*}
T_{y}N=T_{y}V\oplus JT_{y}V\qquad y\in V,
\end{equation*}%
as $JT_{y}V$ is the orthogonal complement  with respect to the Hermitian
metric $M\left( \cdot ,\cdot \right) $, of $T_{y}V$. Indeed, if $u,v\in
T_{y}V$ then
\begin{equation*}
M\left( u,Jv\right) =-\Omega \left( u,v\right) =0
\end{equation*}%
therefore the subspaces $T_{y}V$ and $JT_{y}V$ are orthogonal
and  have the same dimension, thus are complementary.
Consequently, the following relation between $F_{i}$ and $G_{i}$
holds on points of  $V$.

\begin{proposition}\label{fg}
	If $y\in V$ then $F_{1}\left( y\right) =G_{1}\left( y\right)
	-JG_{2}\left( y\right) $ and $F_{2}\left( y\right) =G_{2}\left( y\right)
	-JG_{1}\left( y\right) $.
\end{proposition}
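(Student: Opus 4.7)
The plan is to exploit the orthogonal splitting $T_yN = T_yV \oplus JT_yV$ established just before the statement (which uses that $V$ is Lagrangian together with $\Omega(u,v) = -M(u,Jv)$) and to pin down each summand of $F_i(y)$ separately by testing against vectors in $T_yV$ and against vectors in $JT_yV$. Throughout I will use two Cauchy-Riemann-type identities $df_1\circ J = -df_2$ and $df_2\circ J = df_1$, which are simply the real and imaginary parts of the holomorphicity relation $df(Jv) = i\,df(v)$ used earlier to derive $F_2 = JF_1$.

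For the first identity, write $F_1(y) = A + JB$ with $A, B \in T_yV$. To identify $A$, pair with an arbitrary $u\in T_yV$: since $g_1 = f_1|_V$ and the induced metric on $V$ is the restriction of $h$, we have $h(F_1, u) = df_1(u) = dg_1(u) = h(G_1, u)$, while $h(JB, u) = 0$ by the orthogonal decomposition; hence $A = G_1$. To identify $B$, pair with $Ju \in JT_yV$: the Cauchy-Riemann identity gives
\begin{equation*}
h(F_1, Ju) = df_1(Ju) = -df_2(u) = -dg_2(u) = -h(G_2, u),
\end{equation*}
while $h(A, Ju) = 0$ and $h(JB, Ju) = h(B,u)$ because $J$ is an $h$-isometry with $J^2 = -\id$. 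Thus $h(B,u) = -h(G_2,u)$ for every $u\in T_yV$, so $B = -G_2$, and assembling gives $F_1(y) = G_1(y) - JG_2(y)$.

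The second identity follows by the same decomposition argument with the roles of $f_1, g_1$ and $f_2, g_2$ exchanged, using the companion identity $df_2\circ J = df_1$ to compute the $JT_yV$-component. Alternatively, once the first equation is in hand, one may simply apply $J$ to it and invoke the relation between $F_1$ and $F_2$ noted above to recover the formula for $F_2$. There is no substantive obstacle here: the content is a clean orthogonal-projection argument, and the only point that requires care is the sign bookkeeping for $J$ viewed as an isometry of the real inner product $h$, i.e.\ the identities $h(JX, JY) = h(X, Y)$ and $h(X, JY) = -h(JX, Y)$.
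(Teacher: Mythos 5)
Your argument for the $F_1$ identity is essentially the paper's own proof: decompose $F_1(y)$ against the orthogonal splitting $T_yN = T_yV \oplus JT_yV$, pin down the $T_yV$-component by testing against $T_yV$ (using $g_1 = f_1|_V$), and pin down the $JT_yV$-component by testing against $JT_yV$ via the Cauchy--Riemann relation $df_1\circ J = -df_2$. Both yield $F_1 = G_1 - JG_2$.

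For the $F_2$ identity, however, you defer to ``the same argument with roles exchanged'' or to ``apply $J$ and invoke $F_2=JF_1$,'' and you should actually carry one of these out, because there is a sign subtlety you would then need to confront. Using the companion relation $df_2\circ J = df_1$, the $JT_yV$-test gives $h(B',u) = df_2(Ju) = df_1(u) = dg_1(u) = h(G_1,u)$, so $B' = +G_1$ and one obtains $F_2 = G_2 + JG_1$. The alternative route gives the same thing: $JF_1 = J(G_1 - JG_2) = JG_1 + G_2$. Both disagree with the proposition's stated formula $F_2 = G_2 - JG_1$. (A quick sanity check in $N = \mathbb{C}$ with $f(z)=z$, $V = \mathbb{R}$, confirms $F_2 = \partial_y = G_2 + JG_1$, not $G_2 - JG_1$.) The paper's own proof of this half switches to $F_2 = -JF_1$, which contradicts the relation $F_2 = JF_1$ derived a few lines earlier, and then compounds this with a further algebraic slip in expanding $-J(G_1 - JG_2)$, so the sign issue is present in the source as well. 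You should not leave this half as ``the same argument''; work it through and note that it produces $+JG_1$, so either the convention for $J$ in the metric must be revisited or the statement of the proposition needs a sign correction.
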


\begin{proof}
	For the case of  $F_{1}$ take the decomposition $F_{1}\left( y\right)
	=u+Jv\in T_{y}V\oplus JT_{y}V$. Since $g_{1}$ is the restriction of $%
	f_{1}$ to $V$, it follows that $\left( df_{1}\right) _{y}\left( w\right) =\left(
	dg_{1}\right) _{y}\left( w\right) $ if $w\in T_{y}V$. Therefore, for $w\in
	T_{y}V$ 
	\begin{eqnarray*}
		\left( dg_{1}\right) _{y}\left( w\right) &=&\left( df_{1}\right) _{y}\left(
		w\right) =M\left( F_{1}\left( y\right) ,w\right) \\
		&=&M\left( u+v,w\right) =M\left( u,w\right)
	\end{eqnarray*}%
	and we see that $u=G_{1}\left( y\right) $. Now take $Jw\in JT_{y}V$.
	So, 
	\begin{eqnarray*}
		\left( df_{1}\right) _{y}\left( Jw\right) &=&-\left( df_{2}\right)
		_{y}\left( w\right) =-\left( dg_{2}\right) _{y}\left( w\right) \\
		&=&-M\left( G_{2}\left( y\right) ,w\right)
	\end{eqnarray*}%
	and $M\left( F_{1}\left( y\right) ,Jw\right) =-M\left(
	G_{2}\left( y\right) ,w\right) $, that is, $$M\left( v,w\right) =M\left(
	Jv,Jw\right) =-M\left( G_{2}\left( y\right) ,w\right) \text{.}$$ Since $w$ is
	arbitrary, it follows that $v=-G_{2}\left( y\right) $.
	Thus, for $y\in V$\\
	$$F_{2}=-JF_{1}=-J\left( G_{1}-JG_{2}\right) =G_{2}-JG_{1}\text{.}$$ 
\end{proof}

The expressions from  proposition \ref{fg}  show that if $G_{2}=0$, then
$F_{1}=G_{1}$ and, consequently $F_{1}$ is tangent to $V$. It follows that

\begin{corollary}
	If the imaginary part is constant on the  Lagrangian subvariety $V$, then
	$\grad f_{1}$ is tangent to $V$.
\end{corollary}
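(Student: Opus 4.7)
The plan is to read the conclusion directly off of Proposition \ref{fg}. The decomposition established there, namely $F_{1}(y)=G_{1}(y)-JG_{2}(y)$ for $y\in V$, was obtained precisely by splitting $F_{1}$ along $T_{y}N=T_{y}V\oplus JT_{y}V$; so to show that $F_{1}$ is tangent to $V$ along $V$ it suffices to show that the normal component $-JG_{2}$ vanishes identically on $V$, i.e.\ that $G_{2}\equiv 0$ on $V$.

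First I would unpack what the hypothesis ``the imaginary part is constant on $V$'' means in the notation of the preceding discussion. Since $g=g_{1}+ig_{2}$ is defined as the restriction $f\vert_{V}$, the imaginary part of $f$ being constant on $V$ is the statement $g_{2}\equiv \mathrm{const}$. Then $dg_{2}=0$, and since $G_{2}=\grad g_{2}$ is characterized by $(dg_{2})_{y}(w)=M(G_{2}(y),w)$ for every $w\in T_{y}V$, the nondegeneracy of the metric on $V$ forces $G_{2}(y)=0$ for all $y\in V$.

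Substituting $G_{2}=0$ into the formula of Proposition \ref{fg} gives $F_{1}(y)=G_{1}(y)$ for every $y\in V$. Since $G_{1}=\grad g_{1}$ is by construction a vector field on $V$, it lies in $T_{y}V$; hence so does $F_{1}(y)=(\grad f_{1})(y)$, which is exactly the claim.

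There is essentially no obstacle here: the corollary is a one-line consequence of Proposition \ref{fg} once one recognizes that ``imaginary part constant on $V$'' is equivalent to $G_{2}\equiv 0$. The only point that deserves a small check is that $V$ being a Lagrangian \emph{subvariety} (rather than a smooth submanifold) does not spoil the application of Proposition \ref{fg}; this is harmless because the statement only needs to hold at smooth points of $V$, which is where the tangent space decomposition $T_{y}N=T_{y}V\oplus JT_{y}V$ used in the proposition is available.
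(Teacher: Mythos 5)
Your proposal is correct and matches the paper's own (implicit) argument exactly: the paper derives the corollary in one sentence by observing that $G_{2}=0$ forces $F_{1}=G_{1}$ via Proposition \ref{fg}, which is precisely the chain $g_{2}$ constant $\Rightarrow$ $dg_{2}=0$ $\Rightarrow$ $G_{2}\equiv 0$ $\Rightarrow$ $F_{1}=G_{1}\in TV$ that you spell out.
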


Consequently, we obtain the following method of constructing stable and 
unstable manifolds of  $\grad f_{1}$ at a critical point $x$ (in
the case of Morse functions).

\begin{proposition}
	\label{proplagrandest}Let $V$ be a Lagrangian submanifold that contains 
	a critical point  $x$ of the function $f=f_{1}+if_{2}$ that defines a 
	Lefschetz fibration. Suppose that  $f_{2}$ is constant on  $V$ and that 
	the restriction of the  Hessian $\mathrm{Hess}\left( f\right) \left( x\right) $
	to the tangent subspace $T_{x}V$ is negative  definite (respectively
	positive definite). Then, the stable (respectively unstable)
	manifold of $g_{1}$ in $V$ is an open subset of the stable 
	(respectively unstable) manifold of  $f_{1}$.
\end{proposition}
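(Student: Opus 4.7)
The plan is to exploit the hypothesis that $f_{2}$ is constant on $V$, combine it with Proposition \ref{fg} to show that $\grad f_{1}$ is tangent to $V$, and then conclude by a dimension count. I treat only the negative definite case, since the positive definite case follows by applying the same argument to $-f$.

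First, because $f_{2}|_{V}$ is constant the restricted gradient $G_{2}=\grad g_{2}$ vanishes identically on $V$. Substituting this into the first identity of Proposition \ref{fg} gives
\begin{equation*}
F_{1}(y)=G_{1}(y)-JG_{2}(y)=G_{1}(y)\qquad\text{for every }y\in V.
\end{equation*}
Thus $\grad f_{1}$ is tangent to $V$ along $V$, and the integral curves of $-\grad f_{1}$ starting at points of $V$ remain in $V$ and coincide with the integral curves of $-\grad g_{1}$. Consequently, any point of $V$ whose trajectory under $-\grad g_{1}$ converges to $x$ is also a point of $V$ whose trajectory under $-\grad f_{1}$ converges to $x$, which gives the inclusion of the stable manifold of $g_{1}$ at $x$ into the stable manifold of $f_{1}$ at $x$.

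Openness is then a dimension count. The hypothesis that $\mathrm{Hess}(f)(x)|_{T_{x}V}$ is negative definite, together with the identity $F_{1}|_{V}=G_{1}$ just established, forces $\mathrm{Hess}(g_{1})(x)$ to be negative definite, so the stable manifold of $g_{1}$ at $x$ in $V$ has real dimension $\dim_{\mathbb{R}} V$. For $f_{1}$, the Hessian comparison $\mathrm{Hess}f_{1}(JA,JA)=-\mathrm{Hess}f_{1}(A,A)$ already derived in the paragraphs preceding the proposition, a direct consequence of the holomorphicity of $f$, shows that the number of negative eigenvalues of $\mathrm{Hess}(f_{1})(x)$ equals the number of its positive eigenvalues and therefore equals $\dim_{\mathbb{C}} N$. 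Since $V$ is Lagrangian, $\dim_{\mathbb{R}} V=\dim_{\mathbb{C}} N$, so the two stable manifolds have the same real dimension. An inclusion of smooth immersed submanifolds of the same dimension through a common point is an open inclusion in a neighborhood of that point, and invariance of both submanifolds under the flow of $-\grad f_{1}$ propagates this openness to the entire stable manifold of $g_{1}$.

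The main obstacle is essentially bookkeeping rather than any substantive calculation: one must verify that the Hessian comparison in the preceding paragraphs forces the equal-eigenvalue count at every critical point, and that the phrase \emph{open subset} is to be interpreted with respect to the immersed-manifold structure provided by the stable manifold theorem. Once this is in place, Proposition \ref{fg} together with the Hessian relation carries the entire argument.
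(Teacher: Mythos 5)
Your argument follows essentially the same route as the paper: use Proposition \ref{fg} together with $G_{2}=0$ to conclude $F_{1}=G_{1}$ on $V$, observe that the gradient flows then coincide along $V$, and deduce that the stable manifold of $g_{1}$ sits inside the stable manifold of $f_{1}$. You additionally spell out the dimension count needed for openness, i.e.\ that the stable manifold of $g_{1}$ has dimension $\dim_{\mathbb{R}}V$ while the index relation $\mathrm{Hess}\,f_{1}(JA,JA)=-\mathrm{Hess}\,f_{1}(A,A)$ forces the stable manifold of $f_{1}$ to have dimension $\dim_{\mathbb{C}}N=\dim_{\mathbb{R}}V$; the paper leaves this comparison implicit, so making it explicit is a genuine improvement.

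There is, however, a sign inconsistency that you should repair. You declare the stable manifolds to be taken with respect to the flows of $-\grad f_{1}$ and $-\grad g_{1}$, but you then assert that negative definiteness of $\mathrm{Hess}(g_{1})(x)$ makes the stable manifold of $g_{1}$ full-dimensional in $V$. Under the $-\grad$ convention a negative definite Hessian makes $x$ a \emph{repeller}, so that stable manifold would be the single point $\{x\}$, and the dimension count collapses. The paper's proof, which states that $x$ is an attractor for $G_{1}=\grad g_{1}$, is using the flow of the \emph{positive} gradient; under that convention a negative definite Hessian does give a full-dimensional stable manifold, and everything in your argument then goes through as written. So either switch $-\grad$ to $\grad$ throughout, or replace ``stable'' with ``unstable'' under your $-\grad$ convention. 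Once this notational fix is made, the proof is correct and complete.
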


\begin{proof}
	The Hessian of $g_{1}$ is the restriction to  $T_{x}V$ of the Hessian of
	$f_{1}$. The hypothesis guaranties that  the fixed point $x$ is an attractor
	(respectively repeller) of $G_{1}=\grad g_{1}$. Consequently, in the 
	negative definite case, the stable manifold of $G_{1}$ is and open subset  
	$V$ that contains $x$. In this open set $F_{1}$ coincides with $G_{1}$, since
	by hypothesis $f_{2}$ is constant on $V$, that is, $G_{2}=0$.
	Therefore, the stable manifold of $G_{1}$ is contained in the stable manifold of 
	$F_{1}$. A similar argument handles the positive definite case.
\end{proof}

Since the levels of a Morse function in the neighborhood of an 
attracting or repelling singularity are spheres (by the 
Morse lemma), this proposition has the following consequence.

\begin{corollary}
	In the setup of proposition \ref{proplagrandest} consider a level
	$g_{1}^{-1}\{c\}=f_{1}^{-1}\left( c\right) \cap V$ with $c$ 
	near $g_{1}\left( x\right) =f_{1}\left( x\right) $ such that  $c<g_{1}\left(
	x\right) $ in the negative definite case and $c>g\left( x\right) $ in the
	positive definite case. Then, $g_{1}^{-1}\{c\}$ is a sphere of dimension
	$\dim V-1$.
\end{corollary}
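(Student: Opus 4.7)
The plan is to derive the corollary as an immediate application of the classical Morse lemma (as recalled in the first section of the paper) to $g_1$ viewed as a real-valued function on the Lagrangian submanifold $V$.

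First I would verify that $x$ is a nondegenerate critical point of $g_1$ with definite Hessian. Since $f_2$ is assumed constant on $V$, the restriction $g_2$ is constant, and therefore $\mathrm{Hess}(g_1)(x)$ equals the restriction of $\mathrm{Hess}(f_1)(x)$ to $T_xV$, which in turn coincides with the restriction of $\mathrm{Hess}(f)(x)$ to $T_xV$. By hypothesis of Proposition \ref{proplagrandest} this restriction is definite, so $x$ is a nondegenerate critical point of $g_1$ on $V$ of index $\dim V$ in the negative definite case and of index $0$ in the positive definite case.

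Next I would invoke the Morse lemma on $V$: there exist smooth local coordinates $(y_1,\dots,y_n)$ with $n=\dim V$ on a neighborhood $U\subset V$ of $x$, vanishing at $x$, in which
\[
g_1 \;=\; g_1(x) \;\mp\; \bigl(y_1^2+\cdots+y_n^2\bigr),
\]
with the minus sign corresponding to the negative definite case and the plus sign to the positive definite case. For $c$ sufficiently close to $g_1(x)$ on the correct side, the level set $g_1^{-1}(\{c\})=f_1^{-1}(c)\cap V$ is, inside this chart, the round sphere $\{y:\;y_1^2+\cdots+y_n^2=|c-g_1(x)|\}$ of dimension $n-1$.

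The only substantive point to check is that, by shrinking the admissible range of $c$, the \emph{entire} level $g_1^{-1}(\{c\})$ sits inside the Morse chart; this is standard, since the sublevel sets (respectively superlevel sets) of a Morse function at an attracting (respectively repelling) critical point form a fundamental system of neighborhoods. I do not foresee any genuine obstacle: the work is done by Proposition \ref{proplagrandest} which provides the definite Hessian, and the conclusion is a direct consequence of Morse's normal form.
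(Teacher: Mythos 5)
Your proof is correct and follows essentially the same route as the paper: the paper also deduces the corollary directly from the Morse lemma applied to $g_1$ on $V$, using that Proposition \ref{proplagrandest} gives a definite Hessian so that $x$ is an attractor or repeller. You simply spell out the Morse normal form and the (standard) point about the level lying entirely in the chart, which the paper leaves implicit.
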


The sphere  $g_{1}^{-1}\{c\}$ in this corollary is a 
Lagrangian  submanifold of the level $f^{-1}\{c\}$ (since in proposition  \ref%
{proplagrandest} we took the hypothesis that  $g_{2}$ is  constant).

The next goal is to construct a Lagrangian thimble having as boundary the 
sphere $g_{1}^{-1}\{c\}$ contained in the Lagrangian submanifold $V$. For this
observe that for any $y\in N$,  the symplectic orthogonal of the fibre 
$\Phi _{y}=f^{-1}\{f\left( y\right) \}$ is generated by $F_{1}=
\grad f_{1}$ and $J\grad f_{1}=-\grad f_{2}=-F_{2}$. In fact, $%
F_{1}\left( y\right) $ is the metric orthogonal of     $T_{y}\Phi _{y}$ since
it is  gradient. However, $\Phi _{y}$ is a complex submanifold, thus
$\Omega \left( F_{1}\left( y\right) ,v\right) =M\left(
F_{1}\left( y\right) ,Jv\right) =0$ if $v\in T_{y}\Phi _{y}$. It follows that
\begin{equation*}
\Omega \left( JF_{1}\left( y\right) ,v\right) =M\left( JF_{1}\left( y\right)
,Jv\right) =M\left( F_{1}\left( y\right) ,v\right) =0
\end{equation*}%
if $v\in T_{y}\Phi _{y}$, which shows that  $F_{1}\left( y\right) $ and $%
JF_{1}\left( y\right) $ generate the  symplectic orthogonal to $\Phi _{y}$.

Consequently we obtain the following Lagrangian thimble for  $f$.

\begin{theorem}\label{stable/unstable}
	In the setup of proposition \ref{proplagrandest} take $c$ 
	near $f_{1}\left( x\right) =g_{1}\left( x\right) $. We have that
	\begin{equation*}
	g_{1}^{-1}\left[ c,g_{1}\left( x\right) \right] =f_{1}^{-1}\left[
	c,f_{1}\left( x\right) \right] \cap V \quad \text{in the negative definite case, or } 
	\end{equation*}%
	  \begin{equation*}g_{1}^{-1}\left[ g_{1}\left( x\right) ,c%
	\right] =f_{1}^{-1}\left[ f_{1}\left( x\right) ,c\right] \cap V \quad \text{in the 
	positive definite case}\phantom{xxx} \end{equation*}
	 is homeomorphic to a closed ball in  $\mathbb{R}^{\dim
		V}.$ This ball is a  Lagrangian thimble.
\end{theorem}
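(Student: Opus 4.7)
The plan is to reduce the statement to the Morse theory of the real function $g_{1}$ restricted to $V$, and then combine this with the Lagrangian property of $V$ and the symplectic connection on the Lefschetz fibration $f$.

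First, the set equality $g_{1}^{-1}[c,g_{1}(x)] = f_{1}^{-1}[c,f_{1}(x)] \cap V$ is immediate from $g_{1} = f_{1}|_{V}$, which follows from $g = f|_{V}$ by taking real parts. So the geometric content lies in identifying this set as a ball and as a thimble. By proposition \ref{proplagrandest}, the Hessian of $g_{1}$ at $x$ is the restriction of $\mathrm{Hess}(f)(x)$ to $T_{x}V$, which is negative definite by hypothesis. Hence $x$ is a nondegenerate local maximum of $g_{1}$. Applying the Morse lemma to $g_{1}$, there exist local coordinates $(y_{1},\dots,y_{n})$ on $V$ near $x$, with $n = \dim V$, in which $g_{1} = g_{1}(x) - \sum y_{i}^{2}$. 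For $c$ sufficiently close to $g_{1}(x)$ from below, the set $g_{1}^{-1}[c,g_{1}(x)]$ lies in this chart and coincides with the closed ball $\{\sum y_{i}^{2} \le g_{1}(x)-c\} \subset \mathbb{R}^{\dim V}$.

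Next, I would check that this ball is a Lagrangian submanifold of $N$. It is contained in the Lagrangian submanifold $V$, so the pullback of $\Omega$ vanishes on it, making it isotropic. Since its real dimension equals $\dim V = \tfrac{1}{2}\dim_{\mathbb{R}}N$, it is automatically Lagrangian. The positive definite case is entirely symmetric, simply swapping the roles of stable and unstable, maximum and minimum.

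Finally, to identify this ball as a Lagrangian thimble in the Lefschetz-fibration sense of the introduction, I need to express it as the parallel transport of a vanishing cycle along a path in $\mathbb{C}$. Since $f_{2}$ is constant on $V$, the image of the ball under $f$ is the horizontal segment $\{t + i f_{2}(x) : c \le t \le f_{1}(x)\}$, and the fibers of $f$ over points of this segment meet the ball precisely in the Morse level sets $g_{1}^{-1}(t)$, which are spheres shrinking to $x$ as $t \to f_{1}(x)$. For the parallel-transport interpretation I would verify that the horizontal lift of $\partial/\partial t$ with respect to the symplectic connection is $F_{1}/\|F_{1}\|^{2}$: by the discussion preceding the theorem, $F_{1}$ and $JF_{1} = F_{2}$ span the symplectic orthogonal to the fiber, and one has $df_{1}(F_{1}) = \|F_{1}\|^{2}$ while $df_{2}(F_{1}) = M(F_{2},F_{1}) = -\Omega(F_{1},F_{1}) = 0$. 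The key step is now proposition \ref{fg}, which together with the hypothesis $G_{2}=0$ gives $F_{1} = G_{1}$ on $V$, so the horizontal flow starting in $V$ stays in $V$ and agrees with a reparametrization of $\grad g_{1}$. Hence the spheres $g_{1}^{-1}(t)$ are exactly the parallel transports of the vanishing cycle $g_{1}^{-1}(c)$ toward $x$, and the ball is the resulting thimble.

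The main obstacle is the last identification: the Morse-theoretic sublevel disc must be shown to coincide with the symplectic parallel-transport thimble. This hinges on the computation that the horizontal lift of the real direction in $\mathbb{C}$ is proportional to $F_{1}$ and that on $V$ this equals $G_{1}$; both rely crucially on $f_{2}$ being constant on $V$. Once this is in place, the theorem follows cleanly from the Morse lemma and the Lagrangian property of $V$.
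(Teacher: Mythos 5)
Your proposal is correct and follows essentially the same route as the paper's proof: you show the horizontal lift of the real direction is a positive multiple of $F_{1}$ (the paper writes $W = aF_{1} + bJF_{1}$ and finds $b = 0$, $a > 0$; you compute $a = 1/\|F_{1}\|^{2}$ explicitly), you use Proposition \ref{fg} with $G_{2} = 0$ to get $F_{1} = G_{1}$ on $V$, and you conclude the parallel transports follow the gradient flow of $g_{1}$ into the critical point $x$. Your only addition beyond the paper is an explicit appeal to the Morse lemma to identify $g_{1}^{-1}[c, g_{1}(x)]$ as a ball, which the paper leaves implicit; this is a reasonable point to spell out and does not change the argument.
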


\begin{proof}
	In the negative definite case  $g_{1}^{-1}[c,g_{1}\left( x\right) ]$ is the
	Lagrangian thimble obtained by parallel transport of the 
	Lagrangian sphere $g^{-1}\{c\}$ along the line segment $[c,g\left( x\right)
	]\subset \mathbb{R}$. In fact, if $s\in \lbrack c,g\left( x\right) ]$ and $%
	z\in g^{-1}\{s\}$ then the horizontal lift of the vector $d/dt$ is 
	a multiple of  $F_{1}\left( z\right) $. This happens because the 
	horizontal lift is a vector $W=aF_{1}\left( z\right) +bJF_{1}\left( z\right) $%
	, $a,b\in \mathbb{R}$, which satisfies $df_{z}\left( W\right) =\left(
	df_{1}\right) _{z}\left( W\right) +i\left( df_{2}\right) _{z}\left( W\right)
	=d/dt$, thus, $df_{z}\left( W\right) $ is real and therefore coincides with
	$\left( df_{1}\right) _{z}\left( W\right) $. This implies 
	$\left( df_{2}\right) _{z}\left( W\right) =0$, so, 
	\begin{eqnarray*}
		0 &=&M\left( F_{2},W\right) =-M\left( JF_{1}\left( z\right) ,aF_{1}\left(
		z\right) +bJF_{1}\left( z\right) \right) \\
		&=&-bM\left( JF_{1}\left( z\right) ,JF_{1}\left( z\right) \right)
	\end{eqnarray*}%
	consequently, $b=0$. In the negative definite case we have the coefficient $a>0$, since $%
	f_{1}$ grows in the direction of $F_{1}$.
	
	Therefore, the parallel transport of a point of  $g_{1}^{-1}\{c\}$ along 
	the segment $\left[ c,g_{1}\left( x\right) \right] $ follows the trajectories of 
	$F_{1}$ (reparametrized). Such trajectories converge to  $%
	x $, thus, the union of parallel transports of  $s\in \lbrack
	c,g\left( x\right) ]$ is the ball $g_{1}^{-1}\left[ c,g_{1}\left( x\right) %
	\right] =f_{1}^{-1}\left[ c,f_{1}\left( x\right) \right] \cap V$.
	
	The same argument works in the positive definite case,  with $-F_{1}$ in place of  $%
	F_{1}$.
\end{proof}

\begin{definition}\label{rlt}
	A Lagrangian thimble  inside  a stable or unstable of submanifold constructed 
	as in theorem \ref{stable/unstable} is  called a
	\textbf{real Lagrangian thimble}, since it is obtained by lifting of a real horizontal curve.
\end{definition}

\section{The potential and graphs}\label{PG}

The goal in this section is to analyze the behavior of the potential given by the height function  $%
f_{H}\left( x\right) =\langle x,H\rangle $ on   Lagrangian graphs. The
cases of interest here are the graphs of the composites  $m\circ
R_{w_{0}}$ with  $m$ in the torus $T=\exp \left( i\mathfrak{h}_{\mathbb{R}}\right) $%
.  Such  graphs all pass through the critical points of   $f_{H}$. In fact, 
in the product  $\mathbb{F}_{H_{0}}\times \mathbb{F}_{H_{0}^{\ast }}$ these
critical points are given by  $\left( wH_{0},ww_{0}H_{0}^{\ast }\right)
=\left( wH_{0},-wH_{0}\right) $. Since
\begin{equation*}
m\circ R_{w_{0}}\left( wH_{0}\right) =\mathrm{Ad}\left( m\right) \left(
ww_{0}H_{0}^{\ast }\right) =ww_{0}H_{0}^{\ast }
\end{equation*}%
we see that these pairs belong to  $\Gamma\left( m\circ
R_{w_{0}}\right) $.

The Hessian of $ f_{H} $ at a critical point is calculated
considering everything from the point of view of the adjoint orbit $\mathcal{O%
}\left( H_{0}\right) =\mathrm{Ad}\left( G\right) H_{0}$. In this case the field $%
\widetilde{A}$ induced by $A\in \mathfrak{g}$ is linear $\widetilde{A}=%
\mathrm{ad}\left( A\right) $. Therefore $\widetilde{A}f_{H}\left( x\right)
=\langle \lbrack A,x],H\rangle $ and the second derivative is  $\widetilde{B}%
\widetilde{A}f_{H}\left( x\right) =\langle \lbrack A,[B,x]],H\rangle $.
Thus, if $x=wH_{0}$ is a  critical point, then%
\begin{equation}
\mathrm{Hess}\left( f_{H}\right) \left( \widetilde{A}\left( x\right) ,%
\widetilde{B}\left( x\right) \right) =-\langle \lbrack
B,wH_{0}],[A,H]\rangle =-\langle \lbrack wH_{0},B],[H,A]\rangle .
\label{forhesswhzero}
\end{equation}

The goal now is to find the restriction of this Hessian to the tangent spaces
to the graphs  $\Gamma%
\left( m\circ R_{w_{0}}\right) $, $m\in T$ at the critical points. These tangent spaces 
were described in proposition \ref{propesptangraph} using the realization of the homogenous space as
an orbit inside the product $\mathbb{F%
}_{H_{0}}\times \mathbb{F}_{H_{0}^{\ast }}=\mathbb{F}_{\left(
	H_{0},H_{0}^{\ast }\right) }$. Such description must be translated to the viewpoint where
the homogeneous space is the adjoint orbit 
$\mathcal{O}\left( H_{0}\right) =\mathrm{Ad}\left( G\right) H_{0}$.
This translation will be made in the next  proposition.
First recall that from the point of view of the open orbit 
$G\cdot \left( H_{0},-H_{0}\right) \subset \mathbb{F}%
_{H_{0}}\times \mathbb{F}_{H_{0}^{\ast }}$ the critical points are $%
\left( wH_{0},ww_{0}H_{0}^{\ast }\right) =\left( wH_{0},-wH_{0}\right) $, $%
w\in \mathcal{W}$.

\begin{proposition}
	Let $m\in T=\exp \left( i\mathfrak{h}_{\mathbb{R}}\right) $ and consider $%
	\Gamma\left( m\circ R_{w_{0}}\right) $ as a 
	Lagrangian submanifold of  $\mathcal{O}\left( H_{0}\right) =\mathrm{Ad}\left( G\right)
	\cdot H_{0}$. Then the tangent space to $\Gamma\left( m\circ
	R_{w_{0}}\right) $ at the critical point $wH_{0}$, $w\in \mathcal{W}$, is
	generated by the vectors
	
	\begin{enumerate}
		\item $\widetilde{X}_{\alpha }\left( wH_{0}\right) -\mathrm{Ad}
			\left( m\right) \widetilde X_{-\alpha }\left( wH_{0}\right) =[X_{\alpha },wH_{0}]-[%
		\mathrm{Ad}\left( m\right) X_{-\alpha },wH_{0}]$ with $\alpha \left(
		wH_{0}\right) <0$ and
		\item $\widetilde{iX}_{\alpha }\left( wH_{0}\right) +{\mathrm{Ad}%
			\left( m\right) \widetilde{iX}_{-\alpha }}\left( wH_{0}\right) =i[X_{\alpha },wH_{0}]+i[%
		\mathrm{Ad}\left( m\right) X_{-\alpha },wH_{0}]$ with $\alpha \left(
		wH_{0}\right) <0$.
	\end{enumerate}
\end{proposition}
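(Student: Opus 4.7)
The approach is to apply Proposition \ref{propesptangraph} in the product model $\mathbb{F}_{H_0} \times \mathbb{F}_{H_0^*}$ and then transport the answer to the adjoint orbit via the open embedding $\mathcal{O}(H_0) \hookrightarrow \mathbb{F}_{H_0} \times \mathbb{F}_{H_0^*}$. Specializing Proposition \ref{propesptangraph} to $k = m \in T \subset K$ and to the critical point $(wH_0, -wH_0) = w \cdot (H_0, -H_0)$, the tangent space to $\Gamma(m \circ R_{w_0})$ is the image of the map $\mathfrak{u} \to T_{(wH_0, -wH_0)}(\mathbb{F}_{H_0} \times \mathbb{F}_{H_0^*})$ sending $A$ to $(A, \mathrm{Ad}(m)A)^\sim$.

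The key step is the identification of tangent spaces. At $(wH_0, -wH_0)$ the flag tangent spaces are $T_{wH_0}\mathbb{F}_{H_0} \cong \mathrm{Ad}(w)\mathfrak{n}^-$ and $T_{-wH_0}\mathbb{F}_{H_0^*} \cong \mathrm{Ad}(w)\mathfrak{n}^+$, so the product tangent space decomposes as $\mathrm{Ad}(w)\mathfrak{n}^- \oplus \mathrm{Ad}(w)\mathfrak{n}^+ = \bigoplus_{\alpha \in \Pi}\mathfrak{g}_\alpha$. Comparing the diagonal $G$-action on the product with the $\mathrm{Ad}$-action on the orbit---both compatible with the open embedding---shows that the identification sends a pair $(u, v) \in \mathrm{Ad}(w)\mathfrak{n}^- \oplus \mathrm{Ad}(w)\mathfrak{n}^+$ to the orbit tangent vector $[u + v, wH_0]$. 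Accordingly, for $A \in \mathfrak{u}$ the product vector $(A, \mathrm{Ad}(m)A)^\sim$ translates to $[A_- + (\mathrm{Ad}(m)A)_+, wH_0]$, with subscripts denoting projections onto $\mathrm{Ad}(w)\mathfrak{n}^-$ and $\mathrm{Ad}(w)\mathfrak{n}^+$ respectively.

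Now use the basis of $\mathfrak{u}$ given by $i\mathfrak{h}_{\mathbb{R}}$ together with $A_\alpha = X_\alpha - X_{-\alpha}$ and $iS_\alpha = i(X_\alpha + X_{-\alpha})$ for $\alpha > 0$. The Cartan part contributes nothing because $\mathrm{Ad}(m) iH = iH$ and $[iH, wH_0] = 0$. For $A = A_\alpha$, a case split on the sign of $\alpha(wH_0)$---equivalently on whether $X_\alpha$ or $X_{-\alpha}$ belongs to $\mathrm{Ad}(w)\mathfrak{n}^-$---selects exactly one nonzero term from each projection; summing and applying $[\cdot, wH_0]$ yields a nonzero scalar multiple of $[X_{\alpha'} - \mathrm{Ad}(m)X_{-\alpha'}, wH_0]$, where $\alpha' \in \{\alpha, -\alpha\}$ is the root with $\alpha'(wH_0) < 0$. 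A parallel computation with $A = iS_\alpha$ produces $i[X_{\alpha'} + \mathrm{Ad}(m)X_{-\alpha'}, wH_0]$. As $\alpha$ ranges over positive roots this accounts for all $2|\Pi^+| = |\Pi|$ basis vectors, with the indexing by $\alpha'(wH_0) < 0$ emerging automatically from the case analysis.

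The main obstacle is keeping track of the several realizations of the tangent space---as the abstract $G/B$ tangent $\mathrm{Ad}(w)\mathfrak{n}^-$, as the $K$-orbit tangent subspace of $\mathfrak{g}$, and as the adjoint orbit tangent $[\mathfrak{g}, wH_0]$---and verifying that the root-space decomposition correctly implements the passage between them. As a consistency check, setting $m = e$ collapses the formulas to the tangent space of the zero section $\Gamma(R_{w_0}) = K \cdot H_0$ described in Remark \ref{Rwo}.
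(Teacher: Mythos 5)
Your proof is correct and takes essentially the same approach as the paper: it applies Proposition \ref{propesptangraph} at the critical point $(wH_{0},-wH_{0})$, runs through the basis $\{i\mathfrak{h}_{\mathbb{R}},A_{\alpha},iS_{\alpha}\}$ of $\mathfrak{u}$, discards the Cartan contribution, and performs the same case analysis on the sign of $\alpha(wH_{0})$ to pick out the surviving root vector in each factor. Your explicit description of the transport $(u,v)\mapsto[u+v,wH_{0}]$ from $\mathrm{Ad}(w)\mathfrak{n}^{-}\oplus\mathrm{Ad}(w)\mathfrak{n}^{+}$ to the orbit tangent space is a helpful unpacking of the step the paper phrases more tersely as ``the isomorphism takes a field induced by an element of $\mathfrak{u}$ to an induced field.''
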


\begin{proof}
	By proposition \ref{propesptangraph} the tangent space to $%
	\Gamma\left( m\circ R_{w_{0}}\right) $ at the  critical point $\left(
	wH_{0},-wH_{0}\right) $ (seen as the orbit in the product) is generated by
	\begin{equation*}
	\left( A,\mathrm{Ad}\left( m\right) A\right) ^{\sim }\left(
	wH_{0},-wH_{0}\right) =\left( \widetilde{A}\left( wH_{0}\right) ,{%
		\mathrm{Ad}\left( m\right) \widetilde A}\left( -wH_{0}\right) \right)
	\end{equation*}%
	with $A\in \mathfrak{u}$.
	
	The real compact form  $\mathfrak{u}$ is generated by $i\mathfrak{h}_{%
		\mathbb{R}}$, $A_{\alpha }=X_{\alpha }-X_{-\alpha }$ and $Z_{\alpha }=i\left(
	X_{\alpha }+X_{-\alpha }\right) $ with $\alpha $ running through all roots.
	The field induced by an element of $i\mathfrak{h}_{\mathbb{R}}$
	vanishes at the critical point  $wH_{0}$ hence it suffices to consider the 
	fields induced by $A_{\alpha }$ and $Z_{\alpha }$.
	
	Choose a root  $\alpha $ such that $\alpha \left( wH_{0}\right) <0$. Then,
	in $\mathbb{F}_{H_{0}}$, $\widetilde{A}_{\alpha }\left( wH_{0}\right) =%
	\widetilde{X}_{\alpha }\left( wH_{0}\right) $ and $\widetilde{Z}_{\alpha
	}\left( wH_{0}\right) =\widetilde{iX}_{\alpha }\left( wH_{0}\right) $ since $%
	\widetilde{X}_{-\alpha }\left( wH_{0}\right) =0$.
	
	On the other hand, $\mathrm{Ad}\left( m\right) A_{\alpha }=\mathrm{Ad}\left(
	m\right) X_{\alpha }-\mathrm{Ad}\left( m\right) X_{-\alpha }$ and $\mathrm{Ad}%
	\left( m\right) Z_{\alpha }=\mathrm{Ad}\left( m\right) iX_{\alpha }+\mathrm{%
		Ad}\left( m\right) iX_{-\alpha }$ given that both $\mathrm{Ad}\left(
	m\right) X_{\pm \alpha }$ and $\mathrm{Ad}\left( m\right) iX_{\pm \alpha
	} $ belong to  $\mathfrak{g}_{\pm \alpha }$ since $\mathrm{Ad}\left(
	m\right) \mathfrak{g}_{\pm \alpha }=\mathfrak{g}_{\pm \alpha }$ (because $%
	m\in T$).
	
	Taking now the induced field on $\mathbb{F}_{H_{0}^{\ast }}$ and using 
	the fact that $\alpha \left( wH_{0}\right) <0$  we obtain that $\mathrm{Ad}\left( m\right) \widetilde{X}_{\alpha
	}\left( -wH_{0}\right) =0$ on $\mathbb{F}%
	_{H_{0}^{\ast }}$ (since $\alpha \left( -wH_{0}\right) >0$).
	Therefore $\mathrm{Ad}\left( m\right) \widetilde{A}_{\alpha }\left(
	-wH_{0}\right) =-\mathrm{Ad}\left( m\right) \widetilde{X}_{-\alpha }\left(
	-wH_{0}\right) $ and $\mathrm{Ad}\left( m\right) \widetilde{Z}_{\alpha }\left(
	-wH_{0}\right) =i\mathrm{Ad}\left( m\right) \widetilde{X}_{-\alpha }\left(
	-wH_{0}\right) $.
	
	Now, the isomorphism between $G\cdot \left( H_{0},-H_{0}\right) $ and $\mathcal{O%
	}\left( H_{0}\right) $ takes a field induced by an element of $\mathfrak{u}$ to an 
	induced field. Moreover, the isomorphism associates $\left(
	wH_{0},-wH_{0}\right) \in \mathbb{F}_{H_{0}}\times \mathbb{F}_{H_{0}^{\ast
	}} $ to $wH_{0}\in \mathcal{O}\left( H_{0}\right) $. This way, the
	isomorphism takes $\left( \widetilde{A}\left( wH_{0}\right) ,
		\mathrm{Ad}\left( m\right) \widetilde{A}\left( -wH_{0}\right) \right) $ to $\widetilde{A}%
	\left( wH_{0}\right) +\mathrm{Ad}\left( m\right) \widetilde{A}\left(
	wH_{0}\right) $ (where the former  $\widetilde{\cdot }$ means the field induced on
	$\mathbb{F}_{H_{0}}$ and $\mathbb{F}_{H_{0}^{\ast }}$ whereas the 
	latter the one induced on $\mathcal{O}\left( H_{0}\right) $).
	
	Hence the tangent space at the critical point  $wH_{0}\in \mathcal{O}%
	\left( H_{0}\right) $ is generated by $\widetilde{X}_{\alpha }\left(
	wH_{0}\right) -\mathrm{Ad}\left( m\right) \widetilde{X}_{-\alpha }\left(
	wH_{0}\right) $ and $\widetilde{iX}_{\alpha }\left( wH_{0}\right) +
	i\mathrm{Ad}\left( m\right) \widetilde{X}_{-\alpha }\left( wH_{0}\right) $. \end{proof}

The generators of the tangent space at  $\Gamma\left( m\circ
R_{w_{0}}\right) $ of the previous proposition can also be described
in the following simpler manner. Take $H_{1}\in \mathfrak{h}_{\mathbb{%
		R}}$ such that $m=e^{iH_{1}}$. Then, $\mathrm{Ad}\left( m\right)
X_{\alpha }=e^{i\alpha \left( H_{1}\right) }X_{\alpha }$. This way, the vector fields 
that provide the generators at $wH_{0}$ become

\begin{itemize}
	\item $\widetilde{X}_{\alpha }-\mathrm{Ad}\left( m\right)
		\widetilde{X}_{-\alpha }=\widetilde{X}_{\alpha }-e^{-i\alpha \left( H_{1}\right) }%
	\widetilde{X}_{-\alpha }$ with $\alpha \left( wH_{0}\right) <0$ and
	
	\item $\widetilde{iX}_{\alpha }+\mathrm{Ad}\left( m\right)
		\widetilde{iX}_{-\alpha }=\widetilde{iX}_{\alpha }+e^{-i\alpha \left( H_{1}\right) }%
	\widetilde{iX}_{-\alpha }$ with $\alpha \left( wH_{0}\right) <0$.
\end{itemize}

It is now possible to calculate $\mathrm{Hess}\left( f_{H}\right) $ at  
 $wH_{0}$ using  formula  (\ref{forhesswhzero}). The
elements of  $\mathfrak{g}$ which define the generating fields belong to  $%
\mathfrak{g}_{\alpha }\oplus \mathfrak{g}_{-\alpha }$. Hence the 
Hessian  vanishes at a pair of generators coming from distinct roots,
since, with respect to the  Cartan-Killing form, $\mathfrak{g}_{\pm
	\alpha }$ is orthogonal to  $\mathfrak{g}_{\pm \beta }$ if $\beta \neq \pm
\alpha $. For the fields provided by a root $\alpha $ with $\alpha
\left( wH_{0}\right) <0$, we obtain (in $wH_{0}$):

\begin{itemize}
	\item $\mathrm{Hess}\left( f_{H}\right) \left( \widetilde{X}_{\alpha
	}-e^{-i\alpha \left( H_{1}\right) }\widetilde{X}_{-\alpha },\widetilde{X}%
	_{\alpha }-e^{-i\alpha \left( H_{1}\right) }\widetilde{X}_{-\alpha }\right)
	=$
	$$-\langle \lbrack wH_{0},X_{\alpha }-e^{-i\alpha \left( H_{1}\right)
	}X_{-\alpha }],[H,X_{\alpha }-e^{-i\alpha \left( H_{1}\right) }X_{-\alpha
	}]\rangle \text{.}$$ The second term equals
	\begin{equation*}
	-\langle \alpha \left( wH_{0}\right) X_{\alpha }+e^{-i\alpha \left(
		H_{1}\right) }\alpha \left( wH_{0}\right) X_{-\alpha },\alpha \left(
	H\right) X_{\alpha }+e^{-i\alpha \left( H_{1}\right) }\alpha \left( H\right)
	X_{-\alpha }\rangle .
	\end{equation*}%
Now, $\langle X_{\alpha },X_{\alpha }\rangle =\langle X_{-\alpha
	},X_{-\alpha }\rangle =0$ and $\langle X_{\alpha },X_{-\alpha }\rangle =1$
	(Weyl basis) therefore, the Hessian becomes
	\begin{equation*}
	-2\alpha \left( wH_{0}\right) \alpha \left( H\right) e^{-i\alpha \left(
		H_{1}\right) }.
	\end{equation*}
	
	\item $\mathrm{Hess}\left( f_{H}\right) \left( \widetilde{iX}_{\alpha
	}+e^{-i\alpha \left( H_{1}\right) }\widetilde{iX}_{-\alpha },\widetilde{iX}%
	_{\alpha }+e^{-i\alpha \left( H_{1}\right) }\widetilde{iX}_{-\alpha }\right)
	=$
	$$-\langle \lbrack wH_{0},iX_{\alpha }+e^{-i\alpha \left( H_{1}\right)
	}iX_{-\alpha }],[H,iX_{\alpha }+e^{-i\alpha \left( H_{1}\right) }iX_{-\alpha
	}]\rangle .$$ That is,%
	\begin{equation*}
	\langle \alpha \left( wH_{0}\right) X_{\alpha }-e^{-i\alpha \left(
		H_{1}\right) }\alpha \left( wH_{0}\right) X_{-\alpha },\alpha \left(
	H\right) X_{\alpha }-e^{-i\alpha \left( H_{1}\right) }\alpha \left( H\right)
	X_{-\alpha }\rangle .
	\end{equation*}%
	Thus the  Hessian equals %
	\begin{equation*}
	-2\alpha \left( wH_{0}\right) \alpha \left( H\right) e^{-i\alpha \left(
		H_{1}\right) }.
	\end{equation*}
	
	\item $\mathrm{Hess}\left( f_{H}\right) \left( \widetilde{X}_{\alpha
	}-e^{-i\alpha \left( H_{1}\right) }\widetilde{X}_{-\alpha },\widetilde{iX}%
	_{\alpha }+e^{-i\alpha \left( H_{1}\right) }\widetilde{iX}_{-\alpha }\right)=$
	$$-\langle \lbrack wH_{0},X_{\alpha }-e^{-i\alpha \left( H_{1}\right)
	}X_{-\alpha }],[H,iX_{\alpha }+e^{-i\alpha \left( H_{1}\right) }iX_{-\alpha
	}]\rangle .$$ That is, 
	\begin{equation*}
	-i\langle \alpha \left( wH_{0}\right) X_{\alpha }+e^{-i\alpha \left(
		H_{1}\right) }\alpha \left( wH_{0}\right) X_{-\alpha },\alpha \left(
	H\right) X_{\alpha }-e^{-i\alpha \left( H_{1}\right) }\alpha \left( H\right)
	X_{-\alpha }\rangle =0.
	\end{equation*}
\end{itemize}

Summing up,

\begin{proposition}
	\label{prophess} The Hessian of $f_{H}$ restricted to the tangent space 
	$$%
	T_{wH_{0}}\left( \Gamma\left( e^{iH_{1}}\circ R_{w_{0}}\right)
	\right) $$ is diagonalizable in the basis 
	\begin{equation*}
	\{\left( \widetilde{X}_{\alpha }-e^{-i\alpha \left( H_{1}\right) }\widetilde{%
		X}_{-\alpha }\right) \left( wH_{0}\right) ,\left( \widetilde{iX}_{\alpha
	}+e^{-i\alpha \left( H_{1}\right) }\widetilde{iX}_{-\alpha }\right) \left(
	wH_{0}\right) :\alpha \left( wH_{0}\right) <0\}.
	\end{equation*}%
	The diagonal elements are given by 
	\begin{equation*}
	-2\alpha \left( wH_{0}\right) \alpha \left( H\right) e^{-i\alpha \left(
		H_{1}\right) }.
	\end{equation*}
\end{proposition}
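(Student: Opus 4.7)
The plan is to apply formula (\ref{forhesswhzero}) for $\mathrm{Hess}(f_H)$ at the critical point $wH_0$ to each pair of basis vectors of $T_{wH_0}\Gamma(e^{iH_1}\circ R_{w_0})$ produced by the previous proposition, and to show that the only nonzero entries sit on the diagonal with value $-2\alpha(wH_0)\alpha(H)e^{-i\alpha(H_1)}$.

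First, I would reduce the problem to one-root computations. The generators of the tangent space come in pairs indexed by roots $\alpha$ with $\alpha(wH_0)<0$, and each pair lies inside the single root-space sum $\mathfrak{g}_\alpha\oplus \mathfrak{g}_{-\alpha}$. Since $\mathrm{ad}(wH_0)$ and $\mathrm{ad}(H)$ preserve each root space (both $wH_0,H\in\mathfrak{h}$), and since $\mathfrak{g}_{\pm\alpha}$ is Cartan--Killing orthogonal to $\mathfrak{g}_{\pm\beta}$ whenever $\beta\neq\pm\alpha$, the right-hand side of (\ref{forhesswhzero}) vanishes on any pair of generators coming from distinct roots. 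Hence the Hessian matrix is block-diagonal with one $2\times 2$ block per root $\alpha$ with $\alpha(wH_0)<0$.

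Next I would evaluate each block. For a fixed $\alpha$ with $\alpha(wH_0)<0$, write $u_\alpha=\widetilde X_\alpha-e^{-i\alpha(H_1)}\widetilde X_{-\alpha}$ and $v_\alpha=\widetilde{iX}_\alpha+e^{-i\alpha(H_1)}\widetilde{iX}_{-\alpha}$. Substituting into (\ref{forhesswhzero}) reduces the computation to Killing-form pairings on $\mathfrak{g}_\alpha\oplus\mathfrak{g}_{-\alpha}$, where the brackets $[wH_0,\cdot]$ and $[H,\cdot]$ act on $X_{\pm\alpha}$ by the scalars $\pm\alpha(wH_0)$ and $\pm\alpha(H)$. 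Using the Weyl basis normalizations $\langle X_\alpha,X_\alpha\rangle=\langle X_{-\alpha},X_{-\alpha}\rangle=0$ and $\langle X_\alpha,X_{-\alpha}\rangle=1$, the two diagonal entries $\mathrm{Hess}(f_H)(u_\alpha,u_\alpha)$ and $\mathrm{Hess}(f_H)(v_\alpha,v_\alpha)$ each collapse to $-2\alpha(wH_0)\alpha(H)e^{-i\alpha(H_1)}$, while the cross term $\mathrm{Hess}(f_H)(u_\alpha,v_\alpha)$ carries an overall $i$ multiplying a pairing whose two nonzero $X_\alpha X_{-\alpha}$ and $X_{-\alpha}X_\alpha$ contributions have opposite signs and cancel.

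Combining the blocks yields diagonalizability in the claimed basis with the claimed eigenvalues. The only mildly delicate bookkeeping is tracking the phase $e^{-i\alpha(H_1)}$ and the sign difference between the $+$ and $-$ combinations in $u_\alpha$ versus $v_\alpha$, which is precisely what forces the cross term to vanish rather than double; no genuine obstacle is anticipated beyond this careful sign accounting.
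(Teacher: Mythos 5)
Your proposal is correct and follows essentially the same route as the paper: reduce to formula (\ref{forhesswhzero}), use that each generator pair lies in $\mathfrak{g}_{\alpha}\oplus\mathfrak{g}_{-\alpha}$ together with Cartan--Killing orthogonality of distinct root-space pairs to get block-diagonality, then compute the three pairings per root with the Weyl basis normalizations $\langle X_{\alpha},X_{\alpha}\rangle=0$, $\langle X_{\alpha},X_{-\alpha}\rangle=1$ to get the stated diagonal entries and vanishing cross term. The only cosmetic difference is that you spell out explicitly that $\mathrm{ad}(wH_{0})$ and $\mathrm{ad}(H)$ preserve root spaces, a point the paper leaves implicit.
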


For example, the orbit of the compact group (the zero section in the identification with the 
cotangent bundle) is $\Gamma\left(
R_{w_{0}}\right) $. If $w=1$ and $H_{1}=0$, then  $-2\alpha \left(
wH_{0}\right) \alpha \left( H\right) e^{-i\alpha \left( H_{1}\right)
}=-2\alpha \left( H_{0}\right) \alpha \left( H\right) $ which is $<0$ since $%
\alpha \left( H_{0}\right) <0$ implies that $\alpha <0$ and consequently $\alpha
\left( H\right) <0$. That is, the Hessian is negative definite, which was to be 
expected  given that the critical point $H_{0}$ is a maximum of  $%
\text{Re}f_{H}$ on the zero section.


 We now consider graphs in the compactification
 $\mathbb{F}_{H_{\protect\mu }}\times \mathbb{F}_{H_{\protect\mu }}^{\ast }$
The isomorphism between the open orbit in $\mathbb{F}_{H_{\mu }}\times 
\mathbb{F}_{H_{\mu }^{\ast }}$ (diagonal action) and the orbit $G\cdot
\left( v_{0}\otimes \varepsilon _{0}\right) $ of $v_{0}\otimes \varepsilon
_{0}\in V\otimes V^{\ast }$ (representation of $G$) leads to a convenient
description of the intersection of graphs of anti-holomorphic functions $%
\mathbb{F}_{H_{\mu }}\rightarrow \mathbb{F}_{H_{\mu }^{\ast }}$ with the
open orbit.

We return to the anti-holomorphic functions $m\circ
R_{w_{0}}\colon {\mathbb F}_{H_{\mu }}\rightarrow \mathbb{F}_{H_{\mu }^{\ast }}$
with $m\in T$, the maximal torus. The submanifold determined by $\mathrm{%
	graph}\left( R_{w_{0}}\right) $ in $R_{w_{0}}$ on $\mathbb{F}_{H_{\mu
}}\times \mathbb{F}_{H_{\mu }^{\ast }}$ is the orbit of the compact group $K$
through $\left( v_{0},\varepsilon _{0}\right) $. This orbit stays inside $%
G\cdot \left( v_{0},\varepsilon _{0}\right) $ and is identified with the $K$%
-orbit of $v_{0}\otimes \varepsilon _{0}$ in $V\otimes V^{\ast }$ (by
equivariance). The isomorphism with the adjoint orbit $\mathrm{Ad}\left(
G\right) H_{\mu }$ associates this $K$-orbit inside $V\otimes V^{\ast }$
with the intersection \ $i\mathfrak{u}\cap $\textrm{Ad}$\left( G\right)
H_{\mu }$ (the Hermitian matrices in the case of $\mathfrak{sl}\left( n+1,%
\mathbb{C}\right) $ or else the zero section of $T^{\ast }\mathbb{F}_{H_{\mu
}}$). This set is formed by the elements $v\otimes \varepsilon \in G\cdot
\left( v_{0}\otimes \varepsilon _{0}\right) $ such that $\ker \varepsilon
=v^{\bot }$ (with respect to the $K$-invariant Hermitian form $\left( \cdot
,\cdot \right) ^{\mu }$ ), since $u\in K$ is an isometry of $\left( \cdot
,\cdot \right) ^{\mu }$ and $\ker \varepsilon _{0}=v_{0}^{\bot }$. The
converse is true as well: if $v\otimes \varepsilon \in G\cdot \left(
v_{0}\otimes \varepsilon _{0}\right) $ and $\ker \varepsilon =v^{\bot }$
then $v\otimes \varepsilon \in \Gamma\left( R_{w_{0}}\right) $. In
fact, if $\ker \varepsilon =v^{\bot }$ and $X\in \mathfrak{u}$ then $\rho
_{\mu }\left( X\right) $ is anti-Hermitian, thus $\left( \rho _{\mu }\left(
X\right) v,v\right) ^{\mu }$ is purely imaginary and since $\ker \varepsilon
=v^{\bot }$, then $\varepsilon \left( \rho _{\mu }\left( X\right) v\right) $
is purely imaginary as well. Therefore, $\langle M\left( v\otimes
\varepsilon \right) ,X\rangle =\varepsilon \left( \rho _{\mu }\left(
X\right) v\right) $ is imaginary for arbitrary $X\in \mathfrak{u}$, which
implies that $M\left( v\otimes \varepsilon \right) \in i\mathfrak{u}$.

Summing up, we obtain the following description of $\Gamma\left(
R_{w_{0}}\right) $ regarded as a subset of $G\cdot \left( v_{0}\otimes
\varepsilon _{0}\right) $. Consider $\Phi ^{-1}\left( \Gamma\left(
R_{w_{0}}\right) \right) \subset G\cdot \left( v_{0}\otimes \varepsilon
_{0}\right) $, which, abusing notation, we also denote by $\Gamma%
\left( R_{w_{0}}\right) $. We have:

\begin{proposition}
	$\Gamma\left( R_{w_{0}}\right) =\{v\otimes \varepsilon \in G\cdot
	\left( v_{0}\otimes \varepsilon _{0}\right) :\ker \varepsilon =v^{\bot }\}.$ 
\end{proposition}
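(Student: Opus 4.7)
The plan is to prove the two containments separately, essentially by packaging the material already developed in the paragraphs just above the statement. For $\subseteq$, I would start from the identification of $\Gamma(R_{w_0})$ with the $K$-orbit of $(v_0,\varepsilon_0)$ in $\mathbb{F}_{H_\mu}\times\mathbb{F}_{H_\mu^*}$, equivariantly transported to the $K$-orbit of $v_0\otimes\varepsilon_0$ inside $V\otimes V^*$. Since every $u\in K$ is an isometry for the Hermitian form $(\cdot,\cdot)^\mu$ and the base pair satisfies $\ker\varepsilon_0 = v_0^\perp$ by the choice of highest weight vector and dual functional, the orthogonality relation is propagated along the orbit: $\ker(u\cdot\varepsilon_0)=(u\cdot v_0)^\perp$ for every $u\in K$.

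For the reverse inclusion, I would take $v\otimes\varepsilon\in G\cdot(v_0\otimes\varepsilon_0)$ with $\ker\varepsilon = v^\perp$ and show that its image $M(v\otimes\varepsilon)$ in the adjoint orbit lies in $i\mathfrak{u}$. Once this is established, the intersection $i\mathfrak{u}\cap\mathcal{O}(H_\mu)$ is precisely the zero section of $T^*\mathbb{F}_{H_\mu}$, i.e.\ the $K$-orbit through $H_\mu$, which is $\Gamma(R_{w_0})$ under the identification; so the argument would close. Using the formula $\langle M(v\otimes\varepsilon),X\rangle=\varepsilon(\rho_\mu(X)v)$ for $X\in\mathfrak{g}$, I would test with an arbitrary $X\in\mathfrak{u}$: then $\rho_\mu(X)$ is anti-Hermitian for $(\cdot,\cdot)^\mu$, so $(\rho_\mu(X)v,v)^\mu\in i\mathbb{R}$. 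The hypothesis $\ker\varepsilon=v^\perp$ makes $\varepsilon$ proportional to $(\cdot,v)^\mu$, and combining these yields $\varepsilon(\rho_\mu(X)v)\in i\mathbb{R}$ for every $X\in\mathfrak{u}$, which is exactly the condition $M(v\otimes\varepsilon)\in i\mathfrak{u}$.

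The main subtlety, I expect, is justifying cleanly that $\ker\varepsilon=v^\perp$ forces $\varepsilon$ to be a \emph{real} multiple of $(\cdot,v)^\mu$ inside the orbit, rather than a general complex one; a priori the kernel condition determines $\varepsilon$ only up to a complex scalar. The resolution is to use the rescaling freedom $v\otimes\varepsilon=(\lambda v)\otimes(\lambda^{-1}\varepsilon)$ built into the tensor, together with the normalization inherited from the base point $v_0\otimes\varepsilon_0=v_0\otimes(\cdot,v_0)^\mu$ in the $G$-orbit, to absorb the phase and pin the constant down to a positive real number. After that, the anti-Hermiticity computation above goes through unchanged.
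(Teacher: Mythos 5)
Your proof takes essentially the same route as the paper's: identify $\Gamma\left(R_{w_{0}}\right)$ with the $K$-orbit of $v_{0}\otimes\varepsilon_{0}$ and use $K$-isometry of $(\cdot,\cdot)^{\mu}$ for one containment, then for the other show $M(v\otimes\varepsilon)\in i\mathfrak{u}$ via anti-Hermiticity of $\rho_{\mu}(X)$, $X\in\mathfrak{u}$. The scaling subtlety you flag — that $\ker\varepsilon=v^{\bot}$ gives $\varepsilon=c(\cdot,v)^{\mu}$ only up to a complex scalar, so one still needs $c\in\mathbb{R}$ to conclude $\varepsilon\left(\rho_{\mu}(X)v\right)$ is purely imaginary — is real and is silently skipped in the paper, and your resolution (fixing the normalization of the base point and using the $G$-invariance of the evaluation $\varepsilon(v)=\varepsilon_{0}(v_{0})>0$ to pin $c$ to a positive real) is the correct way to close it.
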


Consider now the graph of $m\circ R_{w_{0}}\colon \mathbb{F}_{H_{\mu }}\rightarrow 
\mathbb{F}_{H_{\mu }^{\ast }}$ with $m\in T$. In general $\Gamma%
\left( m\circ R_{w_{0}}\right) \subset \mathbb{F}_{H_{\mu }}\times \mathbb{F}%
_{H_{\mu }^{\ast }}$ is not contained in the open orbit and, consequently,
intercepts this orbit in a noncompact subset. In either case, take the subgroup 
\begin{equation*}
U^{m}=\{\left( u,mum^{-1}\right) \in U\times U:u\in U\}.
\end{equation*}%
The graph $\Gamma\left( m\circ R_{w_{0}}\right) $ is the orbit of $K^{m} 
$ through $\left( v_{0},\varepsilon _{0}\right) $. This happens because, if $%
x=u\cdot v_{0}\in \mathbb{F}_{H_{\mu }}$ then $R_{w_{0}}\left( x\right)
=u\cdot \varepsilon _{0}$ therefore 
\begin{equation*}
\left( x,m\circ R_{w_{0}}\left( x\right) \right) =\left( x,m\cdot
u\varepsilon _{0}\right) .
\end{equation*}%
This means that $\Gamma\left( m\circ R_{w_{0}}\right) $ is formed by
elements of the form $\left( x,my\right) $ with $\left( x,y\right) \in 
\Gamma\left( R_{w_{0}}\right) $, that is, 
\begin{equation*}
\Gamma\left( m\circ R_{w_{0}}\right) =m_{2}\left( \Gamma%
\left( R_{w_{0}}\right) \right)
\end{equation*}%
where $m_{2}\left( x,y\right) =\left( y,mx\right) $. Passing to the
realization inside $V\otimes V^{\ast }$ we obtain a geometric realization of 
$\Phi ^{-1}\left( \Gamma\left( m\circ R_{w_{0}}\right) \right) $,
also denoted by $\Gamma\left( m\circ R_{w_{0}}\right) $:

\begin{proposition}
	$\Gamma\left( m\circ R_{w_{0}}\right) =\{v\otimes \rho _{\mu }^{\ast
	}\left( m\right) \varepsilon \in G\cdot \left( v_{0}\otimes \varepsilon
	_{0}\right) :\ker \varepsilon =v^{\bot }\}.$
\end{proposition}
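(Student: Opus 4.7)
The plan is to deduce this proposition from the preceding one, which characterizes $\Gamma(R_{w_{0}})$ inside $G\cdot(v_{0}\otimes\varepsilon_{0})$ as the locus $\{v\otimes\varepsilon : \ker\varepsilon = v^{\bot}\}$, together with the set-theoretic identity $\Gamma(m\circ R_{w_{0}}) = \{(x,my) : (x,y)\in\Gamma(R_{w_{0}})\}$ derived immediately above. The entire argument is a bookkeeping exercise once we have identified how the transformation $y\mapsto my$ on $\mathbb{F}_{H_{\mu}^{\ast}}$ translates to $V\otimes V^{\ast}$.

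First I would record the equivariance of the map $\Phi$: the identification of the open orbit in $\mathbb{F}_{H_{\mu}}\times\mathbb{F}_{H_{\mu}^{\ast}}$ with $G\cdot(v_{0}\otimes\varepsilon_{0})$ intertwines the componentwise $G\times G$ action with $\rho_{\mu}\otimes\rho_{\mu}^{\ast}$, where $\rho_{\mu}^{\ast}$ denotes the contragredient representation. Consequently, if $(x,y)$ lies in the open orbit and corresponds to $v\otimes\varepsilon$, then $(x,my)$ still lies in the open orbit and corresponds to $v\otimes\rho_{\mu}^{\ast}(m)\varepsilon$.

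Next I would combine this observation with the previous proposition. The points of $\Gamma(m\circ R_{w_{0}})$ in the open orbit are exactly the pairs $(x,my)$ with $(x,y)\in\Gamma(R_{w_{0}})$, and each such $(x,y)$ corresponds to some $v\otimes\varepsilon$ satisfying $\ker\varepsilon = v^{\bot}$. Applying the transformation $1\otimes\rho_{\mu}^{\ast}(m)$ in the second factor then gives the tensor $v\otimes\rho_{\mu}^{\ast}(m)\varepsilon$, producing exactly the set on the right-hand side.

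The only minor point to verify is that this transformation really does send $G\cdot(v_{0}\otimes\varepsilon_{0})$ into itself, so that the set on the right-hand side is well posed. This follows because $m\in T$ fixes the weight line spanned by $\varepsilon_{0}$, so $\rho_{\mu}^{\ast}(m)\varepsilon_{0}\in\mathbb{C}^{\times}\cdot\varepsilon_{0}$ and the $G$-orbit is preserved. I do not anticipate any real obstacle: once the contragredient identification is in place, the proposition follows by direct substitution.
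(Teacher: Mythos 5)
Your core argument is correct and matches the paper's: translate the identity $\Gamma(m\circ R_{w_0}) = m_2(\Gamma(R_{w_0}))$ (where $m_2$ applies $m$ in the second factor) into the $V\otimes V^*$ picture via equivariance of $\Phi$ with respect to $\rho_\mu\otimes\rho_\mu^*$, then substitute the characterization of $\Gamma(R_{w_0})$ from the preceding proposition. That is exactly the bookkeeping the paper carries out in the paragraph before the statement.

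There is, however, a genuine error in your final verification, even though it is harmless because the step is unnecessary. You claim that $1\otimes\rho_\mu^*(m)$ sends $G\cdot(v_0\otimes\varepsilon_0)$ into itself, arguing from the fact that $\rho_\mu^*(m)\varepsilon_0\in\mathbb{C}^\times\cdot\varepsilon_0$. That shows the base point is fixed projectively, but it does \emph{not} imply orbit preservation: $1\otimes\rho_\mu^*(m)$ does not commute with (nor normalize) the diagonal action $\rho_\mu\otimes\rho_\mu^*$ unless $m$ is central, so one cannot move the transformation past a general $g\in G$. In fact the paper explicitly remarks, just before the proposition, that $\Gamma(m\circ R_{w_0})$ is in general \emph{not} contained in the open orbit, whereas $\Gamma(R_{w_0})$ is; since $\Gamma(m\circ R_{w_0})$ is obtained from $\Gamma(R_{w_0})$ by applying exactly this transformation, that transformation must carry some points of the open orbit outside of it. No harm is done because the right-hand side is well posed regardless: the condition ``$\in G\cdot(v_0\otimes\varepsilon_0)$'' is part of the defining set, so the proposition is describing precisely the intersection of the graph with the open orbit, and no invariance of the orbit is needed. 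You should simply delete that last verification rather than repair it.
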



Now we  have the setup to prove that $f_{H}$ is real on $\Gamma%
\left( m\circ R_{w_{0}}\right) $.
This is essential to obtain real  Lagrangian thimbles. With the realization of 
$G/Z_{\mu }$ as an orbit in $V\otimes V^{\ast }$ the proof that
$f_{H}$ is real greatly simplifies. Actually, this is not only true for elements
$m\in T$, but for more general linear transformations of 
$V$ (or more precisely, of $V^{\ast }$). 

Before stating the result, observe that the function $f_{H}$
is a priori defined on the orbit $G\cdot \left( v_{0}\otimes
\varepsilon _{0}\right) $ and is given by $f_{H}\left( v\otimes
\varepsilon \right) =\varepsilon \left( \rho _{\mu }\left( H\right) v\right) 
$. From this expression we see that  $f_{H}$ extends to a linear functional of
$V\otimes V^{\ast }$, that is, it is defined  on points 
outside  the orbit $G\cdot \left( v_{0}\otimes \varepsilon
_{0}\right) $ as well. 

\begin{proposition}
	Let  $D:V\rightarrow V$ be a linear transformation that is diagonalizable  
	on a basis adapted to the root subspaces and consider the set
	\begin{equation*}
	D_{2}\left( \Gamma\left( R_{w_{0}}\right) \right) =\{v\otimes D^{\ast
	}\varepsilon \in V\otimes V^{\ast }:\ker \varepsilon =v^{\bot }\}
	\end{equation*}%
	where $D^{\ast }\varepsilon =\varepsilon \circ D$.  Suppose that $D$ has 
	real eigenvalues. Then, $f_{H}$ assumes real values on  $D_{2}\left( 
	\Gamma\left( R_{w_{0}}\right) \right) $. 
\end{proposition}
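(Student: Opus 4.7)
The plan is to reduce the assertion to a self-adjointness statement for a single operator on $V$ with respect to the $K$-invariant Hermitian form $(\cdot,\cdot)^{\mu}$, and then invoke the elementary fact that $(Tv,v)^{\mu}\in\mathbb R$ whenever $T$ is self-adjoint.

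First I would exploit the previous proposition, which identifies $\Gamma(R_{w_{0}})$ with the $K$-orbit of $(v_{0},\varepsilon _{0})$. For $(v,\varepsilon)\in\Gamma(R_{w_{0}})$, writing $v=kv_{0}$ and $\varepsilon=k\varepsilon_{0}$ with $k\in K$, the $K$-invariance of $(\cdot,\cdot)^{\mu}$ together with the normalization $\varepsilon_{0}=(\cdot,v_{0})^{\mu}$ (the identification of the zero section with $i\mathfrak u\cap \mathcal O(H_{\mu})$) yields $\varepsilon=(\cdot,v)^{\mu}$, so that $D^{\ast}\varepsilon=(D\,\cdot\,,v)^{\mu}$. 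Using the extension of $f_{H}$ to $V\otimes V^{\ast}$ given by $f_{H}(v\otimes \varepsilon)=\varepsilon(\rho_{\mu}(H)v)$, I obtain
\begin{equation*}
f_{H}\bigl(v\otimes D^{\ast}\varepsilon\bigr)=(D^{\ast}\varepsilon)\bigl(\rho_{\mu}(H)v\bigr)=\bigl(D\,\rho_{\mu}(H)\,v,\,v\bigr)^{\mu}.
\end{equation*}

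The statement then reduces to showing that $T\ce D\,\rho_{\mu}(H)$ is self-adjoint for $(\cdot,\cdot)^{\mu}$. I would verify this in three steps. First, $\rho_{\mu}(H)$ is self-adjoint: since $iH\in\mathfrak u$, the operator $\rho_{\mu}(iH)=i\rho_{\mu}(H)$ is anti-Hermitian, whence $\rho_{\mu}(H)$ is Hermitian. Second, the weight spaces $V_{\lambda}$ are mutually $(\cdot,\cdot)^{\mu}$-orthogonal; this follows at once from the anti-Hermiticity of $\rho_{\mu}(iH')$ for $H'\in \mathfrak h_{\mathbb R}$ applied to weight vectors, and the hypothesis that $D$ is diagonalizable on a weight-adapted basis with real eigenvalues lets me refine this into an orthonormal eigenbasis of $D$, making $D$ self-adjoint. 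Third, $D$ commutes with $\rho_{\mu}(H)$, because the latter acts as the real scalar $\lambda(H)$ on each $V_{\lambda}$ while $D$ preserves each $V_{\lambda}$. A product of commuting self-adjoint operators is self-adjoint, so $T$ is self-adjoint and $(Tv,v)^{\mu}\in\mathbb R$.

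The main obstacle is verifying self-adjointness of $D$: it is where the hypotheses on $D$ enter essentially, and it depends on the fact that weight-adapted bases can be simultaneously made orthonormal for $(\cdot,\cdot)^{\mu}$. Once this is in place, the other two ingredients (self-adjointness of $\rho_{\mu}(H)$ and the commutativity of $D$ with $\rho_{\mu}(H)$) are formal, and the reality of $f_{H}$ on $D_{2}(\Gamma(R_{w_{0}}))$ follows immediately.
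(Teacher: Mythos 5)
Your argument is correct and takes essentially the same approach as the paper: both reduce to the reality of $\varepsilon\bigl(D\rho_{\mu}(H)v\bigr)$ for $\ker\varepsilon = v^{\bot}$, and both hinge on $D\rho_{\mu}(H)$ being diagonal with real entries in an orthonormal weight-adapted basis. Where the paper writes this as $\mathrm{tr}\bigl((v\otimes\varepsilon)D\rho_{\mu}(H)\bigr)$ with $v\otimes\varepsilon$ Hermitian, you write it as the quadratic form $(D\rho_{\mu}(H)v,v)^{\mu}$ of a self-adjoint operator --- the same computation in lighter notation; and the orthonormality subtlety you rightly flag is equally implicit in the paper's ``If this basis is orthonormal'' and holds automatically in the intended applications where $D\in T$ acts by a scalar on each weight space.
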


\begin{proof}
	If $v\otimes D\varepsilon \in D_{2}\left( \Gamma\left(
	R_{w_{0}}\right) \right) $ then
	\begin{equation*}
	f_{H}\left( v\otimes D^{\ast }\varepsilon \right) =\varepsilon \left( D\rho
	_{\mu }\left( H\right) v\right) =\mathrm{tr}\left( \left( v\otimes
	\varepsilon \right) D\rho _{\mu }\left( H\right) \right) .
	\end{equation*}%
	On a basis adapted to the root subspaces, $D\rho _{\mu }\left(
	H\right) $ is diagonal with real eigenvalues. If this basis is 
	orthonormal, then $v\otimes \varepsilon $ has a Hermitian matrix, and
	therefore real  diagonal entries. Hence, the last term of the equality above 
	is real, and  $f_{H}\left( v\otimes D^{\ast }\varepsilon \right) $ is real as well. 
\end{proof}

\begin{corollary}
	If $m\in T$ satisfies $m^{2}=1$ then  $f_{H}$ is real on $\Gamma%
	\left( m\circ R_{w_{0}}\right) $.
\end{corollary}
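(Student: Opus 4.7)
The plan is to deduce this directly from the preceding proposition by choosing $D$ to be the representation $\rho_{\mu}(m)$ acting on $V$, and checking that the two hypotheses on $D$ (diagonalizability on a root-adapted basis, and reality of eigenvalues) are both consequences of $m\in T$ together with $m^2=1$.

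First I would identify the geometric object $\Gamma(m\circ R_{w_{0}})$ with $D_{2}(\Gamma(R_{w_{0}}))$ for $D=\rho_{\mu}(m)$. This is already built into the earlier description
\[
\Gamma(m\circ R_{w_{0}})=\{v\otimes \rho_{\mu}^{\ast}(m)\varepsilon \in G\cdot(v_{0}\otimes \varepsilon_{0}):\ker \varepsilon =v^{\bot}\},
\]
so that the identification $D^{\ast}\varepsilon =\varepsilon \circ \rho_{\mu}(m)$ matches exactly the twist by $m$ on the second factor. Once this identification is made, the corollary is exactly the conclusion of the preceding proposition applied to $D=\rho_{\mu}(m)$, provided $D$ meets its hypotheses.

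For the first hypothesis, since $m\in T=\exp(i\mathfrak{h}_{\mathbb{R}})$, write $m=e^{iH_{1}}$ with $H_{1}\in \mathfrak{h}_{\mathbb{R}}$. Then $\rho_{\mu}(m)$ is diagonal on any weight basis of $V$, and weight bases are adapted to the root space decomposition of $\mathfrak{g}$; its eigenvalues are $e^{i\lambda(H_{1})}$ as $\lambda$ runs over the weights of $V$. For the second hypothesis, the condition $m^{2}=1$ gives $\rho_{\mu}(m)^{2}=\mathrm{id}$, so every eigenvalue $e^{i\lambda(H_{1})}$ satisfies $e^{2i\lambda(H_{1})}=1$, hence equals $\pm 1$. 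Thus all eigenvalues of $D=\rho_{\mu}(m)$ are real.

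With both hypotheses verified, the preceding proposition applies and yields that $f_{H}$ takes real values on $D_{2}(\Gamma(R_{w_{0}}))=\Gamma(m\circ R_{w_{0}})$, which is the corollary. I do not anticipate any real obstacle here; the only subtle point is making sure that the involutivity of $m$ in the group $T$ really implies involutivity of the operator $\rho_{\mu}(m)$ on $V$, which is immediate because $\rho_{\mu}$ is a group homomorphism. Everything else is bookkeeping between the flag picture and the $V\otimes V^{\ast}$ picture already set up in the preceding paragraphs.
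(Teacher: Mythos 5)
Your argument is correct and follows the same route the paper takes: verify that $D=\rho_{\mu}(m)$ satisfies the hypotheses of the preceding proposition (diagonal on a root-adapted weight basis because $m\in T$, with eigenvalues $\pm 1$ because $m^{2}=1$) and then apply that proposition. You simply spell out the bookkeeping between $\Gamma(m\circ R_{w_{0}})$ and $D_{2}(\Gamma(R_{w_{0}}))$ in more detail than the paper's terse remark.
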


\begin{proof}
	In fact, if $m^{2}=1$ then the eigenvalues of $m$ are $\pm 1$ and
	since $m\in T$, $\rho _{\mu }\left( m\right) =\pm \mathrm{id}$  
	on the root spaces.   
\end{proof}

Further properties of Lagrangian submanifolds  inside products of flags and their intersection numbers
are described in \cite{GSMV}. 

\section{Minimal semisimple orbits}\label{MSO}

We now focus our attention on the case of minimal semisimple orbits, by
considering  the orbits of  $\mathfrak{sl}( n+1,\mathbb{C}) $ of smallest dimension. The
corresponding flag manifolds are 
  $\mathbb{F}_{H_{0}}=\mathbb{P}^{n}$ and $%
\mathbb{F}_{H_{0}^{\ast }}=\mathrm{Gr}_{n}( n+1)=  \mathbb{P}^{n^*}  $ 
for
$
H_{0}={\tiny \left( 
\begin{array}{cc}
n & 0 \\ 
0 & -1_{ n \times  n }%
\end{array}%
\right)} $
$ H_{0}^{\ast }={\tiny \left( 
\begin{array}{cc}
1_{n \times n } & 0 \\ 
0 & -n%
\end{array}%
\right) }.
$
These are dual to each other, so it suffices to consider the case of $\mathbb{P}^{n}$.%

For  minimal flags 
it is possible 
to describe  real  Lagrangian thimbles of  $f_{H}$ for all 
singularities via the graphs  $\mathrm{graph}( m\circ
R_{w_{0}}) $ with $m\in T$. This happens because for each of these 
singularities there are elements  $m\in T$ such that  $\mathrm{Hess}\left( f_{H}\right) $
restricted to  $\Gamma( m\circ R_{w_0}) $ is either
positive definite or negative definite. Together with the fact (proved
below) that the imaginary part of  $f_{H}$ is constant over the corresponding graphs,
we obtain the stable and unstable manifolds of 
$\grad \left( \text{Re}f_{H}\right) $ and consequently also real
Lagrangian thimbles.

Our general construction specializes to this situation as follows:

\begin{itemize}
 \item  The diagonal action of  $\mathrm{Sl}( n+1,\mathbb{C}) $
	on  $\mathbb{F}_{H_{0}}\times \mathbb{F}_{H_{0}^{\ast }}=
	\mathbb{P}^{n}\times  \mathbb{P}^{n^*}$ has 2 orbits. An open dense one
	formed by pairs of transversal vectors $\left( V,W\right)
	\in \mathbb{P}^{n}\times  \mathbb{P}^{n^*} $ with $V\cap
	W=\{0\}$; and another orbit formed by  vectors $\left( V,W\right) \in 
	\mathbb{P}^{n}\times \mathbb{P}^{n^*}  $ with $V\subset W$.
		
	\item The diffeomorphism between the  open orbit and the adjoint orbit  
	$\mathcal O(H_{0}) $ associates to a pair  
	$\left( V,W\right) \in \mathbb{P}^{n}\times  \mathbb{P}^{n^*}  $ with  $V\cap W=\{0\}$ the
	linear transformation $T\colon \mathbb{C}^{n+1}\rightarrow \mathbb{C}^{n+1}$ with $Tv=nv$ if $v\in V$
	and $Tv=-v$ if $v\in W$.
	
	\item The map $R_{w_{0}}\colon \mathbb{P}^{n}\rightarrow   \mathbb{P}^{n^*} $
	 associates to a subspace of dimension $1$ of  $%
	\mathbb{C}^{n+1}$ its orthogonal complement with respect to the canonical Hermitian form
	of $\mathbb{C}^{n+1}$.
	
	\item $\mathcal{W}$ is the group of permutation of $n+1$ elements.
	
	\item The set of critical points of the potential in  $\mathbb{P}^{n}$ (orbit of the 
	Weyl group at the origin) has $n+1$ elements which are the subspaces  $%
	[e_{j}]$, \ $j=1,\ldots ,n+1$, generated by the vectors  of the canonical basis of $%
	\mathbb{C}^{n+1}$. The origin is given by  $[e_{1}]$, so that under the identifications
	this origin gets identified to  $H_{0}$; whereas  $[e_{j}]$ gets
	identified to $wH_{0}$ for any permutation  $w$ such that $w\left(
	1\right) =j$.
	
	\item The roots are $\alpha _{ij}$ with $i\neq j$, with corresponding eigenspaces
	 generated by the elementary matrices $X_{\alpha
		_{ij}}=X_{ij}$ (with $1$ in the position $ij$ and $0$ elsewhere).
	
	\item The roots $\alpha $ with $\alpha \left( H_{0}\right) <0$ are 
	given by $\alpha _{j1}$ with $2\leq j\leq n+1$ and consequently the 
	tangent space at the origin is identified with the space of column matrices 
	${\tiny
	\left( 
	\begin{array}{cc}
	0 & 0 \\ 
	\ast & 0_{n \times n }%
	\end{array}%
	\right) }$.

	\item The tangent spaces at the other  critical points are obtained via
	permutation: let $w$ be the permutation such that $w\left( 1\right) =j 
	$, then $\alpha( wH_{0}) <0$ if and only if $\alpha
	=\alpha _{ij}$ with $i\neq j$. Thus, the tangent space at  $[e_{j}]\approx
	wH_{0}$ is formed by matrices whose nonzero entries belong to  the   $j$-th column and
	that have a zero on entry $jj$. 
\end{itemize}

Assume now, once and for all that $n$ is even, hence we are working in $\mathfrak{sl}(n+1)$ with
matrices that have an odd number of diagonal entries. 

\begin{definition}
	\label{defememaismenos} Given a singularity $[e_{j}]$ define $m_{j}^{\pm
	}\in T=\exp \left( i\mathfrak{h}_{\mathbb{R}}\right) $ as follows:
	$$m_j^\pm= \mp (-1)^{j} \mathrm{diag}(1,\dots, 1, \pm 1_j, -1, \dots, -1)$$ 
	(the subindex indicates position $j$).

	%
	%
	%
	%
\end{definition}

The fact that the number of diagonal entries is odd, guaranties that in all cases $\det
m_{j}^{\pm }=1$ and, therefore, $m_{j}^{\pm }$ indeed belongs to  $T$. (Although
in the first and last cases this is also true for even $n$.)

\begin{proposition}
	Consider the singularity $\left[ e_{j}\right] \approx wH_{0}$. The restriction of 
	$\mathrm{Hess}( f_{H}) $ to the  tangent space $%
	T_{\left[ e_{j}\right] } \Gamma( m\circ R_{w_{0}}) $
	 is positive definite if  $m=m_{j}^{+}$ and negative definite if $%
	m=m_{j}^{-}$.
\end{proposition}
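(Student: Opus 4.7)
The plan is to apply proposition \ref{prophess} directly and then chase signs: the whole argument reduces to observing that at $m=m_j^\pm$ the eigenvalues computed there become real and uniformly have the predicted sign.

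First I would write $m_j^\pm = \epsilon^\pm D^\pm$ with scalar prefactor $\epsilon^\pm = \mp(-1)^j$ and $D^\pm = \Diag(1,\ldots,1,\pm 1_j,-1,\ldots,-1)$, and check that $m_j^\pm \in T = \exp(i\mathfrak{h}_\mathbb{R})$. This is just a determinant count: $\det D^\pm$ contributes a sign depending on the number of $-1$ entries, combined with $(\epsilon^\pm)^{n+1}$, and the hypothesis that $n$ is even is precisely what forces $\det m_j^\pm = 1$ in both cases. Once $m_j^\pm = \exp(iH_1)$ for some $H_1 \in \mathfrak{h}_\mathbb{R}$, the scalar $\epsilon^\pm$ acts trivially by $\Ad$, so the eigenvalue $e^{i\alpha_{kj}(H_1)}$ of $\Ad(m_j^\pm)$ on $\mathfrak{g}_{\alpha_{kj}}$ equals the diagonal ratio $D^\pm_{kk}/D^\pm_{jj}$.

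Second, I substitute into proposition \ref{prophess}. Because $wH_0$ has the value $n$ in slot $(j,j)$ and $-1$ on every other diagonal entry, one has $\alpha_{kj}(wH_0) = -(n+1)$ for every $k \neq j$, so every $\alpha_{kj}$ with $k \neq j$ appears in the diagonalising basis of that proposition. Writing $H = \Diag(h_1,\ldots,h_{n+1})$ in the positive Weyl chamber ($h_1 > \cdots > h_{n+1}$), one has $\alpha_{kj}(H) = h_k - h_j$, positive for $k < j$ and negative for $k > j$. The diagonal entries of $\mathrm{Hess}(f_H)$ restricted to $T_{[e_j]}\Gamma(m_j^\pm \circ R_{w_0})$ therefore take the form
\begin{equation*}
2(n+1)\,(h_k - h_j)\,\frac{D^\pm_{jj}}{D^\pm_{kk}}, \qquad k \neq j.
\end{equation*}

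Third, I complete the sign bookkeeping. For $m = m_j^+$ one has $D^+_{jj}/D^+_{kk} = +1$ for $k < j$ and $-1$ for $k > j$, so multiplying by $h_k - h_j$ yields $\lvert h_k - h_j\rvert > 0$ in every case: every eigenvalue is strictly positive, proving positive definiteness. The matrix $D^-$ differs from $D^+$ only in its $j$-th entry, so the ratio $D^\pm_{jj}/D^\pm_{kk}$ reverses sign for each $k \neq j$, flipping every eigenvalue and giving negative definiteness. The only real obstacle is the first step---checking that $m_j^\pm$ genuinely lies in $T$, which is where the parity assumption on $n$ and the normalising scalar $\mp(-1)^j$ both enter; everything afterwards is routine substitution, with the crucial structural fact being that $D^\pm$ is built to interpolate between a $+$-block for $k<j$ and a $-$-block for $k>j$ in a way that cancels the sign change of $\alpha_{kj}(H)$.
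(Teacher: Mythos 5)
Your proof is correct and follows essentially the same route as the paper: apply Proposition \ref{prophess}, identify the relevant roots $\alpha_{kj}$, and match the sign of $e^{-i\alpha_{kj}(H_1)}$ (which you write as $D^\pm_{jj}/D^\pm_{kk}$, equivalent to the paper's $\varepsilon_k\varepsilon_j$ since all entries are $\pm 1$) against the sign of $\alpha_{kj}(H)=h_k-h_j$. The only cosmetic differences are that you make the constant $\alpha_{kj}(wH_0)=-(n+1)$ explicit and fold the verification that $m_j^\pm\in T$ (which the paper handles just before the proposition) into the proof body.
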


\begin{proof}
	By proposition \ref{prophess} the restriction of $\mathrm{Hess}( f_{H}) $ 
	is diagonal in the basis given by roots $\alpha
	\left( wH_{0}\right) <0$. The diagonal entries associated to the 2
	dimensional subspace corresponding to the root
	$\alpha $ are:
	\begin{equation*}
	-2\alpha \left( wH_{0}\right) \alpha \left( H\right) e^{-i\alpha \left(
		H_{1}\right) }\quad \text{and} \quad \alpha \left( wH_{0}\right) <0
	\end{equation*}%
	where $H_{1}$ is such that $m=\exp iH_{1}$.
	
	As mentioned earlier, if $wH_{0}\approx \left[ e_{j}\right] $ then
	the roots  $\alpha $ such that $\alpha \left( wH_{0}\right) <0$ are given by
	$\alpha _{kj}$ with $k\neq j$. Also, recall that from the start  $H$ was taken in the positive Weyl chamber $
		\mathfrak{h}_{\mathbb{R}}^{+}$.  Therefore, for these roots we have
	
	\begin{itemize}
		\item $\alpha _{kj}\left( H\right) >0$ if $k<j$, since $\alpha _{kj}>0$, and
		\item $\alpha _{kj}\left( H\right) <0$ if $k>j$, since $\alpha _{kj}<0$.
	\end{itemize}
Now, if $m=\exp iH_{1}=\mathrm{diag}\{\varepsilon _{1},\ldots ,\varepsilon
	_{n}\}$, then $e^{-i\alpha _{kj}\left( H_{1}\right) }=\varepsilon
	_{k}\varepsilon _{j}$. Hence,
	
	\begin{itemize}
		\item for $m_{j}^{+}=\exp iH_{1}$, we have
		\begin{equation*}
		e^{-i\alpha _{kj}\left( H_{1}\right) }=\left\{ 
		\begin{array}{ccc}
		1 & \mathrm{if} & k<j \\ 
		-1 & \mathrm{if} & k>j%
		\end{array}%
		\right. \quad \text{and}
		\end{equation*} 
\item for $m_{j}^{-}=\exp iH_{1}$, we have
		\begin{equation*}
		e^{-i\alpha _{kj}\left( H_{1}\right) }=\left\{ 
		\begin{array}{ccc}
		-1 & \mathrm{if} & k<j \\ 
		1 & \mathrm{if} & k>j.%
		\end{array}%
		\right.
		\end{equation*}
	\end{itemize}
	Combining the signs of  $e^{-i\alpha _{kj}\left( H_{1}\right) }$ and of $\alpha
	_{kj}\left( H\right) $ we see that the Hessian $\mathrm{Hess}\left( f_{H}\right) $ 
	is positive definite on $T_{\left[ e_{j}\right] }\Gamma
	( m_{j}^{+}\circ R_{w_{0}})  $ and  negative definite on $T_{
		\left[ e_{j}\right] }\Gamma( m_{j}^{-}\circ
	R_{w_{0}})  $.
\end{proof}

The goal now is to show that the imaginary part of  $f_{H}$ is 
constant on  $\Gamma( m_{j}^{\pm }\circ
R_{w_{0}}) $. These graphs intercept the zero section  $%
\Gamma( R_{w_{0}}) $ where $f_{H}$ is real. So,
we wish to show that $f_{H}$ is real on $\Gamma(
m_{j}^{\pm }\circ R_{w_{0}}) $.

For a transversal pair $\left( V,W\right) \in \mathbb{P}^{n}\times   \mathbb{P}^{n^*}$ denote by $\Phi \left( V,W\right) $ the
linear transformation in  $\mathcal{O}\left( H_{0}\right) $
corresponding to the pair. As mentioned earlier, $\Phi \left( V,W\right)
v=nv$ if $v\in V$ and $\Phi \left( V,W\right) w=-w$ then $w\in W$.
To show that $f_{H}$ is real on the graph $\Gamma ( m_{j}^{\pm
}\circ R_{w_{0}}) $ we will prove that the diagonal of $\Phi \left(
V,W\right) $ has real entries.

The following calculations work for any $m\in T$ such that $m^{2}=1$. So,
fix once and for all $m\in T$ such that $m^{2}=1$.
Take $\left[ u\right] \in \mathbb{P}^{n}$. Then $R_{w_{0}}\left[ u%
\right] =\left[ u\right] ^{\bot }$ and $\Phi \left( [ u] ,\left[ u%
\right] ^{\bot }\right) $ is a Hermitian matrix whose diagonal entries are
real. Since, $m\circ R_{w_{0}}\left[ u\right] =m\left[ u\right] ^{\bot }$,
we ought to show that $\Phi \left( \left[ u\right] ,m\left[ u\right]
^{\bot }\right) $ has real diagonal entries. Here assume that  $%
\left[ u\right] $ and $m\left[ u\right] ^{\bot }$ are transversal, that is
$\left( u,mu\right) \neq 0$.

Suppose $|u|=1$ and take an orthonormal basis $\{v_{1},\ldots ,v_n\}$ of $%
\left[ u\right] ^{\bot }$ with respect to the canonical  Hermitian form
$\left( \cdot ,\cdot \right) $ of $\mathbb{C}^{n+1}$. The basis $%
\{u,v_{1},\ldots ,v_n\}$ is orthonormal in $\mathbb{C}^{n+1}$ and since $%
m\in \mathrm{SU}\left( n\right) $ the basis $\beta =\{mu,mv_{1},\ldots
,mv_n\}$  is orthonormal as well, whereas the basis $\gamma
=\{u,mv_{1},\ldots ,mv_n\}$ is not orthonormal. By the definition of 
$\Phi $ the matrix of  $\Phi \left( \left[ u\right] ,m\left[ u\right]
^{\bot }\right) $ on the basis $\gamma $ is given by 
\begin{equation*}
\left[ \Phi \left( \left[ u\right] ,m\left[ u\right] ^{\bot }\right) \right]
_{\gamma }=\mathrm{diag}\{n,-1,\ldots ,-1\}.
\end{equation*}%
The matrices for the change of basis between $\beta $ and $\gamma $ are
\begin{equation*}
\left[ I\right] _{\beta }^{\gamma }=\left( 
\begin{array}{cccc}
\left( u,mu\right) & 0 & \cdots & 0 \\ 
\left( u,mv_{1}\right) & 1 & \ddots & \vdots \\ 
\vdots & \vdots & \ddots & 0 \\ 
\left( u,mv_n\right) & 0 & \cdots & 1%
\end{array}%
\right)
\end{equation*}
with inverse 
\begin{equation*}
\left[ I\right] _{\gamma }^{\beta }=\left( 
\begin{array}{cccc}
1/\left( u,mu\right) & 0 & \cdots & 0 \\ 
-\left( u,mv_{1}\right) /\left( u,mu\right) & 1 & \ddots & \vdots \\ 
\vdots & \vdots & \ddots & 0 \\ 
-\left( u,mv_n\right) /\left( u,mu\right) & 0 & \cdots & 1%
\end{array}%
\right) .
\end{equation*}%

Therefore, $\left[ \Phi \left( \left[ u\right] ,m\left[ u\right] ^{\bot
}\right) \right] _{\beta }=\left[ I\right] _{\beta }^{\gamma }\left[ \Phi
\left( \left[ u\right] ,m\left[ u\right] ^{\bot }\right) \right] _{\gamma }%
\left[ I\right] _{\gamma }^{\beta }$ is given by 
\begin{equation*}
\left[ \Phi \left( \left[ u\right] ,m\left[ u\right] ^{\bot }\right) \right]
_{\beta }=\left( 
\begin{array}{cccc}
n & 0 & \cdots & 0 \\ 
\nu \left( u,mv_{1}\right) & -1 & \ddots & 
\vdots \\ 
\vdots & \vdots & \ddots & 0 \\ 
\nu \left( u,mv_n\right) & 0 & \cdots
& -1%
\end{array}%
\right) ,
\end{equation*}
 where we have set $\nu\ce (n+1)/\left( u,mu\right)$.

We now claim that the diagonal elements  of $\Phi \left( \left[ u%
\right] ,m\left[ u\right] ^{\bot }\right) $ are  given by $\left( \Phi
\left( \left[ u\right] ,m\left[ u\right] ^{\bot }\right) e_{j},e_{j}\right) $
where $e_{j}$ is an element of the canonical basis. To see this,
 take an arbitrary $x\in 
\mathbb{C}^{n+1}$ and write $\left[ \Phi \left( \left[ u\right]
,m\left[ u\right] ^{\bot }\right) \right] _{\beta }=A+B$ with 
\begin{equation*}
A=\left( 
\begin{array}{cccc}
n & 0 & \cdots & 0 \\ 
\nu \left( u,mv_{1}\right)  & 0 & \ddots & 
\vdots \\ 
\vdots & \vdots & \ddots & 0 \\ 
\nu \left( u,mv_n\right)  & 0 & \cdots
& 0%
\end{array}%
\right)
\end{equation*}%
and
\begin{equation*}
B=\left( 
\begin{array}{cccc}
0 & 0 & \cdots & 0 \\ 
0 & -1 & \ddots & \vdots \\ 
\vdots & \vdots & \ddots & 0 \\ 
0 & 0 & \cdots & -1%
\end{array}%
\right) .
\end{equation*}%
In coordinates 
$x=\left(
x,mu\right) mu+\left( x,mv_{1}\right) mv_{1}+\cdots +\left(
x,mv_n\right) mv_n$. So, 
$$
\left[ Ax\right] _{\beta }=\left( 
\begin{array}{cccc}
n & 0 & \cdots & 0 \\ 
\nu \left( u,mv_{1}\right)& 0 & \ddots & 
\vdots \\ 
\vdots & \vdots & \ddots & 0 \\ 
\nu \left( u,mv_n\right)
 & 0 & \cdots
& 0%
\end{array}%
\right) \left( 
\begin{array}{c}
\left( x,mu\right) \\ 
\left( x,mv_{1}\right) \\ 
\vdots \\ 
\left( x,mv_n\right)%
\end{array}%
\right) =
\left( 
\begin{array}{c}
n\left( x,mu\right) \\ 
\nu \left( u,mv_{1}\right) \left( x,mu\right)  \\ 
\vdots \\ 
\nu \left( u,mv_n\right) \left( x,mu\right) 
\end{array}%
\right)
$$
and%
\begin{equation*}
\left[ Bx\right] _{\beta }=\left( 
\begin{array}{cccc}
0 & 0 & \cdots & 0 \\ 
0 & -1 & \ddots & \vdots \\ 
\vdots & \vdots & \ddots & 0 \\ 
0 & 0 & \cdots & -1%
\end{array}%
\right) \left( 
\begin{array}{c}
\left( x,mu\right) \\ 
\left( x,mv_{1}\right) \\ 
\vdots \\ 
\left( x,mv_n\right)%
\end{array}%
\right) =\left( 
\begin{array}{c}
0 \\ 
-\left( x,mv_{1}\right) \\ 
\vdots \\ 
-\left( x,mv_n\right)%
\end{array}%
\right) .
\end{equation*}%
Since  $\beta $ is an orthonormal basis, we have that $\left( \Phi \left( \left[ u\right] ,m%
\left[ u\right] ^{\bot }\right) x,x\right) $ is given by the sum of

\begin{itemize}
	\item $\left( Ax,x\right) =n|\left( x,mu\right) |^{2}+\frac{
		n }{\left( u,mu\right) }\left( x,mu\right) \sum_{j=1}^{n}\left(
	u,mv_{j}\right) \overline{\left( x,mv_{j}\right) }$ and
	
	\item $\left( Bx,x\right) =-\sum_{j=1}^{n}|\left( x,mv_{j}\right) |^{2}$.
\end{itemize}
In this sum, the only part that is not evidently real is 
\begin{equation*}
\frac{n}{\left( u,mu\right) }\left( x,mu\right)
\sum_{j=1}^{n}\left( u,mv_{j}\right) \overline{\left( x,mv_{j}\right) }.
\end{equation*}%
To analyze this part, start up observing that $\left(
u,mu\right) \in \mathbb{R}$ since $m$ is an isometry of $\left( \cdot
,\cdot \right) $. Thus, $\left( u,mu\right) =\left( mu,u\right) =%
\overline{\left( u,mu\right) }$. So, we ought to verify that
\begin{equation}
\left( x,mu\right) \sum_{j=1}^{n}\left( u,mv_{j}\right) \overline{\left(
	x,mv_{j}\right) }\in \mathbb{R}  \label{forsomdevesereal}
\end{equation}%
whenever $x$ is an element of the canonical basis. This works specifically for an element $x$
of the canonical basis, because in this case  $mx=\pm x$, that is, $x$
belongs to an eigenspace of  $m$.
The sum in  (\ref{forsomdevesereal}) can be rewritten as $%
\sum_{j=1}^{n}\left( mu,v_{j}\right) \overline{\left( mx,v_{j}\right) }$,
because  $m\in \mathrm{SU}( n) $.  
 It can also be expressed as 
\begin{equation*}
\sum_{j=1}^{n}\left( mu,\left( mx,v_{j}\right) v_{j}\right) =\left(
mu,\sum_{j=1}^{n}\left( mx,v_{j}\right) v_{j}\right) .
\end{equation*}%
The second factor is the orthogonal projection $\mathrm{proj}\left(
mx\right) $ of  $mx$ over $\left[ v_{1},\ldots ,v_n\right] =\left[ u%
\right] ^{\bot }$. Since $\left( mu,\mathrm{proj}\left( mx\right) \right)
=\left( \mathrm{proj}\left( mu\right) ,mx\right) $ it then follows that the sum in  (\ref%
{forsomdevesereal}) is
\begin{equation*}
\left( mx,u\right) \left( \sum_{j=1}^{n}\left( mu,v_{j}\right)
v_{j},mx\right) .
\end{equation*}%
Finally, since $mx=\pm x$ (as it happens for the elements of the canonical basis) the previous expression becomes
\begin{equation}
\left( x,u\right) \left( \sum_{j=1}^{n}\left( mu,v_{j}\right)
v_{j},x\right) .  \label{forsomdevesereal1}
\end{equation}%
We can now prove this expression is  real.

\begin{lemma}
	Expression  (\ref{forsomdevesereal1}) is real.
\end{lemma}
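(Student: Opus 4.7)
The plan is to reduce the sum $\sum_{j=1}^n(mu,v_j)v_j$ to an explicit vector using the orthogonal projection formula, then exploit the two special features of the setup: that $m$ is a self-adjoint unitary and that $x$ is an eigenvector of $m$ with eigenvalue $\pm 1$.

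First I would observe that, since $\{v_1,\dots,v_n\}$ is an orthonormal basis of $[u]^\perp$ and $\{u,v_1,\dots,v_n\}$ is orthonormal in $\mathbb{C}^{n+1}$, the sum $\sum_{j=1}^n(mu,v_j)v_j$ is precisely the orthogonal projection of $mu$ onto $[u]^\perp$, and hence equals $mu-(mu,u)u$ (using $|u|=1$). Substituting this into \eqref{forsomdevesereal1} and using linearity in the first slot gives
\begin{equation*}
(x,u)\!\left(\sum_{j=1}^n(mu,v_j)v_j,\,x\right)=(x,u)(mu,x)-(mu,u)(x,u)(u,x).
\end{equation*}

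Next I would handle the two summands separately. For the second summand, note that $(x,u)(u,x)=|(x,u)|^2\in\mathbb{R}$, and since $m\in T$ satisfies $m^2=1$ it is both unitary and involutive, hence self-adjoint; therefore $(mu,u)=(u,mu)=\overline{(mu,u)}\in\mathbb{R}$, so the whole second summand is real. For the first summand, this is where the hypothesis $mx=\pm x$ (valid because $x$ is in the canonical basis and hence an eigenvector of the diagonal involution $m$) enters decisively: we compute $(mu,x)=(u,m^{\ast}x)=(u,mx)=\pm(u,x)=\pm\overline{(x,u)}$, so $(x,u)(mu,x)=\pm|(x,u)|^2\in\mathbb{R}$.

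The argument is almost entirely formal, so there is no real obstacle; the only thing to keep straight is the Hermitian convention (which slot is conjugate linear) when invoking self-adjointness of $m$ and when converting $(mu,x)$ into a multiple of $\overline{(x,u)}$. Once those conventions are fixed consistently with the paper's earlier computation that $(u,mu)\in\mathbb{R}$, both summands are manifestly real and the lemma follows, completing the verification that $f_H$ takes real values on $\Gamma(m_j^{\pm}\circ R_{w_0})$, which in turn is exactly what is needed to apply Theorem \ref{stable/unstable} and produce the real Lagrangian thimbles inside these graphs.
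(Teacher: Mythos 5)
Your proof is correct, and it is cleaner than the paper's. The paper decomposes $u=u^{+}+u^{-}$ into its $\pm 1$-eigencomponents for $m$, rewrites $(mu,v_j)$ as $2(u^{+},v_j)$, and only near the end substitutes $\sum_j (x,v_j)v_j = x-(x,u)u$; the reality of the final expression is then read off from $|(x,u^{+})|^{2}$ and $(u^{+},u^{+})$. You instead apply the projection identity $\sum_j (mu,v_j)v_j = mu-(mu,u)u$ at the outset, which collapses the whole expression to $(x,u)(mu,x)-(mu,u)(x,u)(u,x)$; reality then follows immediately from two facts you correctly identify as the genuine content of the hypotheses, namely that $m^{2}=1$ together with unitarity makes $m$ self-adjoint (so $(mu,u)\in\mathbb{R}$ and $(mu,x)=(u,mx)$), and that $x$ is an eigenvector of $m$ (so $(u,mx)=\pm\overline{(x,u)}$). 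One can check the two answers agree: with $x\in E_{+}$ your expression $|(x,u)|^{2}\bigl(1-(mu,u)\bigr)$ equals the paper's $2|(x,u^{+})|^{2}\bigl(1-(u^{+},u^{+})\bigr)$ using $(x,u)=(x,u^{+})$, $(mu,u)=(u^{+},u^{+})-(u^{-},u^{-})$, and $(u^{+},u^{+})+(u^{-},u^{-})=1$. Your route buys a shorter, case-free computation that makes clearer exactly where the hypotheses $m^{2}=1$ and $mx=\pm x$ are used, at no cost in generality.
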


\begin{proof}
	Let $E_{\pm }$ be the eigenspaces associated to the eigenvalues $\pm 1$ of $%
	m$ (since $m^{2}=1$), and write  $u=u^{+}+u^{-}\in E_{+}\oplus E_{-}$ (that is,
	$u^{+}=1/2\left( u+mu\right) $ and $u^{+}=1/2\left( u-mu\right) $. Then,
	for each index $j$, $0=\left( u,v_{j}\right) =\left(
	u^{+},v_{j}\right) +\left( u^{-},v_{j}\right) $, that is, 
	\begin{equation*}
	\left( u^{-},v_{j}\right) =-\left( u^{+},v_{j}\right) .
	\end{equation*}%
	It follows that $\left( mu,v_{j}\right) =\left( u^{+}-u^{-},v_{j}\right) =2\left(
	u^{+},v_{j}\right) =-2\left( u^{-},v_{j}\right) $. Suppose, for example, that 
	$x\in E_{+}$. Then, $\left( x,u\right) =\left( x,u^{+}\right) $, since $%
	E_{+}$ is orthogonal to  $E_{-}$, given that $m$ is unitary. Thus
	(\ref{forsomdevesereal1}) can be rewritten as
	\begin{eqnarray*}
		\sum_{j=1}^{n}\left( x,u\right) \left( mu,v_{j}\right) \left(
		v_{j},x\right) &=&2\sum_{j=1}^{n}\left( x,u^{+}\right) \left(
		u^{+},v_{j}\right) \left( v_{j},x\right) \\
		&=&2\sum_{j=1}^{n}\left( x,u^{+}\right) \left( u^{+},\left( x,v_{j}\right)
		v_{j}\right) \\
		&=&2\left( x,u^{+}\right) \sum_{j=1}^{n}\left( u^{+},\left( x,v_{j}\right)
		v_{j}\right) \\
		&=&2\left( x,u^{+}\right) \left( u^{+},\sum_{j=1}^{n}\left( x,v_{j}\right)
		v_{j}\right) .
	\end{eqnarray*}%
	The sum inside the Hermitian form is
	\begin{equation*}
	\sum_{j=1}^{n}\left( x,v_{j}\right) v_{j}=\left( x,u\right)
	u+\sum_{j=1}^{n}\left( x,v_{j}\right) v_{j}-\left( x,u\right) u=x-\left(
	x,u\right) u
	\end{equation*}%
	since  $\{u,v_{1},\ldots ,v_n\}$ is an orthonormal basis. So, the last
	term is given by 
	$$
	2\left( x,u^{+}\right) \left( u^{+},x\right) -2\left( x,u^{+}\right) \left(
	u^{+},\left( x,u\right) u\right) =$$
	$$2\left( x,u^{+}\right) \left(
	u^{+},x\right) -2\left( x,u^{+}\right) \left( u,x\right) \left(
	u^{+},u\right) .
	$$
	To see that this is real observe that the first term of the right hand side is $|\left(
	x,u^{+}\right) |^{2}$. As for the second term, $\left( u,x\right) =\left(
	u^{+},x\right) $ is $\left( u^{+},u\right) =\left( u^{+},u^{+}\right) $,
	so, the second term  is  $2|\left( x,u^{+}\right) |^{2}\left(
	u^{+},u^{+}\right) $ which is also real.
\end{proof}

Summing up, we have obtained:

\begin{proposition}
	If  $m\in T$ with $m^{2}=1$ and \ $V\in \mathbb{P}^{n}$ is such that
	$V$ does not belong to  $m\circ R_{w_{0}}\left( V\right) =mV^{\bot }$ then
	the matrix of  $\Phi \left( V,m\circ R_{w_{0}}V\right) $ 
	has real diagonal entries (in the canonical basis).
\end{proposition}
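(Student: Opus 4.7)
The plan is to reduce the claim to checking reality of each quantity $\bigl(\Phi(V, mV^\perp)\,e_j, e_j\bigr)$ for $j=1,\ldots,n+1$, since the canonical basis is orthonormal for the Hermitian form $(\cdot,\cdot)$ and hence the diagonal entries of a matrix are exactly these numbers. I would begin by choosing a unit representative $u$ for $V$ and completing it to an orthonormal basis $\{u, v_1, \ldots, v_n\}$ of $\mathbb{C}^{n+1}$, so that $V^\perp = \mathrm{span}\{v_1,\ldots,v_n\}$. Since $m\in\mathrm{SU}(n+1)$ preserves $(\cdot,\cdot)$, the set $\beta = \{mu, mv_1, \ldots, mv_n\}$ is again orthonormal and adapted to $mV^\perp$.

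Next I would carry out the bookkeeping already suggested by the calculation preceding the proposition: in the (non-orthonormal) basis $\gamma = \{u, mv_1, \ldots, mv_n\}$, the operator $\Phi(V, mV^\perp)$ is diagonal with entries $n, -1, \ldots, -1$ by construction, since $\Phi$ has eigenvalue $n$ on $V$ and $-1$ on $mV^\perp$. Conjugating by the triangular change of basis $[I]_\beta^\gamma$, which alters only the first column by inserting the numbers $(u, mu)$ and $(u, mv_j)$, one obtains an explicit matrix for $\Phi$ in the orthonormal basis $\beta$. Evaluating $\bigl(\Phi x, x\bigr)$ for an arbitrary $x$, all contributions are visibly real except a single term of the shape
\[
\frac{n+1}{(u, mu)}\,(x, mu)\sum_{j=1}^{n}(u, mv_j)\,\overline{(x, mv_j)},
\]
where I use that $(u, mu)\in\mathbb{R}$ because $m$ is a unitary involution.

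The main obstacle, and the only place where the hypothesis $m^2=1$ is truly used, is showing that this remaining sum is real when $x=e_\ell$. The crucial input is that $m\in T$ is a diagonal involution, so $me_\ell = \varepsilon_\ell e_\ell$ with $\varepsilon_\ell \in \{\pm 1\}$, meaning $e_\ell$ lies in one of the two eigenspaces $E_\pm$ of $m$. I would then decompose $u = u^+ + u^-$ with $u^\pm\in E_\pm$ and exploit that $E_+ \perp E_-$ (because $m$ is unitary and selfadjoint). Rewriting $\sum_j (u, mv_j)\,\overline{(x, mv_j)}$ as the inner product of $mu$ with the orthogonal projection of $mx$ onto $V^\perp$, substituting $x=\pm mx$, and using the identity $(u^-, v_j) = -(u^+, v_j)$ derived from $(u, v_j) = 0$, one collapses the sum to a real combination of $|(e_\ell, u^+)|^2$ and $(u^+, u^+)$, exactly as in the lemma just proved. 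Summing the contributions gives $\bigl(\Phi(V, mV^\perp)\,e_\ell, e_\ell\bigr)\in\mathbb{R}$, establishing the proposition.
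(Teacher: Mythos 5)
Your proposal is correct and follows essentially the same route the paper takes: the same pair of bases $\beta$ and $\gamma$, the same conjugation by a triangular change of basis, the same isolation of the single non‑obviously‑real term involving $(x,mu)\sum_j (u,mv_j)\overline{(x,mv_j)}$, and the same use of $m^2=1$ via the eigenspace decomposition $u = u^+ + u^-$ together with the lemma preceding the proposition. (You even silently correct a small typo in the paper, which writes $\tfrac{n}{(u,mu)}$ where the coefficient should be $\tfrac{n+1}{(u,mu)}$.)
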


In conclusion:

\begin{theorem}\label{Morse}
	Let $m_{j}^{\pm }$ be as in definition \ref{defememaismenos}. Then,
	the stable and unstable manifolds of  $\grad \left( 
	\text{Re}f_{H}\right) $ at the critical point  $\left[ e_{j}\right] $ are
	open in  the graph $\Gamma( m_{j}^{\pm }\circ R_{w_{0}}) $. The 
	real  Lagrangian thimbles are closed balls contained in  the graph $\Gamma
	( m_{j}^{\pm }\circ R_{w_{0}}) $.
\end{theorem}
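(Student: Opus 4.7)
The plan is to verify the three hypotheses of Proposition \ref{proplagrandest} and Theorem \ref{stable/unstable} with $V = \Gamma(m_j^\pm \circ R_{w_0})$, $f = f_H$, and $x = [e_j]$, then invoke those two results directly. The hypotheses are: (i) $V$ is a Lagrangian submanifold containing $[e_j]$; (ii) $\mathrm{Im}\, f_H$ is constant on $V$; and (iii) the restriction $\mathrm{Hess}(f_H)|_{T_{[e_j]} V}$ is sign-definite.

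For (i), $R_{w_0}$ is anti-symplectic (recalled before Remark \ref{Rwo}) and $m_j^\pm$ lies in the maximal torus $T$, so $m_j^\pm \circ R_{w_0}$ is anti-symplectic; its graph is Lagrangian in $\mathbb{F}_{H_0} \times \mathbb{F}_{H_0^\ast}$ and hence, via the identification with the open orbit, in $\mathcal O(H_0)$. The passage of $\Gamma(m_j^\pm \circ R_{w_0})$ through $[e_j]$ was established at the opening of section \ref{PG}. For (ii), inspection of Definition \ref{defememaismenos} shows each $m_j^\pm$ has all diagonal entries in $\{\pm 1\}$, hence $(m_j^\pm)^2 = 1$; the proposition just preceding the theorem then guarantees that $\Phi(V, m_j^\pm \circ R_{w_0} V)$ has real diagonal entries in the canonical basis, and since $f_H(x) = \langle H, x\rangle = \mathrm{tr}(Hx)$ with $H$ diagonal is a real linear combination of these entries, $f_H$ is real-valued on the graph, so $\mathrm{Im}\, f_H$ vanishes identically there. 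For (iii), the preceding proposition diagonalizes $\mathrm{Hess}(f_H)|_{T_{[e_j]} \Gamma(m \circ R_{w_0})}$ with eigenvalues $-2\alpha(wH_0)\alpha(H)e^{-i\alpha(H_1)}$ indexed by $\alpha$ with $\alpha(wH_0) < 0$, and the sign analysis there yields positive definiteness for $m = m_j^+$ and negative definiteness for $m = m_j^-$.

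With (i)--(iii) in hand, Proposition \ref{proplagrandest} delivers the first assertion of Theorem \ref{Morse}: the stable manifold of $\grad(\mathrm{Re}\, f_H)$ at $[e_j]$ is open in $\Gamma(m_j^- \circ R_{w_0})$ (negative definite case) and the unstable manifold is open in $\Gamma(m_j^+ \circ R_{w_0})$ (positive definite case). Theorem \ref{stable/unstable} then delivers the second assertion: the real Lagrangian thimbles are closed balls contained in these graphs. The heavy lifting is already done by the two preceding propositions, namely the lengthy Hermitian computation establishing reality of $f_H$ on the graph and the explicit diagonalization of the Hessian; the main obstacle in the proof of Theorem \ref{Morse} itself is therefore reduced to matching signs, specifically confirming that the $+$ branch in Definition \ref{defememaismenos} corresponds to the unstable direction and the $-$ branch to the stable direction under the gradient flow convention used in Proposition \ref{proplagrandest}.
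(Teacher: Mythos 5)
Your proposal is correct and follows essentially the same route as the paper: Theorem \ref{Morse} has no standalone proof in the text but is stated as the direct consequence of the immediately preceding propositions (reality of $f_H$ on $\Gamma(m_j^\pm \circ R_{w_0})$ and definiteness of the restricted Hessian), plugged into Proposition \ref{proplagrandest} and Theorem \ref{stable/unstable} with $V = \Gamma(m_j^\pm \circ R_{w_0})$ and $x = [e_j]$. Your sign-matching at the end (the $-$ branch gives the stable manifold, the $+$ branch the unstable one) is the correct reading of Proposition \ref{proplagrandest}'s convention, and your check that $(m_j^\pm)^2 = 1$ supplies exactly the hypothesis the reality proposition needs.
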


\section{Acknowledgements}  

Gasparim was partially supported by a Simons Associateship Grant of the 
Abdus Salam International Centre for Theoretical Physics and by the 
Vice-Rector\'\i a de Investigaci\'on y Desarrollo Tecnol\'ogico 
of Universidad Cat\'olica del Norte, Chile.
San Martin was partially supported by CNPq grant no.\ 303755/09-1 and Fapesp grant no.\ 2018/13481.
We thank F. Valencia for pointing out a few corrections  needed on an early version of the text.

\end{document}